\documentclass[a4paper,12pt,reqno]{amsart}

\usepackage{amsmath,amsfonts,amsthm,amssymb,color}
\usepackage[latin1]{inputenc}
\usepackage{xy} 
\usepackage{pdfsync}
\usepackage{hyperref}


 \topmargin 0.0in  \headsep 0.4in  \textheight 9.0in
  \oddsidemargin 0.02in  \evensidemargin 0.05in
\textwidth 6.2in

\newcommand{\cacha}{\Hat{\mathcal{C}}}

\newcommand{\norm}[1]{\lVert #1\rVert}

\newcommand{\ka}{\kappa}

\newcommand{\iot}{\int_{0}^{t}}

\newcommand{\ott}{[0,T]}

\DeclareMathOperator{\id}{\text{Id}}

\DeclareMathOperator{\Id}{Id}

\makeatletter
\newcommand{\eqcolon}{\mathrel{\mathord{=}\raise.2\p@\hbox{:}}}
\newcommand{\coloneq}{\mathrel{\raise.2\p@\hbox{:}\mathord{=}}}
\makeatother





\newcommand{\D}{\mathbb D}
\newcommand{\R}{\mathbb R}
\newcommand{\N}{\mathbb N}

\newcommand{\be}{\mathbf{E}}

\newcommand{\bp}{\mathbf{P}}

\newcommand{\cb}{\mathcal B}
\newcommand{\cac}{\mathcal C}

\newcommand{\cd}{\mathcal D}
\newcommand{\ce}{\mathcal E}
\newcommand{\cf}{\mathcal F}

\newcommand{\ch}{\mathcal H}

\newcommand{\ck}{\mathcal K}
\newcommand{\cl}{\mathcal L}
\newcommand{\cn}{\mathcal N}

\newcommand{\cs}{\mathcal S}

\newcommand{\crr}{\mathcal R}

\newcommand{\al}{\alpha}
\newcommand{\ep}{\varepsilon}

\newcommand{\ga}{\gamma}
\newcommand{\la}{\lambda}

\newcommand{\om}{\omega}
\newcommand{\oom}{\Omega}

\newcommand{\vp}{\varphi}


\newcommand{\lp}{\left(}
\newcommand{\rp}{\right)}
\newcommand{\lc}{\left[}
\newcommand{\rc}{\right]}
\newcommand{\lcl}{\left\{}
\newcommand{\rcl}{\right\}}
\newcommand{\lln}{\left|}
\newcommand{\rrn}{\right|}
\newcommand{\lla}{\left\langle}
\newcommand{\rra}{\right\rangle}

\newcommand{\bean}{\begin{eqnarray*}}
\newcommand{\eean}{\end{eqnarray*}}
\newcommand{\ben}{\begin{enumerate}}
\newcommand{\een}{\end{enumerate}}
\newcommand{\beq}{\begin{equation}}
\newcommand{\eeq}{\end{equation}}

\newtheorem{theorem}{Theorem}[section]

\newtheorem{corollary}[theorem]{Corollary}

\newtheorem{definition}[theorem]{Definition}

\newtheorem{hypothesis}{Hypothesis}
\newtheorem{lemma}[theorem]{Lemma}

\newtheorem{proposition}[theorem]{Proposition}

\theoremstyle{remark}
\newtheorem{remark}[theorem]{Remark}

\begin{document}

\title{Malliavin calculus for fractional heat equation}
\author{A. Deya \and S. Tindel}
\date{\today}
\begin{abstract}
In this article, we give some existence and smoothness results for the law of the solution to a stochastic heat equation driven by a finite dimensional fractional Brownian motion with Hurst parameter $H>1/2$. Our results rely on recent tools of Young integration for convolutional integrals combined with stochastic analysis methods for the study of laws of random variables defined on a Wiener space.
\end{abstract}

\address{Aur{\'e}lien Deya, Institut {\'E}lie Cartan Nancy, Universit\'e de Nancy 1, B.P. 239,
54506 Vand{\oe}uvre-l{\`e}s-Nancy Cedex, France}
\email{deya@iecn.u-nancy.fr}

\address{Samy Tindel, Institut {\'E}lie Cartan Nancy, Universit\'e de Nancy 1, B.P. 239,
54506 Vand{\oe}uvre-l{\`e}s-Nancy Cedex, France}
\email{tindel@iecn.u-nancy.fr}

\thanks{S. Tindel is partially supported by the (French) ANR grant ECRU}

\subjclass[2000]{Primary 60H35; Secondary 60H07, 60H10, 65C30}
\date{\today}
\keywords{fractional Brownian motion, heat equation, Malliavin calculus}

\maketitle

\begin{center}
 \emph{Dedicated to David Nualart on occasion of his 60th birthday}
\end{center}

\section{Introduction}

The definition and resolution of evolution type PDEs driven by general H\"older continuous signals have experienced tremendous progresses during the last past years. When the H\"older regularity of the driving noise is larger than $1/2$, this has been achieved thanks to Young integrals \cite{GLT} or fractional integration \cite{Maslo-Nualart} techniques. The more delicate issue of a H\"older exponent smaller than $1/2$ has to be handled thanks to rough paths techniques, either by smart transformations allowing to use limiting arguments \cite{car-friz,car-friz-ober,friz-ober-1} or by an adaptation of the rough paths formalism to evolution equations \cite{RHE-glo, RHE, GT}. Altogether, those contributions yield a reasonable definition of rough parabolic PDEs, driven at least by a finite dimensional signal.

\smallskip

With those first results in hand, a natural concern is to get a better understanding of the processes obtained as solutions to stochastic PDEs driven by rough signals. This important program includes convergence of numerical schemes (see \cite{NSRHE} for a result in this direction), ergodic properties and a thorough study of the law of those processes. The current article makes a first step towards the last of these items.

\smallskip

Indeed, we shall consider here a simple case of rough evolution PDE and see what kind of result might be obtained as far as densities of the solution are concerned. More specifically, we focus on the following mild heat equation on $(0,1)$
\begin{equation}\label{eq-gene-fbm}
Y_t=S_t \vp +\int_0^t S_{t-u}(F_i(Y)_u) \, dB^i_u \, , \quad   \ t\in [0,T],
\end{equation}
where $T>0$ is a finite horizon, $S_t$ stands for the heat semi-group associated with Dirichlet boundary conditions, $\vp$ is a smooth enough initial condition, $F_i: L^2(0,1) \to L^2(0,1)$ and $B: [0,T] \to \R^d$ is a $d$-dimensional fractional Brownian motion with Hurst parameter $H >1/2$. For this equation, we obtain the following results:
\begin{enumerate}
\item 
Existence of a density for the random variable $Y_t(\xi)$ for any $t\in(0,T]$ and $\xi\in(0,1)$, when the $F_i$'s are rather general Nemytskii operators $F_i(\vp)(\xi):=f_i(\vp(\xi))$. See Theorem \ref{theo-exi} for a precise statement.
\item
When the $F_i$'s are defined through some regularizing kernel (see Hypothesis \ref{hyp:regularized-young-eq}), we obtain that the density of $Y_t(\xi)$ is smooth. This will be the content of Theorem~\ref{thm:regu-density}.
\end{enumerate}
To the best of our knowledge, these are the first density results for solutions to nonlinear PDEs driven by fractional Brownian motion. Let us point out that we could have obtained the same kind of results for a more general class of equations (operator under divergence form, general domain $D\subset\R^n$, drift term, Gaussian process as driving noise). We prefer however to stick to the simple case of the fBm-driven stochastic heat equation for the sake of readability and conciseness.

\smallskip

Our main results will obviously be based on a combination of pathwise estimates for integrals driven by rough signals and Malliavin calculus tools. In particular, a major part of our effort will be dedicated to the differentiation of the solution to equation~\eqref{eq-gene-fbm} with respect to the driving noise $B$ and to a proper estimate of the derivative. Since the equations for derivatives are always of linear type they lead to exponential type estimates, which are always a delicate issue. This is where we shall consider some regularizing vector fields $F_i$ in  \eqref{eq-gene-fbm}, and proceed to a careful estimation procedure (see Section \ref{sec:estim-solution}). It should also be noticed at this point that the basis of our stochastic analysis tools is contained in the celebrated book \cite{Nua} by D. Nualart, plus the classical reference \cite{nua-sau} as far as equations driven by fBm are concerned.

\smallskip

Here is how our article is structured: Section \ref{sec:setting} is devoted to recall basic facts on both pathwise noisy evolution equations and Malliavin calculus for fractional Brownian motion. We differentiate the solution to \eqref{eq-gene-fbm} and obtain the existence of the density at Section \ref{sec:existence-density}. Finally, further estimates on the Malliavin derivative and smoothness of the density are derived at Section \ref{sec:smoothness-density}.

\

\noindent
\textbf{Notation}. Throughout the paper, we will use the generic notation $c$ to refer to the constants that only depend on non-significant parameters. The constants which are to play a more specific role in our reasoning will be labelled $c_1,c_2,$...

\smallskip

For any $k\in \N$, we will denote by $\cac^{k,\textbf{b}}(\R;\R)$ the space of functions on $\R$ which are $k$-times differentiable with bounded derivatives. For any $\ga\in (0,1)$, $\cac^\ga=\cac^\ga([0,T];\R^d)$ will stand for the set of ($d$-dimensional) $\ga$-Hölder paths on $[0,T]$.

\section{Setting}
\label{sec:setting}

One of the technical advantages of dealing with the simple case of a stochastic heat equation on $(0,1)$ is a simplification in the functional analysis setting, based on rather elementary Fourier series considerations (notice in particular that the $L^p$ considerations of \cite{RHE} can be avoided). We shall first detail this setting, and then recall some basic facts on equations driven by noisy signals and fractional Brownian motion. Throughout the section, we assume that a (finite) horizon $T$ has been fixed for the equation.

\subsection{Fractional Sobolev spaces}
As mentioned above, we are working here with the heat equation in the Hilbert space $\cb:=L^2(0,1)$ with Dirichlet boundary conditions. The Laplace operator $\Delta$ on $\cb$ can be diagonalized in the orthonormal basis 
$$ e_n(\xi):=\sqrt{2} \sin(\pi n \xi)\ \ (n\in \N^\ast), \quad \text{with eigenvalues} \ \  \la_n:= \pi^2n^2.$$
We shall denote by $(y^n)_n$ the (Fourier) decomposition of any function $y\in\cb$ on this orthonormal basis.

\smallskip

Sobolev spaces based on $\cb$ are then easily characterized by means of Fourier coefficients. We label their definition for further use:

\begin{definition}\label{defi:sobo-frac-tore}
For any $\al \geq 0$, we denote by $\cb_\al$ the fractional Sobolev space of order $\al$ based on $\cb$, defined by
\begin{equation}
\cb_\al :=\lcl y \in L^2(0,1): \ \sum_{n=1}^\infty \la_n^{2\al} (y^n)^2 < \infty \rcl.
\end{equation}
This space is equipped with its natural norm
$\norm{y}_{\cb_\al}^2:=\norm{\Delta^\al y}_\cb^2=\sum_{n=1}^\infty \la_n^{2\al}(y^n)^2$.
We also set $\cb_{\infty}=\cac(0,1)$.
\end{definition}

The above-defined fractional Sobolev spaces enjoy the following classical properties (see \cite{Adams,run-sick}):

\begin{proposition}
Let $\cb_\al,\cb_{\infty}$ be the Sobolev spaces introduced at Definition \ref{defi:sobo-frac-tore}. Then the following hold true:

\smallskip

\noindent $\bullet$
\emph{Sobolev inclusions}: If $\al >1/4$, then we have the continuous embedding
\begin{equation}\label{sobol-incl}
\cb_{\al}\subset \cb_\infty.
\end{equation}

\smallskip

\noindent $\bullet$ \emph{Algebra}: If $\al  > 1/4$, then $\cb_{\al}$ is a Banach algebra with respect to pointwise multiplication, or in other words 
\begin{equation}\label{algebra}
\norm{\varphi \cdot \psi}_{\cb_{\al}} \leq \norm{\varphi}_{\cb_{\al}} \norm{\psi}_{\cb_{\al}}.
\end{equation}

\smallskip

\noindent $\bullet$ \emph{Composition}: If $0\leq \al <1/2$, $\vp\in \cb_\al$ and $f:\R \to \R$ belongs to $\cac^{1,\textbf{b}}$, then $f(\vp) \in \cb_\al$ and
\begin{equation}\label{compos}
\norm{f(\vp)}_{\cb_\al} \leq c_f \lcl 1+\norm{\vp}_{\cb_{\al}} \rcl.
\end{equation}
Here, $f(\vp)$ is naturally understood as $f(\vp)(\xi):=f(\vp(\xi))$.
\end{proposition}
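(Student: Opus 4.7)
The plan is to reduce each of the three assertions to known statements about the classical fractional Sobolev space $H^{2\al}(0,1)$ (with Dirichlet boundary conditions), after noting that the Fourier-sine definition
$\norm{y}_{\cb_\al}^2 = \sum_{n\ge 1} \pi^{4\al} n^{4\al} (y^n)^2$
is equivalent to the standard norm of $H^{2\al}(0,1)$ on the subspace of functions vanishing at the boundary (see e.g.\ Lions--Magenes or Adams). With that identification in place, one should verify $\al>1/4 \Leftrightarrow 2\al>1/2$ and $\al<1/2 \Leftrightarrow 2\al<1$, so the three hypotheses match the standard thresholds $s>1/2$, $s>1/2$, $s<1$ for a scalar fractional Sobolev space in dimension one.

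For the \emph{Sobolev embedding}, I would give a direct Fourier argument rather than invoking the general theorem: since $|e_n(\xi)|\leq \sqrt{2}$, Cauchy--Schwarz gives
\[
|y(\xi)| \leq \sqrt{2}\sum_{n\ge 1} |y^n| = \sqrt{2}\sum_{n\ge 1} \la_n^\al |y^n|\cdot \la_n^{-\al} \leq \sqrt{2}\,\norm{y}_{\cb_\al}\Bigl(\sum_{n\ge 1}\la_n^{-2\al}\Bigr)^{1/2},
\]
and the last series converges precisely when $4\al>1$. Continuity on $(0,1)$ then follows from the uniform convergence of the truncated sums, which are themselves continuous.

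For the \emph{algebra property}, the cleanest route is to pass to an equivalent integral (Gagliardo) seminorm
\[
[y]_{\al}^{2} := \int_{0}^{1}\!\!\int_{0}^{1}\frac{|y(x)-y(\eta)|^{2}}{|x-\eta|^{1+4\al}}\, dx\, d\eta,
\]
which together with $\|y\|_\cb$ generates a norm equivalent to $\|\cdot\|_{\cb_\al}$ for $0<\al<1/2$. Then for $\vp,\psi\in\cb_\al$ with $\al>1/4$, one splits
\[
\vp(x)\psi(x) - \vp(\eta)\psi(\eta) = \bigl(\vp(x)-\vp(\eta)\bigr)\psi(x) + \vp(\eta)\bigl(\psi(x)-\psi(\eta)\bigr),
\]
uses $\|\vp\|_\infty, \|\psi\|_\infty \lesssim \|\vp\|_{\cb_\al},\|\psi\|_{\cb_\al}$ from the embedding just proved, and deduces $[\vp\psi]_\al \lesssim \|\vp\|_{\cb_\al}\|\psi\|_{\cb_\al}$; combining with the obvious $L^2$ bound $\|\vp\psi\|_\cb\leq \|\vp\|_\infty\|\psi\|_\cb$ yields \eqref{algebra} (up to an absorbable constant). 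The main subtlety is merely that $\cb_\al$ involves Dirichlet boundary behaviour, but since $\al<1/2$ the trace is not seen and the Gagliardo characterisation is insensitive to it.

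For the \emph{composition} estimate, the same Gagliardo representation makes the proof immediate: for $f\in\cac^{1,\textbf{b}}$ with Lipschitz constant $\|f'\|_\infty$,
\[
[f(\vp)]_{\al}^{2} \leq \|f'\|_\infty^{2}\,[\vp]_{\al}^{2},
\qquad
\|f(\vp)\|_\cb \leq |f(0)| + \|f'\|_\infty\|\vp\|_\cb,
\]
giving \eqref{compos} with $c_f$ depending only on $\|f\|_{\cac^{1,\textbf{b}}}$. I expect the main (rather mild) obstacle to be the verification that the Fourier-sine norm really coincides with the Gagliardo norm up to constants on the Dirichlet subspace; once this equivalence is admitted from the cited references \cite{Adams,run-sick}, all three items reduce to the short computations sketched above.
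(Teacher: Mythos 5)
The paper itself offers no proof of this proposition --- it is quoted as classical material from \cite{Adams,run-sick} --- so any complete argument is necessarily ``a different route''. Your Fourier proof of the embedding \eqref{sobol-incl} is correct and self-contained: Cauchy--Schwarz with the weights $\la_n^{\pm\al}$ works precisely because $\sum_n \la_n^{-2\al}<\infty$ iff $\al>1/4$. The problem lies in the sentence you use to license the Gagliardo characterisation, namely that ``since $\al<1/2$ the trace is not seen''. The space $\cb_\al$ has smoothness index $s=2\al$, so the trace threshold is $\al=1/4$, not $\al=1/2$. For $\al\in(1/4,1/2)$ one has $\cb_\al=\{u\in H^{2\al}(0,1):u(0)=u(1)=0\}$: the sine-spectral norm is equivalent to the Gagliardo norm only on this Dirichlet subspace, not on all of $H^{2\al}$. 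A concrete check: the constant function $1$ has sine coefficients of order $1/n$, so $\sum_n\la_n^{2\al}(y^n)^2\sim\sum_n n^{4\al-2}$ diverges as soon as $\al\ge 1/4$; hence $1\notin\cb_\al$ in exactly the range $\al\in(1/4,1/2)$ that the paper actually uses ($\ka>\max(1-\ga,1/4)$).

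This gap is harmless for the algebra property, provided you repair the justification: if $\vp,\psi\in\cb_\al$ with $\al>1/4$ they vanish at the endpoints (by your own embedding), hence so does $\vp\psi$, and the Gagliardo estimate then does give membership of the product in the Dirichlet subspace (note also that your argument only covers $1/4<\al<1/2$, whereas the statement is for all $\al>1/4$; for $\al\ge 1/2$ one needs the norm involving derivatives, though only the range $\al<1/2$ is used in the paper). For the composition property, however, the gap is real and cannot be patched by the same device: $f(\vp)$ takes the value $f(0)$ at the boundary, so for $\al\in(1/4,1/2)$ and $f(0)\neq 0$ your bound $[f(\vp)]_\al\le\norm{f'}_\infty[\vp]_\al$ shows $f(\vp)\in H^{2\al}$ but not $f(\vp)\in\cb_\al$; indeed with $\vp=0$ and $f\equiv 1$ one gets $f(\vp)=1\notin\cb_\al$, so the Gagliardo route can only prove \eqref{compos} for $\al<1/4$, or under an extra assumption such as $f(0)=0$ (equivalently, after replacing the spectral Dirichlet scale by a boundary-condition-free fractional Sobolev scale). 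You should flag this restriction explicitly rather than attribute it to an ``insensitive'' trace; as stated, your proof of the third bullet does not go through in the regime in which the paper invokes it.
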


\smallskip

Let now $S_t$ be the heat semigroup associated with $\Delta$, and notice that if an element $y\in L^2(0,1)$ can be decomposed as $y=\sum_{n\ge 1}y^n e_n$, then $S_t y=\sum_{n\ge 1} e^{-\la_n t}y^n e_n$.
The general theory of fractional powers of operators provides us with sharp estimates for the semigroup $S_t$ (see for instance \cite{pazy}):
\begin{proposition}\label{prop-sem-sob}
The heat semigroup $S_t$ satisfies the following properties:

\smallskip

\noindent $\bullet$ \emph{Contraction}: For all $t \geq 0$, $\al \geq 0$, $S_t$ is a contraction operator on $\cb_{\al}$.

\smallskip

\noindent $\bullet$ \emph{Regularization}: For all $t \in (0,T]$, $\al\geq 0$, $S_t$ sends $\cb$ on $\cb_{\al}$ and 
\begin{equation} \label{regu-prop-semi}
\norm{S_t \varphi}_{\cb_{\al}} \leq c_{\al,T}\, t^{-\al} \norm{\varphi}_{\cb}.
\end{equation}

\smallskip

\noindent $\bullet$ \emph{Hölder regularity}. For all $t\in (0,T]$, $\varphi \in \cb_{\al}$,
\begin{equation}\label{regu-hold-semi}
\norm{S_t\varphi-\varphi}_{\cb} \leq c_{\al,T}\, t^\al \norm{\varphi}_{\cb_{\al}}.
\end{equation}
\end{proposition}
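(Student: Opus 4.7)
The plan is to exploit the spectral decomposition of $\Delta$: since each $S_t$ acts diagonally on the basis $(e_n)$ as multiplication by $e^{-\la_n t}$, all three properties reduce to elementary scalar inequalities applied termwise to Fourier coefficients.

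For the contraction property, I would simply compute
\[
\norm{S_t \vp}_{\cb_\al}^2 = \sum_{n\ge 1} \la_n^{2\al} e^{-2\la_n t} (\vp^n)^2 \leq \sum_{n\ge 1} \la_n^{2\al} (\vp^n)^2 = \norm{\vp}_{\cb_\al}^2,
\]
using $e^{-\la_n t} \leq 1$ for $t,\la_n \geq 0$.

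For the regularization property, the key is the bound $\sup_{x>0} x^{2\al} e^{-2x} =: c_\al < \infty$ for every $\al \geq 0$. Writing $\la_n^{2\al} e^{-2\la_n t} = t^{-2\al} (\la_n t)^{2\al} e^{-2\la_n t} \leq c_\al t^{-2\al}$, I would conclude
\[
\norm{S_t \vp}_{\cb_\al}^2 = \sum_{n\ge 1} \la_n^{2\al} e^{-2\la_n t} (\vp^n)^2 \leq c_\al\, t^{-2\al} \sum_{n\ge 1} (\vp^n)^2 = c_\al\, t^{-2\al} \norm{\vp}_\cb^2.
\]
Since $t \in (0,T]$ the constant can be absorbed into $c_{\al,T}$.

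For the Hölder regularity, I would rely on the elementary inequality $|1-e^{-x}| \leq c_\al\, x^\al$ valid for all $x\geq 0$ and $\al\in[0,1]$ (which follows from $|1-e^{-x}|\le \min(1,x)$ together with interpolation). Applied with $x=\la_n t$ this gives $(e^{-\la_n t}-1)^2 \leq c_\al^2 (\la_n t)^{2\al}$, hence
\[
\norm{S_t \vp - \vp}_\cb^2 = \sum_{n\ge 1} (e^{-\la_n t}-1)^2 (\vp^n)^2 \leq c_\al^2\, t^{2\al} \sum_{n\ge 1} \la_n^{2\al}(\vp^n)^2 = c_\al^2\, t^{2\al} \norm{\vp}_{\cb_\al}^2,
\]
which yields the desired estimate after taking square roots. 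There is no real obstacle here; the only mildly delicate point is to write the scalar inequality $|1-e^{-x}|\leq c_\al x^\al$ with a constant uniform in $\al$ over the relevant range, but this follows at once from $|1-e^{-x}|\leq \min(1,x)\leq x^\al$ for $x \in [0,1]$ and $|1-e^{-x}|\leq 1\leq x^\al$ for $x\geq 1$ provided $\al\geq 0$.
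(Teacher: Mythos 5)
Your proof is correct, but it takes a genuinely different route from the paper: the paper offers no proof of this proposition, simply invoking the general theory of fractional powers of operators for analytic semigroups and referring the reader to Pazy's book. Your argument instead exploits the explicit diagonalization $S_t\vp=\sum_{n\ge1}e^{-\la_n t}\vp^n e_n$ and reduces each bullet to a termwise scalar inequality ($e^{-x}\le 1$, $\sup_{x>0}x^{2\al}e^{-2x}<\infty$, $|1-e^{-x}|\le x^\al$), which is completely self-contained and very much in the spirit of the paper's own remark that the interval case permits elementary Fourier-series considerations; the citation-based route, on the other hand, covers the more general situations alluded to in the introduction (divergence-form operators, general domains $D\subset\R^n$), where no explicit eigenbasis is available. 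One point you should make explicit: the scalar inequality $|1-e^{-x}|\le x^\al$ holds precisely for $\al\in[0,1]$, so the H\"older-regularity bullet is proved (and indeed is only true) for $0\le\al\le1$ --- taking $\vp=e_1$ and letting $t\to0$ shows that \eqref{regu-hold-semi} fails for $\al>1$ --- whereas the proposition's wording leaves $\al$ unrestricted there; your restriction is thus the correct reading rather than a loss of generality. Finally, your closing parenthetical ``provided $\al\ge0$'' is a slight slip: the step $x\le x^\al$ for $x\in[0,1]$ uses $\al\le1$, while $1\le x^\al$ for $x\ge1$ uses $\al\ge0$, so both restrictions are needed, exactly as in the range $\al\in[0,1]$ you announce.
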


\subsection{Young convolutional integrals}
The stochastic integrals involved in equation~\eqref{eq-gene-fbm} will all be understood in the Young sense. In order to define them properly, let us first introduce some notation concerning H\"older type spaces in time. To begin with, for any $\al\geq 0$ and any subinterval $I\subset [0,T]$, set $\cac^0(I;\cb_\al)$ for the space of continuous $\cb_\al$-valued functions on $I$, equipped with the supremum norm. Then H\"older spaces of $\cb_\al$-valued functions can be defined as follows: for $\ka\in (0,1)$, set
\begin{equation*}
\cac^\ka(I;\cb_{\al}):=\lcl y \in \cac^0(I;\cb_{\al}): \ \sup_{s<t \in I} \frac{\norm{y_t-y_s}_{\cb_{\al}}}{\lln t-s \rrn^\ka} < \infty \rcl.
\end{equation*} 
Observe now that the definition of our stochastic integrals weighted by the heat semigroup will require the introduction of a small variant of those H\"older spaces (see \cite{GT,RHE} for further details): we define $\cacha^\ka(I;\cb_{\al})$ as
\begin{equation*}
\cacha^\ka(I;\cb_{\al}):=\lcl y \in \cac^0(I;\cb_{\al}): \ \sup_{s<t\in I} \frac{\norm{y_t- S_{t-s} \, y_s}_{\cb_{\al}}}{\lln t-s \rrn^\ka} < \infty \rcl.
\end{equation*} 
In order to avoid confusion, the natural norms on the spaces $\cac^\ka(I;\cb_\al)$, $\cacha^\ka(I;\cb_\al)$ are respectively denoted by $\cn[\cdot;\cac^\ka(I;\cb_\al)]$, $\cn[\cdot;\cacha^\ka(I;\cb_\al)]$, etc. For the sake of conciseness, we shall often write $\cac^\ka(\cb_\al)$ (resp. $\cacha^\ka(\cb_\al)$) instead of $\cac^\ka([0,T];\cb_\al)$ (resp. $\cacha^\ka([0,T];\cb_\al)$). We also need to introduce a family of spaces $\cacha^{0,\ka}(I;\cb_\ka)$ in the following way:
\begin{lemma}\label{lem:imbed-cacha}
For any $\ka \in (0,1)$ and any subinterval $I\subset [0,T]$, let $\cacha^{0,\ka}(I;\cb_\ka)$ be the space associated with the norm
$$\cn[\cdot;\cacha^{0,\ka}(I;\cb_\ka)] :=\cn[\cdot;\cac^0(I;\cb_\ka)]+ \cn[\cdot;\cacha^\ka(I;\cb_\ka)].$$
Then the following continuous embedding holds true:
\begin{equation}\label{lien-der-delha}
\cacha^{0,\ka}(I;\cb_\ka) \subset \cac^\ka(I;\cb).
\end{equation}
More generally, for every $\la \geq \ka$, 
\begin{equation}\label{lien-der-delha-2}
\cn[y;\cac^\ka(I;\cb)] \leq \cn[y;\cacha^\ka(I;\cb_\la)]+c_\la \lln I\rrn^{\la-\ka} \cn[y;\cac^0(I;\cb_\la)].
\end{equation}
\end{lemma}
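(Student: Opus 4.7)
The plan is to reduce everything to the more general estimate \eqref{lien-der-delha-2}, since the embedding \eqref{lien-der-delha} corresponds to the special case $\la=\ka$. The key identity is the trivial decomposition
\begin{equation*}
y_t - y_s = \lp y_t - S_{t-s} y_s \rp + \lp S_{t-s} y_s - y_s \rp,
\end{equation*}
which splits the increment into the part controlled by the $\cacha^\ka$-seminorm and a part controlled by the Hölder regularity of the semigroup itself.

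For the first piece, I would use the elementary fact that $\norm{z}_\cb\leq \norm{z}_{\cb_\la}$ for every $\la\geq 0$ (since $\la_n\geq \pi^2\geq 1$ in the definition of $\cb_\la$). This immediately gives
\begin{equation*}
\norm{y_t - S_{t-s} y_s}_\cb \leq \norm{y_t - S_{t-s} y_s}_{\cb_\la}\leq \cn[y;\cacha^\ka(I;\cb_\la)]\, |t-s|^\ka.
\end{equation*}
For the second piece, I would invoke the Hölder regularity of the semigroup, namely estimate \eqref{regu-hold-semi} applied to $\vp=y_s\in \cb_\la$, which yields
\begin{equation*}
\norm{S_{t-s}y_s - y_s}_\cb \leq c_{\la,T}\, |t-s|^\la \norm{y_s}_{\cb_\la}\leq c_\la\, |t-s|^\la\, \cn[y;\cac^0(I;\cb_\la)].
\end{equation*}

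Summing the two contributions, dividing by $|t-s|^\ka$, and using $|t-s|\leq |I|$ together with $\la\geq \ka$, one obtains
\begin{equation*}
\sup_{s<t\in I}\frac{\norm{y_t-y_s}_\cb}{|t-s|^\ka}\leq \cn[y;\cacha^\ka(I;\cb_\la)] + c_\la\, |I|^{\la-\ka}\, \cn[y;\cac^0(I;\cb_\la)],
\end{equation*}
which is precisely \eqref{lien-der-delha-2}. The embedding \eqref{lien-der-delha} then follows by taking $\la=\ka$ and observing that the $\cac^0(I;\cb)$-norm of $y$ is also dominated by $\cn[y;\cac^0(I;\cb_\ka)]$, so both summands making up $\cn[\cdot;\cacha^{0,\ka}(I;\cb_\ka)]$ appear on the right-hand side.

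There is no real obstacle here: the argument is the standard splitting used to pass between semigroup-twisted Hölder norms and genuine Hölder norms. The only point requiring a touch of care is making sure that the norm comparison $\norm{\cdot}_\cb\leq\norm{\cdot}_{\cb_\la}$ is invoked explicitly, since it relies on the lower bound $\la_n\geq 1$ that is specific to the Dirichlet Laplacian on $(0,1)$.
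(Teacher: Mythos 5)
Your proof is correct and follows exactly the same route as the paper: split $y_t-y_s$ into $(y_t-S_{t-s}y_s)+(S_{t-s}-\Id)y_s$, bound the first term via the twisted H\"older seminorm in $\cb_\la$ (using $\norm{\cdot}_\cb\leq\norm{\cdot}_{\cb_\la}$) and the second via the semigroup estimate \eqref{regu-hold-semi}, then use $|t-s|\leq |I|$ and $\la\geq\ka$. Your explicit remark on the norm comparison and the reduction of \eqref{lien-der-delha} to the case $\la=\ka$ merely spells out steps the paper leaves implicit.
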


\begin{proof}
Indeed, owing to (\ref{regu-hold-semi}), one has, for every $s<t\in I$,
$$\norm{y_t-y_s}_\cb \leq \norm{y_t-S_{t-s} y_s}_\cb+\norm{(S_{t-s}-\Id) y_s}_\cb \leq \norm{y_t-S_{t-s} y_s}_{\cb_\la}+c_\la \lln t-s \rrn^\la \norm{y_s}_{\cb_\la}.$$

\end{proof}

With those definitions in hand, the following proposition (borrowed from \cite{RHE}) will be invoked in the sequel in order to give a meaning to our  stochastic integrals weighted by the heat semigroup:
\begin{proposition}\label{prop:yg-conv-int}
Consider a $\ga$-H\"older real-valued function $x$ defined on $[0,T]$. Let $I=[\ell_1,\ell_2]$ be a subinterval of $[0,T]$ and fix $\ka \in [0,\ga]$ such that $\ga+\ka>1$. Suppose that $z\in \cac^0(I;\cb_\la) \cap \cac^\ka(I;\cb_{\la-\al})$ for some parameters $\la \geq 0$, $0\leq \al \leq \min(\ka, \la)$. Then, for every $s<t \in I$, the convolutional Riemann sum
$$\sum_{t_k \in \Pi} S_{t-t_{k+1}} z_{t_{k+1}} \lc x_{t_{k+1}}-x_{t_k}\rc
$$
converges in $\cb_\la$ as the mesh of the partition $\Pi:=\{s=t_0 <t_1 < \ldots < t_n=t \}$ tends to $0$, and we denote the limit by $\int_s^t S_{t-u} z_u \, dx_u$.

\smallskip

\noindent
Moreover, for every $\vp \in \cb_\la$, there exists a unique path $y\in \cacha^\ga(I;\cb_\la)$ such that $y_{\ell_1}=\vp$ and $y_t-S_{t-s}y_s=\int_s^t S_{t-u}(z_u) \, dx_u$ if $s<t\in I$. For this function, the following estimate holds:
\begin{equation}\label{esti-yg}
\cn[y;\cacha^\ga(I;\cb_\la)] \leq c \norm{x}_\ga \lcl \cn[z;\cac^0(I;\cb_\la)]+\lln I\rrn^{\ka-\al} \cn[z;\cac^\ka(I;\cb_{\la-\al})] \rcl,
\end{equation}
for some constant $c$ that only depends on $(\ga,\ka,\la,\al)$.




\end{proposition}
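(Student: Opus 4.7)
The plan is to carry out a Young/sewing argument adapted to the convolutional setting, keeping track of the smoothing and the loss of regularity produced by the semigroup $S_\cdot$ through Proposition~\ref{prop-sem-sob}. Set
$$
J^\Pi_{s,t}\coloneq \sum_{t_k\in \Pi} S_{t-t_{k+1}}\, z_{t_{k+1}}\,[x_{t_{k+1}}-x_{t_k}]
$$
for a generic partition $\Pi=\{s=t_0<\cdots<t_n=t\}$ of $[s,t]\subset I$. To establish convergence in $\cb_\la$ as $|\Pi|\to 0$, I would compare $J^\Pi_{s,t}$ with the sum $J^{\Pi'}_{s,t}$ obtained after the insertion of a single point $u\in(t_k,t_{k+1})$. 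A direct computation gives
$$
J^{\Pi'}_{s,t}-J^{\Pi}_{s,t}=S_{t-t_{k+1}}\bigl[\,z_{t_{k+1}}-S_{t_{k+1}-u}z_u\bigr]\,(x_u-x_{t_k}),
$$
so the convolutional integral is controlled as soon as this "sewing defect" admits a summable bound.

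The key estimate would combine three ingredients. First, the triangle inequality gives
$$
\norm{z_{t_{k+1}}-S_{t_{k+1}-u}z_u}_{\cb_{\la-\al}}\leq \norm{z_{t_{k+1}}-z_u}_{\cb_{\la-\al}}+\norm{(S_{t_{k+1}-u}-\Id)z_u}_{\cb_{\la-\al}};
$$
the first term is bounded by $(t_{k+1}-u)^\ka \cn[z;\cac^\ka(I;\cb_{\la-\al})]$, and the second by $(t_{k+1}-u)^\al \cn[z;\cac^0(I;\cb_\la)]$ thanks to a Fourier-series variant of \eqref{regu-hold-semi} (using $|e^{-\la_n r}-1|\leq (\la_n r)^\al$). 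Second, the regularization estimate \eqref{regu-prop-semi} yields $\|S_{t-t_{k+1}}\|_{\cb_{\la-\al}\to \cb_\la}\leq c(t-t_{k+1})^{-\al}$. Third, $|x_u-x_{t_k}|\leq \|x\|_\ga (t_{k+1}-t_k)^\ga$. Multiplying these and using $\al\leq \ka$, one ends up, for every inserted point, with a contribution dominated by
$$
c\,\|x\|_\ga\bigl(\cn[z;\cac^0(I;\cb_\la)]+|I|^{\ka-\al}\cn[z;\cac^\ka(I;\cb_{\la-\al})]\bigr)(t-t_{k+1})^{-\al}(t_{k+1}-t_k)^{\al+\ga}.
$$
Since $\ga+\ka>1$ and $\al\leq \min(\ka,\la)<1$, writing $(t_{k+1}-t_k)^{\al+\ga}\leq |\Pi|^{\al+\ga-1}(t_{k+1}-t_k)$ and recognizing a Riemann sum that converges to $\int_s^t (t-u)^{-\al}du<\infty$ shows that the total defect over all inserted points is $O(|\Pi|^{\al+\ga-1})$. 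A standard dyadic/refinement argument then proves that $(J^\Pi_{s,t})_\Pi$ is Cauchy in $\cb_\la$, and the limit is the announced integral $\int_s^t S_{t-u}z_u\, dx_u$.

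Once the integral is defined, the bound \eqref{esti-yg} follows by comparing $J_{s,t}$ with the trivial "germ" $z_t(x_t-x_s)$ (the value of $J^\Pi_{s,t}$ for $\Pi=\{s,t\}$): the sewing estimate gives $\|J_{s,t}-z_t(x_t-x_s)\|_{\cb_\la}\leq c(t-s)^{\al+\ga}\cdot[\cdots]$, while $\|z_t(x_t-x_s)\|_{\cb_\la}\leq \|x\|_\ga(t-s)^\ga\cn[z;\cac^0(I;\cb_\la)]$; combining these and dividing by $(t-s)^\ga$ yields the right-hand side of \eqref{esti-yg}. For the existence and uniqueness part, one simply sets $y_t\coloneq S_{t-\ell_1}\vp+\int_{\ell_1}^t S_{t-u}z_u\, dx_u$ and checks from the additivity of the convolutional Riemann sums (as $|\Pi|\to 0$) that $y_t-S_{t-s}y_s=\int_s^t S_{t-u}z_u\, dx_u$; uniqueness is immediate since any two solutions have the same increments in the $\cacha^\ga$ sense.

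The main obstacle is the sewing step: one has to balance the singularity $(t-t_{k+1})^{-\al}$ produced by $S_{t-t_{k+1}}$ against the Hölder gain $(t_{k+1}-u)^\al$ coming from the continuity of the semigroup, and simultaneously exploit $\ga+\ka>1$ to ensure summability. The fact that the evaluation point of $z$ is taken at the right endpoint $t_{k+1}$ (rather than $t_k$) is essential for this trade-off to go through, which is the principal technical subtlety distinguishing the convolutional Young integral from its flat counterpart.
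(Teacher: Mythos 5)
Your overall route (a Young/sewing argument adapted to the semigroup weight) is the same one used in \cite{RHE,GT}, which the paper merely cites for this proposition; but as written your quantitative core has two genuine gaps. First, your per-insertion bound carries the factor $(t-t_{k+1})^{-\al}$ coming from \eqref{regu-prop-semi}, and this factor is $+\infty$ precisely for the last subinterval, where $t_{k+1}=t$ and $S_{t-t_{k+1}}=\Id$ provides no smoothing. There the defect is $[z_t-S_{t-u}z_u](x_u-x_{t_k})$ and must be measured directly in $\cb_\la$; but under the standing hypotheses $z\in\cac^0(I;\cb_\la)\cap\cac^\ka(I;\cb_{\la-\al})$ the quantity $\norm{z_t-S_{t-u}z_u}_{\cb_\la}$ has no H\"older-type modulus (neither $\norm{z_t-z_u}_{\cb_\la}$ nor $\norm{(S_{t-u}-\Id)z_u}_{\cb_\la}$ comes with a rate), so your uniform per-insertion estimate simply fails on that subinterval, and the "Riemann sum converging to $\int_s^t(t-u)^{-\al}du$" hides an infinite term. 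Second, your summability step uses $(t_{k+1}-t_k)^{\al+\ga}\leq|\Pi|^{\al+\ga-1}(t_{k+1}-t_k)$ and concludes that the total defect is $O(|\Pi|^{\al+\ga-1})$; this requires $\al+\ga>1$, which is not among the hypotheses. The assumption is $\ga+\ka>1$ with $0\leq\al\leq\min(\ka,\la)$, and $\al$ may be small or even $0$, in which case your bound does not tend to $0$ at all (for $\al=0$ the ingredient $\norm{(S_r-\Id)z_u}_{\cb_{\la-\al}}\leq c\,r^{\al}\norm{z_u}_{\cb_\la}$ carries no smallness either). In other words, your argument never makes the hypothesis $\ga+\ka>1$ do any work.

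The repair is exactly the point of the machinery in \cite{GT,RHE}: one works with the twisted increments $(\hat\delta z)_{uv}=z_v-S_{v-u}z_u$ and measures them in the weaker space $\cb_{\la-\ka}$ (legitimate since $\al\leq\ka$ gives $\cb_{\la-\al}\subset\cb_{\la-\ka}$, and \eqref{regu-hold-semi} then yields a full $(v-u)^\ka$ rate), so that the coboundary of the germ has time order $\ga+\ka>1$; the missing $\al$ (or $\ka$) space derivatives are then recovered through the semigroup weights appearing in the convolutional Riemann sums, via the smoothing property of the twisted sewing map $\hat\Lambda$, with the scale-by-scale bookkeeping done so that the integrable singularity $(t-u)^{-\al}$ (with $\al<1$) never sits on the terminal subinterval. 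This is also where the factor $\lln I\rrn^{\ka-\al}$ in \eqref{esti-yg} originates, whereas your sketch produces it by an ad hoc interpolation inside a bound that does not close. The remaining parts of your proposal (additivity giving the existence of $y$, uniqueness because $\hat\delta$ of the difference vanishes, and the comparison of $J_{s,t}$ with the germ to get \eqref{esti-yg}) are fine once the sewing step is established correctly, but as it stands the convergence of the Riemann sums — the heart of the proposition — is not proved.
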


\subsection{Malliavin calculus techniques}
This subsection is devoted to present the Malliavin calculus setting which we shall work in, having in mind the differentiability properties of the solution to \eqref{eq-gene-fbm}. 

\subsubsection{Wiener space associated to fBm}
Let us first be more specific about the probabilistic setting in which we will work.
For some fixed $H\in(1/2,1)$, we consider $(\oom,\cf,\bp)$ the canonical probability space associated with the fractional
Brownian motion with Hurst parameter $H$. That is,  $\oom=\cac_0([0,T];\R^{d})$ is the Banach space of continuous functions
vanishing at $0$ equipped with the supremum norm, $\cf$ is the Borel sigma-algebra and $\bp$ is the unique probability
measure on $\oom$ such that the canonical process $B=\{B_t, \; t\in [0,T]\}$ is a $d$-dimensional fractional Brownian motion with Hurst
parameter $H$, with covariance function 
\begin{equation}\label{eq:cov-fbm}
\be \big[  B_t^i \, B_s^j \big] = \frac 12 \lp t^{2H} + s^{2H} - |t-s|^{2H}  \rp \, \mathbf{1}_{(i=j)},
\qquad s,t\in [0,T].
\end{equation}
In particular, the paths of $B$ are almost surely $\ga$-H\"older continuous for all $\ga \in (0,H)$.

\smallskip

Consider then a fixed parameter  $H>1/2$, and let us start by briefly describing the abstract 
Wiener space introduced for Malliavin calculus purposes (for a more general and complete description, we refer the reader to \cite[Section 3]{nua-sau}). 

\smallskip

Let $(e_1,\ldots,e_d)$ be the canonical basis of $\R^d$, $\ce$ be the set of $\R^{d}$-valued step functions on $[0,T]$ and $\ch$ the completion of $\ce$ with respect to the semi-inner product
\[
\langle \mathbf{1}_{[0,t]} \, e_{i}, \mathbf{1}_{[0,s]} \, e_{j}\rangle_\ch := R_H(s,t) \, \mathbf{1}_{(i=j)}, \qquad s,t \in [0,T].
\]
Then, one constructs an isometry $K^*_H: \ch \rightarrow  L^2([0,T];\R^d)$  such that $K^*_H(\mathbf{1}_{[0,t]}\, e_{i}) = \mathbf{1}_{[0,t]}$  $K_H(t,\cdot)\,e_{i}$, 
where the kernel $K=K_H$ is given by 
\[
K(t,s)= c_H s^{\frac 12 -H} \int_s^t (u-s)^{H-\frac 32} u^{H-\frac 12} \, du
\]
and verifies that $\be[B_s^{i}\, B_t^{i}]= \int_0^{s\land t} K(t,r) K(s,r)\, dr$, for some constant $c_H$. Moreover, let us observe that $K^*_H$ can be represented 
in the following form:
\begin{equation}\label{eq:def-K-star}
[K_H^* \vp]_t = \int_t^T \vp_r \partial_r K(r,t) \, dr
=d_H t^{H-1/2} \lc  I_{T^-}^{H-1/2} \lp u^{H-1/2} \vp \rp \rc_t,
\end{equation}
where $I_{T^-}^{\al}$ stands for the fractional integral of order $\al$.
The fractional Cameron-Martin space can be introduced in the following way: let 
$\ck_H : L^2([0,T];\R^d) \rightarrow \ch_H := \ck_H(L^2([0,T];\R^d))$ be the operator defined by
\[
[\ck_H h](t) := \int_0^t K(t,s) \, h(s)\, ds, \qquad h\in L^2([0,T];\R^d).
\]
Then, $\ch_H$ is the Reproducing Kernel Hilbert space associated with the fractional Brownian motion $B$. Observe that, in the case of the 
classical Brownian motion, one has that $K(t,s)=\mathbf{1}_{[0,t]}(s)$, $K^*$ is the identity operator in $L^2([0,T];\R^d)$ and $\ch_H$ is the usual Cameron-Martin 
space. 

\smallskip

In order to deduce that $(\oom,\ch,\bp)$ defines an abstract Wiener space, we remark that $\ch$ is continuously and densely embedded in $\oom$. In fact, 
one proves that the operator $\crr_H :\ch \rightarrow \ch_H$ given by
\[
\crr_H \psi := \int_0^\cdot K(\cdot,s) [K^* \psi](s)\, ds
\]
defines a dense and continuous embedding from $\ch$ into $\oom$; this is due to the fact that $\crr_H \psi$ is $H$-H\"older continuous (for details, see 
\cite[p. 400]{nua-sau}).  

\smallskip

Let us also recall that there exists a $d$-dimensional Wiener process $W$ defined on $(\oom,\ch,\bp)$ such that $B$ can be expressed as 
\begin{equation}\label{eq:volterra-representation}
B_t=\iot K(t,r) \, dW_r, \quad t\in\ott.
\end{equation}
This formula will be referred to as Volterra's representation of fBm.

\subsubsection{Malliavin calculus for $B$}
Let us introduce now the Malliavin derivative operator on the Wiener space $(\oom,\ch,\bp)$. Namely, we first let $\cs$ be the 
family of smooth functionals $F$ of the form
\[
F=f(B(h_1),\dots,B(h_n)),
\]
where $h_1,\dots,h_n\in \ch$, $n\geq 1$, and $f$ is a smooth function having polynomial growth together with all its  partial derivatives. 
Then, the Malliavin derivative of such a functional $F$ is the $\ch$-valued random variable defined by
\[
\cd F= \sum_{i=1}^n \frac{\partial f}{\partial x_i} (B(h_1),\dots,B(h_n)) h_i.
\]
For all $p>1$, it is known that the operator $\cd$ is closable from $L^p(\oom)$ into $L^p(\oom; \ch)$ (see e.g. \cite[Chapter 1]{Nua}).
We will still denote by $\cd$ the closure of this operator, whose domain is usually denoted by $\D^{1,p}$ and is defined 
as the completion of $\cs$ with respect to the norm
\[
\|F\|_{1,p}:= \left( E(|F|^p) + E( \|\cd F\|_\ch^p ) \right)^{\frac 1p}.
\]
Sobolev spaces $\D^{k,p}$ for any $k\in\N$ and $p\ge 1$ can be defined in the same way, and we denote by $\D^{k,p}_{{\rm loc}}$ the set of random variables $F$ for which there exists a sequence $(\oom_n,F_n)_{n\geq 1} \subset \cf \times \D^{k,p}$ such that $\oom_n \uparrow \oom$ a.s. and $F=F_n$ a.s. on $\oom_n$. We also set $\D^{\infty}=\cap_{k,p}\D^{k,p}$.

\begin{remark}\label{rmk:partial-mall-deriv}
For $F\in\D^{1,2}$, one can write $\cd F=\sum_{j=1}^{d}\cd^{j}F \, e_j$, where $\cd^{j}F$ denotes the Malliavin derivative with respect to the $j\textsuperscript{th}$ component of $B$.
\end{remark}

\smallskip

Since we deal with pathwise equations, we shall also be able to differentiate them in a pathwise manner. The relation between almost sure and Malliavin derivatives has been established by Kusuoka, and we quote it according to \cite[Proposition 4.1.3]{Nua}.
\begin{proposition}
A random variable $F$ is said to be $\ch$-differentiable if for almost all $\om\in \oom$ and for any $h\in \ch$, the map $\nu \mapsto F(\om + \nu \crr_H h)$ is differentiable. Those random variables belong to the space $\D^{1,p}_{{\rm loc}}$, for any $p>1$. Moreover, the following relation holds true:
\begin{equation}\label{eq:rel-malliavin-pathwise-deriv}
\langle \cd F,h\rangle_\ch =  D F(B) (\crr_H h), \qquad h\in \ch,
\end{equation}
where we recall that $\cd$ stands for the Malliavin derivative and $D$ for the pathwise differentiation operator.
\end{proposition}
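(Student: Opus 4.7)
The plan is to first check the identity for the smooth family $\cs$ and then extend it, via a Cameron--Martin/localization argument, to any $\ch$-differentiable $F$.

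\medskip

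\textbf{Step 1: smooth functionals.} Suppose first that $F\in\cs$, so that $F=f(B(h_1),\dots,B(h_n))$ with $h_1,\dots,h_n\in\ch$. Since $\crr_H h\in\ch_H\subset\oom$, the shift $B\mapsto B+\nu\crr_H h$ is well defined, and one can use the fact that, for every $k$, $B(h_k)(\om+\nu\crr_H h)=B(h_k)(\om)+\nu\langle h_k,h\rangle_\ch$. This identity follows from the very definition of $\crr_H$, together with the reproducing property of $\ch_H$ (so that $(\crr_H h)(h_k)=\langle h_k,h\rangle_\ch$). Differentiating in $\nu$ at $\nu=0$ yields
\[
DF(B)(\crr_H h)=\sum_{k=1}^n \partial_k f(B(h_1),\dots,B(h_n))\,\langle h_k,h\rangle_\ch=\langle \cd F,h\rangle_\ch ,
\]
which proves both the $\ch$-differentiability and the announced identity for $F\in\cs$.

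\medskip

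\textbf{Step 2: localization.} For a general $\ch$-differentiable $F$, one would introduce a localizing sequence. Because the pathwise derivative $\om\mapsto DF(\om)$ is, for almost every $\om$, a continuous linear functional on $\crr_H(\ch)$, the quantity $\|DF(B)\circ\crr_H\|_{\ch^*}$ is an almost surely finite random variable. The natural localization is then to set
\[
\oom_n:=\{|F|\leq n\}\cap\{\|DF(B)\circ\crr_H\|_{\ch^*}\leq n\},
\]
so that $\oom_n\uparrow\oom$ a.s. On each $\oom_n$ one truncates $F$ into a bounded random variable $F_n$ with bounded $\ch$-derivative, and the goal is to exhibit a sequence in $\cs$ approximating $F_n$ in the $\|\cdot\|_{1,p}$-norm. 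This is where I expect the main difficulty to lie.

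\medskip

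\textbf{Step 3: approximation and closability.} To approximate $F_n$ one smooths in the $\ch_H$-directions by convolving with a smooth density on a finite-dimensional subspace $V_m\subset\ch_H$ spanned by $\crr_H e_1,\dots,\crr_H e_m$ (where $(e_k)_k$ is an orthonormal basis of $\ch$). The Cameron--Martin theorem guarantees that such a shift is absolutely continuous with respect to $\bp$, so the smoothed functional $F_n^{(m,\eps)}$ is well defined, belongs to $\cs$ (up to a further polynomial truncation that keeps it smooth and cylindrical), and one computes explicitly that $\cd F_n^{(m,\eps)}$ is obtained by applying the same convolution to the pathwise derivative of $F_n$, via the identity of Step 1. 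As $\eps\downarrow 0$ and $m\uparrow\infty$, $F_n^{(m,\eps)}\to F_n$ in $L^p(\oom)$ and, using Step 1, $\cd F_n^{(m,\eps)}$ converges in $L^p(\oom;\ch)$ to the $\ch$-valued random variable $\Phi_n$ characterized by $\langle\Phi_n,h\rangle_\ch=DF_n(B)(\crr_H h)$. Closability of $\cd$ then forces $F_n\in\D^{1,p}$ and $\cd F_n=\Phi_n$, and by the very construction of $\D^{1,p}_{\mathrm{loc}}$ we obtain $F\in\D^{1,p}_{\mathrm{loc}}$ together with the identity \eqref{eq:rel-malliavin-pathwise-deriv}.

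\medskip

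The delicate point is Step 3: one must justify that the finite-dimensional convolution approximation actually produces cylindrical smooth functionals in $\cs$ whose Malliavin derivatives converge to $\Phi_n$ in $L^p(\oom;\ch)$, and that the resulting limit does not depend on the chosen basis or mollifier. All the remaining pieces (Cameron--Martin quasi-invariance, explicit computation for cylindrical $F$, closability of $\cd$) are standard Malliavin calculus inputs available in \cite[Chapter 1]{Nua}.
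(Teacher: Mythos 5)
The paper does not prove this proposition at all: it is Kusuoka's theorem, quoted verbatim from \cite[Proposition 4.1.3]{Nua}, so there is no in-paper argument to compare with. Your Step 1 (the identity for cylindrical $F\in\cs$, via the Cameron--Martin shift and the reproducing property $B(h_k)(\om+\nu\crr_H h)=B(h_k)(\om)+\nu\langle h_k,h\rangle_\ch$) is correct and standard. But the rest is an outline with a genuine gap, located exactly where you flag it, and the gap is not a routine technicality: it is the substance of Kusuoka's theorem.

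Concretely: (i) in Step 2 you never construct $F_n$. The set $\oom_n$ is cut out by the pathwise quantity $\|DF(B)\circ\crr_H\|_{\ch^*}$, and producing a random variable $F_n\in\D^{1,p}$ that agrees with $F$ on $\oom_n$ cannot be done by ``truncating'': composing $F$ with a cutoff only controls $|F|$, not the derivative, and multiplying by a smooth cutoff of $\|DF(B)\circ\crr_H\|_{\ch^*}$ presupposes that this random variable is itself Malliavin differentiable --- which is part of what is being proved. (ii) In Step 3, convolving along a finite-dimensional subspace $V_m\subset\ch_H$ does \emph{not} produce an element of $\cs$: the mollified functional still depends on the whole path, and no ``polynomial truncation'' makes it cylindrical; the standard device is to first condition on the $\sigma$-field generated by $B(e_1),\dots,B(e_m)$ (or project onto it) and then mollify, and one must then justify the interchange of conditioning with the pathwise derivative. (iii) The convergence $\cd F_n^{(m,\eps)}\to\Phi_n$ in $L^p(\oom;\ch)$ requires an integrable domination of the pathwise gradient which your localization does not provide. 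Note also that the hypothesis as stated (differentiability of $\nu\mapsto F(\om+\nu\crr_H h)$ for each fixed $h$) does not by itself yield that $DF(\om)$ is a bounded linear functional on $\crr_H(\ch)$, which you assume in Step 2; Kusuoka's result is formulated under $\ch$-continuous differentiability. Since the paper simply cites \cite[Proposition 4.1.3]{Nua}, the appropriate course here is to do the same rather than attempt a self-contained proof.
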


\smallskip

Stochastic analysis techniques are widely used in order study laws of random variables defined on a Wiener space. Let us recall the main criterions we shall use in this direction:
\begin{proposition}\label{prop:density-criterions}
Let $F$ be a real-valued random variable defined on $(\oom,\cf,\bp)$. Then

\smallskip

\noindent\emph{(i)} If $F\in\D^{1,p}_{{\rm loc}}$ for $p>1$ and $\|\cd F\|_{\ch}>0$ almost surely, then the law of $F$ admits a density $p$ with respect to Lebesgue measure.

\smallskip

\noindent\emph{(ii)} If  $F\in\D^{\infty}$ and $\be[\|\cd F\|_{\ch}^{-p}]$ is finite for all $p\ge 1$, then the density $p$ of $F$ is infinitely differentiable.
\end{proposition}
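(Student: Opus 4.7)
The plan is to derive both parts as classical consequences of the Malliavin integration by parts formula, which asserts the duality between $\cd$ and its $L^2$-adjoint, the Skorohod divergence $\delta$: for every $F\in\D^{1,2}$ and $u\in\dom\delta$,
\[
\be\big[\langle \cd F,u\rangle_\ch\big] = \be[F\,\delta(u)].
\]
Part (i) is qualitative and relies only on $\|\cd F\|_\ch>0$ a.s., while part (ii) is quantitative and exploits the finiteness of negative moments of $\|\cd F\|_\ch$.

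For (i), I would first use the localizing sequence $(\oom_n,F_n)$ attached to $F\in\D^{1,p}_{{\rm loc}}$ to reduce matters to showing that, for each $n$ and each Borel set $A\subset\R$ of Lebesgue measure zero, $\bp(\{F_n\in A\}\cap\oom_n)=0$; a countable union then concludes. On each $\oom_n$ the strategy is that of Bouleau--Hirsch: approximate $\mathbf{1}_A$ by smooth functions $0\le\psi_k\le 1$ with $\psi_k\to\mathbf{1}_A$ a.e., apply the chain rule $\cd(\Psi_k(F_n)) = \psi_k(F_n)\,\cd F_n$ to the primitives $\Psi_k(x):=\int_0^x\psi_k$, and pass to the limit to obtain $\mathbf{1}_A(F_n)\,\cd F_n = 0$ a.s. Taking $\ch$-norms and using $\|\cd F_n\|_\ch>0$ a.s.\ then forces $\mathbf{1}_A(F_n)=0$ a.s.

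For (ii), set $u:=\cd F/\|\cd F\|_\ch^2$. The first task is to prove that $u\in \D^\infty(\ch)$; this relies on the chain rule applied to $x\mapsto x^{-1}$ (after a suitable truncation near $0$) combined with the hypothesis that $\|\cd F\|_\ch^{-1}$ has moments of every order. In particular $u\in\dom\delta$, and since $\langle\cd F,u\rangle_\ch=1$ the integration by parts formula gives
\[
\be[\varphi'(F)] = \be\big[\varphi'(F)\langle\cd F,u\rangle_\ch\big] = \be\big[\langle\cd\varphi(F),u\rangle_\ch\big] = \be[\varphi(F)\,\delta(u)]
\]
for any $\varphi\in C_b^\infty(\R)$. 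Iterating yields random variables $H_k\in\bigcap_{p\ge 1}L^p(\oom)$, built inductively as $H_{k+1}=\delta(H_k u)$, with $\be[\varphi^{(k)}(F)] = \be[\varphi(F)\,H_k]$ for every $k\ge 1$. The bound $|\be[\varphi^{(k)}(F)]|\le\|H_k\|_{L^1}\|\varphi\|_\infty$ then expresses that the $k$-th distributional derivative of the law $\mu_F$ is a finite measure for every $k$, which by a standard duality argument implies that $\mu_F$ has a $C^\infty$ density.

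The main obstacle is the book-keeping of integrability throughout the iteration in (ii): one must repeatedly apply the chain rule to nested expressions of the form $H_k u$ while ensuring that each Skorohod integral $\delta(H_k u)$ still lies in $\bigcap_p L^p$. Propagating the negative moments of $\|\cd F\|_\ch$ through arbitrarily many differentiations is precisely what forces the assumption $F\in\D^\infty$ rather than merely $F\in\D^{k,p}$ for some fixed $k$.
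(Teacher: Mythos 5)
The paper offers no proof of this proposition at all: it is recalled as a pair of standard criteria (Bouleau--Hirsch for (i), the negative-moment criterion for (ii)) quoted from Nualart's book \cite{Nua}, so your proposal has to be measured against the classical textbook arguments rather than against anything in the paper. Your sketch of (ii) is exactly that classical argument and is correct in outline: with $u=\cd F/\|\cd F\|_{\ch}^{2}$, one first shows $u$ lies in the domain of $\delta$ (this uses both $F\in\D^{\infty}$ and the negative moments of $\|\cd F\|_{\ch}$), iterates the duality to get $\be[\varphi^{(k)}(F)]=\be[\varphi(F)H_k]$ with $H_k\in\cap_{p\geq 1}L^p(\oom)$, and concludes smoothness of the density from the bounds $|\be[\varphi^{(k)}(F)]|\leq \|H_k\|_{L^1}\|\varphi\|_\infty$.

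In (i), however, the limiting step as you wrote it does not work. You take smooth $0\le\psi_k\le 1$ with $\psi_k\to\mathbf{1}_A$ only Lebesgue-almost everywhere and claim that passing to the limit in $\cd(\Psi_k(F_n))=\psi_k(F_n)\,\cd F_n$ yields $\mathbf{1}_A(F_n)\,\cd F_n=0$. To identify the limit of the right-hand side you would need $\psi_k(F_n)\to\mathbf{1}_A(F_n)$ almost surely, i.e. you would need the law of $F_n$ not to charge the Lebesgue-null set where $\psi_k$ fails to converge to $\mathbf{1}_A$ --- which is precisely the absolute continuity you are trying to prove, so the argument is circular. The classical repair is to approximate from outside: choose open sets $G_k\supset A$ decreasing with $|G_k|\downarrow 0$ and continuous $\psi_k$ with $\mathbf{1}_A\le \mathbf{1}_{G_{k+1}}\le \psi_k\le \mathbf{1}_{G_k}$; then $\Psi_k(F_n)\to 0$ in $L^p$, while $\psi_k(F_n)\,\cd F_n\to \mathbf{1}_{\cap_k G_k}(F_n)\,\cd F_n$ strongly in $L^p(\oom;\ch)$ by everywhere-monotone, dominated convergence, and the closedness of $\cd$ (extract a weakly convergent subsequence, using $p>1$) forces $\mathbf{1}_{\cap_k G_k}(F_n)\,\cd F_n=0$, hence $\mathbf{1}_A(F_n)\,\|\cd F_n\|_{\ch}=0$ since $A\subset\cap_k G_k$. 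A second, smaller slip: $\|\cd F_n\|_{\ch}>0$ is only guaranteed almost surely on $\oom_n$ (where $\cd F_n=\cd F$ by the local property of the derivative), not on all of $\oom$; this is enough, because you only need $\bp(\{F_n\in A\}\cap\oom_n)=0$, but it should be stated that way rather than as an almost sure statement on $\oom$.
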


\section{Existence of the density in the case of Nemytskii-type vector fields}\label{sec:existence-density}

In this section, we first consider a general equation of the form
\begin{equation}\label{eq-gene}
y_t=S_t \vp +\int_0^t S_{t-u}(F_i(y)_u) \, dx^i_u \quad , \quad \vp \in L^2(0,1) \ , \ t\in [0,T],
\end{equation}
driven by a $d$-dimensional noise $x=(x^1,\ldots,x^d)$ considered as a $\cac^{\ga}$ function with $\ga\in(1/2,1)$. We shall be able to handle the general case of a perturbation involving Nemytskii operators, that is
$$F_i(\vp)(\xi):=f_i(\vp (\xi)) \quad , \quad \vp \in \cb \ , \ \xi\in (0,1).$$
for smooth enough functions $f_i:\R \to \R$, $i=1,\ldots,d$.

\smallskip

Thus, Equation (\ref{eq-gene}) can here be written as
\begin{equation}\label{eq-nem}
y_t=S_t \vp+\int_0^t S_{t-u} (f_i(y_u)) \, dx^i_u \quad , \quad \vp\in \cb \ , \ t\in [0,T],
\end{equation}
or equivalently, in a multiparameter setting,
$$y(t,\xi)=\int_0^1 G_t(\xi,\eta) \vp(\eta) \, d\eta+\int_0^1 \int_0^t G_{t-u}(\xi,\eta) f_i(y(u,\eta)) \, dx^i_u d\eta \quad , \ t\in [0,T] , \xi \in (0,1),$$
where $G_t$ stands for the heat kernel on $(0,1)$ associated with Dirichlet boundary conditions.

\smallskip

\noindent
It is readily checked that if each $f_i$ belongs to $\cac^{1,\textbf{b}}(\R;\R)$ and $y\in \cacha^{0,\ka}(\cb_\ka)$ for some $\ka \in (\max (1-\ga,1/4),1/2)$, then the integral in the right-hand-side of (\ref{eq-nem}) can be interpreted with Proposition \ref{prop:yg-conv-int}. Indeed, owing to (\ref{compos}), we know that $f(y) \in \cac^0(\cb_\ka)$, while, due to the embedding (\ref{lien-der-delha}),
$$\cn[f(y);\cac^\ka(\cb)] \leq \norm{f'}_\infty \cn[y;\cac^\ka(\cb)] \leq c \norm{f'}_\infty \cn[y;\cacha^{0,\ka}(\cb_\ka)] < \infty.$$
For the remainder of the section, we shall rely on the following regularity assumptions.
\begin{hypothesis}\label{hyp:young-evol-eq}
We consider $ \ka \in (\max (1-\ga,1/4),1/2)$ and an initial condition $\vp\in \cb_\ka$. The family of functions $\{f_1,\ldots,f_d\}$ is such that $f_i$ is an element of $\cac^{3,\textbf{b}}(\R;\R)$ for $i=1,\ldots,d$.
\end{hypothesis}

In this context, the following existence and uniqueness result has been proven in \cite[Theorem 3.10]{RHE}:

\begin{proposition}\label{prop:exi-uni}
Under Hypothesis \ref{hyp:young-evol-eq}, Equation (\ref{eq-nem}) interpreted with Proposition \ref{prop:yg-conv-int} admits a unique solution $y\in \cacha^{0,\ka}(\cb_\ka)$, where we recall that the space  $\cacha^{0,\ka}(\cb_\ka)$ has been defined at Lemma \ref{lem:imbed-cacha}.
\end{proposition}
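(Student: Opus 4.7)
The plan is to reprove this via a Picard iteration scheme in the space $\cacha^{0,\ka}(I;\cb_\ka)$ on suitably short subintervals $I\subset[0,T]$, then concatenate. Define the solution map
$$\Gamma(y)_t := S_{t-\ell_1}\vp + \int_{\ell_1}^t S_{t-u}\bigl(f_i(y_u)\bigr)\, dx^i_u,\qquad t\in I=[\ell_1,\ell_2],$$
where the integral is to be interpreted in the sense of Proposition~\ref{prop:yg-conv-int} with the choice of parameters $\la=\ka$ and $\al=\ka$. To check this is legitimate, observe that for $y\in\cacha^{0,\ka}(I;\cb_\ka)$ the composition estimate \eqref{compos} (applicable since $\ka<1/2$ and $f_i\in\cac^{1,\textbf{b}}$) yields $f_i(y)\in\cac^0(I;\cb_\ka)$ with $\norm{f_i(y_u)}_{\cb_\ka}\leq c_{f_i}\{1+\norm{y_u}_{\cb_\ka}\}$, while the embedding \eqref{lien-der-delha} of Lemma~\ref{lem:imbed-cacha} and the elementary bound $\norm{f_i(y_t)-f_i(y_s)}_\cb\leq\norm{f_i'}_\infty\norm{y_t-y_s}_\cb$ give $f_i(y)\in\cac^\ka(I;\cb)$.

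Next I would establish stability of a suitable ball: from \eqref{esti-yg} of Proposition~\ref{prop:yg-conv-int} one obtains
$$\cn[\Gamma(y)-S_{\cdot-\ell_1}\vp;\cacha^\ga(I;\cb_\ka)]\leq c\norm{x}_\ga\bigl\{\cn[f_i(y);\cac^0(I;\cb_\ka)] + |I|^{\ka-\al}\cn[f_i(y);\cac^\ka(I;\cb_{\ka-\al})]\bigr\},$$
and combining this with \eqref{compos}, the boundedness of $f_i'$ and the trivial embedding of $\cacha^\ga(I;\cb_\ka)$ into $\cacha^{0,\ka}(I;\cb_\ka)$ (valid since $\ga>\ka$ and $|I|\leq T$), one gets a linear bound of the form
$$\cn[\Gamma(y);\cacha^{0,\ka}(I;\cb_\ka)]\leq c\norm{\vp}_{\cb_\ka} + c\norm{x}_\ga\,|I|^{\eta}\bigl\{1+\cn[y;\cacha^{0,\ka}(I;\cb_\ka)]\bigr\}$$
for some $\eta=\eta(\ga,\ka)>0$. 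Invariance of a ball of radius $R=2c\norm{\vp}_{\cb_\ka}+1$ therefore holds as soon as $|I|$ is sufficiently small depending on $\norm{x}_\ga$ only.

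The contraction step proceeds identically on the difference $f_i(y)-f_i(\tilde y)$: a first-order Taylor expansion of $f_i$ combined with \eqref{compos}, \eqref{algebra} and the boundedness of $f_i',f_i''$ yields a bound
$$\cn[f_i(y)-f_i(\tilde y);\cac^0(I;\cb_\ka)] + |I|^{\ka-\al}\cn[f_i(y)-f_i(\tilde y);\cac^\ka(I;\cb_{\ka-\al})]\leq c\,(1+R)\,\cn[y-\tilde y;\cacha^{0,\ka}(I;\cb_\ka)],$$
and another application of \eqref{esti-yg} produces a contraction constant of order $\norm{x}_\ga|I|^\eta$, which can be made $<1/2$ by a further shrinking of $|I|$. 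Banach's fixed-point theorem then delivers a unique solution on $I$.

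Finally, to upgrade this to a solution on the whole of $[0,T]$, the crucial observation is that because $f_i$ and its derivatives are bounded, the above bounds are \emph{linear} rather than exponential in $\cn[y;\cacha^{0,\ka}]$, and the length of the interval on which the fixed-point argument works depends only on $\norm{x}_\ga$, not on the size of the current initial condition. One therefore iterates the construction on successive intervals $[kT_0,(k+1)T_0]$ of uniform length $T_0=T_0(\norm{x}_\ga,\ga,\ka)$, using as new initial condition the terminal value of the previously constructed piece (which lies in $\cb_\ka$ by the $\cacha^\ga(\cb_\ka)$-regularity of the solution). The main technical obstacle is the bookkeeping between the two ingredients of the $\cacha^{0,\ka}$ norm (the supremum norm in $\cb_\ka$ and the $\cacha^\ka$ seminorm in $\cb_\ka$) when applying Proposition~\ref{prop:yg-conv-int}, together with ensuring via Lemma~\ref{lem:imbed-cacha} that $f_i(y)$ has enough H\"older regularity in the lower-order Sobolev scale $\cb_{\ka-\al}$ to feed back into the convolutional Young estimate.
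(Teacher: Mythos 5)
Your local fixed-point setup is fine (the choice $\la=\ka$, $\al=\ka$ in Proposition \ref{prop:yg-conv-int} is legitimate, and the smallness factor $\lln I\rrn^{\ga-\ka}$ indeed comes from downgrading $\cacha^\ga$ to $\cacha^\ka$, not from $\lln I\rrn^{\ka-\al}=1$), and this is the same strategy as the proof the paper invokes, since the paper gives no argument of its own but simply cites \cite[Theorem 3.10]{RHE}. The genuine gap is in your concatenation step. You claim that because $f_i$ and its derivatives are bounded, the admissible interval length depends only on $\norm{x}_\ga$ and not on the current initial condition. That is true for the invariance of the ball $R=2c\norm{\vp}_{\cb_\ka}+1$, but not for the contraction: your own displayed difference bound carries the factor $(1+R)$, which comes from \eqref{algebra} and \eqref{compos} because $\norm{f_i'(y_u)\cdot(y_u-\tilde y_u)}_{\cb_\ka}$ is controlled by $(1+\norm{y_u}_{\cb_\ka}+\norm{\tilde y_u}_{\cb_\ka})\norm{y_u-\tilde y_u}_{\cb_\ka}$; unlike the finite-dimensional Young ODE case, the $\cb_\ka$-norm of a composition grows linearly in $\norm{y_u}_{\cb_\ka}$ even for bounded $f_i$. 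Hence the contraction constant is of order $\norm{x}_\ga\lcl \lln I\rrn^{\ga-\ka}+\lln I\rrn^{\ga}(1+R)\rcl$ with $R\gtrsim\norm{\vp}_{\cb_\ka}$, and the step length at stage $k$ must shrink like $(1+\norm{y_{\ell_k}}_{\cb_\ka})^{-1/\ga}$. You cannot simply drop this dependence, and the naive bookkeeping then fails: the per-step growth estimate gives $\norm{y_{\ell_{k+1}}}_{\cb_\ka}\leq \norm{y_{\ell_k}}_{\cb_\ka}+c$, so the step lengths behave like $k^{-1/\ga}$, whose sum converges because $\ga<1$; with your iteration as written there is no guarantee of reaching $T$.

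The fix requires an extra ingredient that you have not supplied: either an a priori bound on $\sup_{t}\norm{y_t}_{\cb_\ka}$ for any solution (which does follow from the affine structure of the estimates, applied on macro-intervals of length depending only on $\norm{x}_\ga$, and yields at most geometric growth of the norm), combined with a maximal-interval/blow-up alternative so that the local fixed point with a step size calibrated to this a priori bound can be iterated finitely many times; or a carefully tuned shrinking-interval scheme whose lengths sum beyond $T$, in the spirit of the proof of Proposition \ref{prop:contr-sys} (note that there the argument leans on the regularizing operator $L$ to make $\cn[L(f(y));\cac^0(I;\cb_{2+\ga})]$ bounded independently of $y$, a simplification you do not have in the Nemytskii setting). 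As it stands, the last paragraph of your proposal asserts exactly the point that needs proof, and this is precisely the delicate feature of these equations that Section \ref{sec:smoothness-density} of the paper is designed to circumvent.
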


\smallskip

\noindent
As a preliminary step towards Malliavin differentiability of the solution to \eqref{eq-gene-fbm}, we shall study the dependence on $x$ of the deterministic equation (\ref{eq-nem}).

\subsection{Differentiability with respect to driving noise}
For equation (\ref{eq-nem}), consider the application
\begin{equation}\label{eq:def-Phi}
\Phi: \cac^\ga \to \cacha^{0,\ka}(\cb_\ka), 
\qquad x \mapsto y,
\end{equation}
for a given initial condition $\vp$. We shall elaborate on the strategy designed in \cite{nua-sau} in order to differentiate $\Phi$. Let us start with a lemma on linear equations:

\begin{lemma}\label{lem:exi-eq-li}
Suppose that $(x,y) \in \cac^\ga \times \cacha^{0,\ka}(\cb_\ka)$ and fix $t_0\in [0,T]$. Then for every $w \in \cacha^{0,\ka}([t_0,T];\cb_\ka)$, the equation
\begin{equation}\label{eq:lin-eq-nemytskii}
v_t=w_t+\int_{t_0}^t S_{t-u}(f_i'(y_u) \cdot v_u) \, dx^i_u \quad , \quad t\in [t_0,T],
\end{equation}
admits a unique solution $v\in \cacha^{0,\ka}([t_0,T];\cb_\ka)$, and one has
\begin{equation}\label{estim-eq-li}
\cn[v;\cacha^{0,\ka}([t_0,T];\cb_\ka)] \leq C_{x,y,T} \cdot \cn[w;\cacha^{0,\ka}([t_0,T];\cb_\ka)],
\end{equation}
where $C_{x,y,T}:=C(\norm{x}_\ga,\cn[y;\cacha^{0,\ka}(\cb_\ka)],T)$ for some function $C:(\R^+)^3 \to \R^+$ growing with its arguments.
\end{lemma}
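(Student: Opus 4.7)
The plan is to solve \eqref{eq:lin-eq-nemytskii} by a fixed-point argument on a subinterval $I=[t_0,t_1]$ small enough to ensure contractivity, and then to patch the local solutions together on $[t_0,T]$. Concretely, I would introduce the map
$$\Gamma_I:\cacha^{0,\ka}(I;\cb_\ka)\to \cacha^{0,\ka}(I;\cb_\ka),\qquad \Gamma_I(v)_t:=w_t+\int_{t_0}^t S_{t-u}\big(f_i'(y_u)\cdot v_u\big)\,dx^i_u,$$
and show that $\Gamma_I$ is a well-defined strict contraction for $|I|$ small.

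To check well-posedness of $\Gamma_I$, I would apply Proposition \ref{prop:yg-conv-int} to $z_u:=f_i'(y_u)\cdot v_u$ with $\la=\ka$ and a small $\al>0$ chosen so that $\ga+\ka-\al>1$ (possible since $\ga+\ka>1$). The $\cac^0(I;\cb_\ka)$ bound on $z$ follows from the algebra property \eqref{algebra} and the composition estimate \eqref{compos}, giving
$$\cn[z;\cac^0(I;\cb_\ka)]\le c_f\,\big(1+\cn[y;\cac^0(I;\cb_\ka)]\big)\cn[v;\cac^0(I;\cb_\ka)].$$
For the $\cac^\ka(I;\cb_{\ka-\al})$ bound I would decompose
$$f_i'(y_t)v_t-f_i'(y_s)v_s=\big(f_i'(y_t)-f_i'(y_s)\big)v_t+f_i'(y_s)\big(v_t-v_s\big),$$
use the boundedness of $f_i''$ together with the embedding \eqref{lien-der-delha} (Lemma \ref{lem:imbed-cacha}) to control the first summand, and combine \eqref{algebra}, \eqref{compos} with the $\cacha^\ka$ norm of $v$ for the second. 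Plugging this into \eqref{esti-yg} then yields an estimate of the form
$$\cn[\Gamma_I(v)-w;\cacha^\ga(I;\cb_\ka)]\le c\,\norm{x}_\ga\,\big(1+\cn[y;\cacha^{0,\ka}(\cb_\ka)]\big)\cn[v;\cacha^{0,\ka}(I;\cb_\ka)].$$
Using the trivial embedding $\cacha^\ga(I;\cb_\ka)\hookrightarrow \cacha^{0,\ka}(I;\cb_\ka)$ with norm $\lesssim |I|^{\ga-\ka}$ (plus a term from $\|v_{t_0}\|_{\cb_\ka}$), one gets a contraction on $\cacha^{0,\ka}(I;\cb_\ka)$ as soon as $|I|^{\ga-\ka}$ is small compared with $\norm{x}_\ga(1+\cn[y;\cacha^{0,\ka}(\cb_\ka)])$. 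Linearity makes the contraction estimate for $\Gamma_I(v^{(1)})-\Gamma_I(v^{(2)})$ automatic from the bound above applied to $v^{(1)}-v^{(2)}$, with the same threshold on $|I|$.

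The admissible interval length $\tau$ depends only on $\norm{x}_\ga$ and $\cn[y;\cacha^{0,\ka}(\cb_\ka)]$, so I can cover $[t_0,T]$ with $N\lesssim T/\tau$ intervals of length $\tau$ and iterate the local solution scheme to obtain a unique global solution $v\in \cacha^{0,\ka}([t_0,T];\cb_\ka)$. For the quantitative bound \eqref{estim-eq-li}, I would re-apply the same local estimate to express $\cn[v;\cacha^{0,\ka}([t_k,t_{k+1}];\cb_\ka)]$ in terms of $\cn[w;\cacha^{0,\ka}([t_k,t_{k+1}];\cb_\ka)]$ and $\|v_{t_k}\|_{\cb_\ka}$, then chain the inequalities across the $N$ subintervals. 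The resulting bound grows exponentially in $N$, producing a constant of the form $C(\norm{x}_\ga,\cn[y;\cacha^{0,\ka}(\cb_\ka)],T)$ as required.

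The main obstacle is the careful extraction of the Hölder estimate of $f_i'(y)\cdot v$ in $\cb_{\ka-\al}$: combining the product and composition estimates without losing the crucial small factor $|I|^{\ga-\ka}$ forces one to balance which derivative (of $v$ or of $y$) is paid in which space. Once that balance is identified, the fixed-point and patching mechanism is routine and linearity of the equation makes the contraction, rather than a Picard-style nonlinear argument, trivially available.
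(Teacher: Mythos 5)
Your overall architecture (contraction on a short interval, then patching, then chaining the local bounds into a constant that is exponential in the number of subintervals) is the same as the paper's, which delegates existence and uniqueness to the fixed-point argument behind Proposition \ref{prop:exi-uni} and concentrates on the a priori estimate. The genuine gap is in the step you yourself flag as the main obstacle: the claimed control of $\cn[f_i'(y)\cdot v;\cac^\ka(I;\cb_{\ka-\al})]$ for a \emph{small} $\al>0$. A path $v\in\cacha^{0,\ka}(I;\cb_\ka)$ is not $\ka$-H\"older as a $\cb_{\ka-\al}$-valued function: writing $v_t-v_s=(v_t-S_{t-s}v_s)+(S_{t-s}-\Id)v_s$, the semigroup term only yields $\norm{(S_{t-s}-\Id)v_s}_{\cb_{\ka-\al}}\leq c\,\lln t-s\rrn^{\al}\norm{v_s}_{\cb_\ka}$, so the true time-H\"older exponent of $v$ (and of $y$) in $\cb_{\ka-\al}$ is $\al$, not $\ka$; Lemma \ref{lem:imbed-cacha} provides $\ka$-H\"older continuity only into $\cb$. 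Likewise the difference $f_i'(y_t)-f_i'(y_s)$ cannot be estimated in the fractional norm of $\cb_{\ka-\al}$ from $\norm{f_i''}_\infty$ together with \eqref{algebra} and \eqref{compos}: those tools give Lipschitz-type control of compositions only in $\cb$. Consequently, with small $\al$ the hypotheses of Proposition \ref{prop:yg-conv-int} could only be met with time exponent $\al$, and the Young condition ($\ga$ plus time exponent $>1$) fails; your condition $\ga+\ka-\al>1$ is not the relevant one, so the well-posedness/contraction estimate as you set it up does not close.

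The paper resolves exactly this point by taking the opposite extreme $\al=\ka$ in Proposition \ref{prop:yg-conv-int}: all time increments of $z=f_i'(y)\cdot v$ are measured in plain $\cb=L^2$, where \eqref{lien-der-delha}, \eqref{sobol-incl} and the boundedness of $f_i''$ apply, and the factor $\lln I\rrn^{\ka-\al}$ in \eqref{esti-yg} is then simply $1$. The smallness needed for absorption is harvested differently: the integral term belongs to $\cacha^\ga(I;\cb_\ka)$, and measuring it in the weaker $\cacha^\ka(I;\cb_\ka)$ norm produces the factor $\lln I\rrn^{\ga-\ka}$ (this is \eqref{dem-lin}); one then absorbs $c_{x,y}\lln I\rrn^{\ga-\ka}\cn[v;\cacha^{0,\ka}(I;\cb_\ka)]$ for $\lln I\rrn$ small depending only on $\norm{x}_\ga$ and $\cn[y;\cacha^{0,\ka}(\cb_\ka)]$, and patches over $[t_0,T]$. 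If you replace your interpolation choice by this one, the remainder of your plan (local estimate plus patching, yielding $C(\norm{x}_\ga,\cn[y;\cacha^{0,\ka}(\cb_\ka)],T)$) goes through essentially as in the paper.
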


\begin{proof}
The existence and uniqueness of the solution stem from the same fixed-point argument as in the proof of Proposition \ref{prop:exi-uni} (see \cite[Theorem 3.10]{RHE}), and we only focus on the proof of (\ref{estim-eq-li}).

\smallskip

\noindent
Let $I=[\ell_1,\ell_2]$ be a subinterval of $[t_0,T]$. One has, according to Proposition \ref{prop:yg-conv-int},
\begin{multline}\label{dem-lin}
\cn[v;\cacha^\ka(I;\cb_\ka)]\\
 \leq \cn[w;\cacha^\ka(\cb_\ka)]+c\lln I\rrn^{\ga-\ka} \norm{x}_\ga \lcl \cn[f_i'(y) \cdot v;\cac^0(I;\cb_\ka)]+\cn[f_i'(y) \cdot v;\cac^\ka(I;\cb)] \rcl.
\end{multline}
Now, by using successively (\ref{algebra}) and (\ref{compos}), we get
\bean
\cn[f_i'(y) \cdot v;\cac^0(I;\cb_\ka)] &\leq & c \cn[f_i'(y);\cac^0(\cb_\ka)] \cn[v;\cac^0(I;\cb_\ka)]\\
&\leq & c \lcl 1+\cn[y;\cac^0(\cb_\ka)] \rcl \cn[v;\cac^0(I;\cb_\ka)],
\eean
while, owing to (\ref{lien-der-delha}) and (\ref{sobol-incl}),
\bean
\lefteqn{\cn[f_i'(y) \cdot v;\cac^\ka(I;\cb)]}\\
&\leq & \cn[f_i'(y);\cac^\ka(I;\cb)] \cn[v;\cac^0(I;\cb_\infty)]+\cn[f_i'(y);\cac^0(I;\cb_\infty)] \cn[v;\cac^\ka(I;\cb)]\\
&\leq & c \lcl 1+\cn[y;\cacha^{0,\ka}(\cb_\ka)] \rcl \cn[v;\cacha^{0,\ka}(I;\cb_\ka)].
\eean
Going back to (\ref{dem-lin}), these estimates lead to
$$\cn[v;\cacha^\ka(I;\cb_\ka)] \leq \cn[w;\cacha^\ka(\cb_\ka)]+c_{x,y} \lln I\rrn^{\ga-\ka} \cn[v;\cacha^{0,\ka}(I;\cb_\ka)],$$
and hence
$$\cn[v;\cacha^{0,\ka}(I;\cb_\ka)] \leq \norm{v_{\ell_1}}_{\cb_\ka}+c \cn[w;\cacha^\ka(\cb_\ka)]+c_{x,y} \lln I\rrn^{\ga-\ka} \cn[v;\cacha^{0,\ka}(I;\cb_\ka)].$$
Control (\ref{estim-eq-li}) is now easily deduced with a standard patching argument.

\end{proof}

The following lemma on flow-type linear equations will also be technically important for our computations below.

\begin{lemma}\label{lem:flow-Psi}
Fix $(x,y) \in \cac^\ga \times \cacha^{0,\ka}(\cb_\ka)$ and for every $u \in [0,T]$, consider the system of equations
\begin{equation}
\Psi^i_{t,u}=S_{t-u}(f_i(y_u))+\int_u^t S_{t-w}(f_j'(y_w) \cdot \Psi^i_{w,u}) \, dx^j_w \quad , \quad t \in [u,T] \ , \ i \in \{1,\ldots,m\}.
\end{equation}
Then, for every $i\in \{1,\ldots,m\}$ and $t\in [0,T]$, the mapping $u\mapsto \Psi^i_{t,u}$ is continuous from $[0,t]$ to $\cb_\ka$. In particular, for every $\xi\in (0,1)$, $u\mapsto \Psi^i_{t,u}(\xi)$ is a continuous function on $[0,t]$. 
\end{lemma}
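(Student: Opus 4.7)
The plan is to compare the two solutions $\Psi^i_{\cdot,u_1}$ and $\Psi^i_{\cdot,u_2}$ on their common interval of definition, derive a linear equation for the difference, and then invoke the stability estimate of Lemma~\ref{lem:exi-eq-li} with an inhomogeneity whose size can be controlled in terms of $|u_2-u_1|$.

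First, I would note that the existence and uniqueness of $\Psi^i_{\cdot,u}$ on $[u,T]$ is an immediate application of Lemma~\ref{lem:exi-eq-li} with $t_0=u$ and $w_s:=S_{s-u}(f_i(y_u))$. Indeed, the semigroup identity $S_{s-u}=S_{s-r}S_{r-u}$ for $u\leq r\leq s$ guarantees that $w$ has vanishing $\cacha^\ka$-seminorm on $[u,T]$, while $w_s\in\cb_\ka$ for each $s$ by the composition bound (\ref{compos}).

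Now fix $i$ and $t\in[0,T]$, take $0\leq u_1\leq u_2\leq t$, and set $V_s:=\Psi^i_{s,u_1}-\Psi^i_{s,u_2}$ for $s\in[u_2,T]$. Splitting the integral defining $\Psi^i_{s,u_1}$ into its contributions on $[u_1,u_2]$ and $[u_2,s]$, one checks that $V$ satisfies a linear equation of the form (\ref{eq:lin-eq-nemytskii}) on $[u_2,T]$, with inhomogeneous term
\[
W_s=\bigl[S_{s-u_1}(f_i(y_{u_1}))-S_{s-u_2}(f_i(y_{u_2}))\bigr]+\int_{u_1}^{u_2}S_{s-w}(f_j'(y_w)\cdot\Psi^i_{w,u_1})\,dx^j_w.
\]
Applying Lemma~\ref{lem:exi-eq-li} to $V$ then gives $\cn[V;\cacha^{0,\ka}([u_2,T];\cb_\ka)]\leq C_{x,y,T}\,\cn[W;\cacha^{0,\ka}([u_2,T];\cb_\ka)]$, with a constant which is uniform in $u_2\in[0,t]$. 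The problem thus reduces to showing that the norm of $W$ tends to zero as $|u_2-u_1|\to 0$.

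The key observation is that, thanks to the semigroup identity, both contributions in $W$ satisfy $W_s=S_{s-r}W_r$ for every $u_2\leq r\leq s$, so that the $\cacha^\ka$-seminorm of $W$ on $[u_2,T]$ vanishes identically and it is enough to control $\cn[W;\cac^0([u_2,T];\cb_\ka)]$. For the first bracket, factoring $S_{s-u_2}$ out of $S_{s-u_1}$ and using the contraction property of the semigroup on $\cb_\ka$ (Proposition~\ref{prop-sem-sob}), I would bound it by
\[
\|S_{u_2-u_1}(f_i(y_{u_1}))-f_i(y_{u_1})\|_{\cb_\ka}+\|f_i(y_{u_1})-f_i(y_{u_2})\|_{\cb_\ka},
\]
which tends to zero by strong continuity of $S$ on $\cb_\ka$ and by continuity of $u\mapsto f_i(y_u)$ into $\cb_\ka$ (the latter coming from $y\in\cac^0([0,T];\cb_\ka)$ and the composition estimate). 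For the integral contribution, I would factor $S_{s-u_2}$ out and apply the Young convolutional estimate (\ref{esti-yg}) of Proposition~\ref{prop:yg-conv-int} on the short interval $[u_1,u_2]$, obtaining a bound of order $|u_2-u_1|^\ga$.

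Combining these estimates, one gets $\|V_t\|_{\cb_\ka}\leq\cn[V;\cac^0([u_2,T];\cb_\ka)]\to 0$ as $|u_2-u_1|\to 0$, which yields the claimed $\cb_\ka$-continuity of $u\mapsto\Psi^i_{t,u}$. The pointwise continuity assertion then follows at once from the Sobolev embedding $\cb_\ka\subset\cb_\infty=\cac(0,1)$ (\ref{sobol-incl}), valid since $\ka>1/4$. The most delicate point is the application of (\ref{esti-yg}) on $[u_1,u_2]$, where one has to control the $\cac^0$- and $\cac^\ka$-norms of $f_j'(y)\cdot\Psi^i_{\cdot,u_1}$ uniformly in $u_1\in[0,t]$; this in turn requires a mild a~priori bound on $\Psi^i_{\cdot,u_1}$, which is itself a direct consequence of Lemma~\ref{lem:exi-eq-li} applied to the defining equation.
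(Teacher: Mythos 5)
Your argument is correct and is essentially the paper's own proof: you compare $\Psi^i_{\cdot,u_1}$ and $\Psi^i_{\cdot,u_2}$, note that their difference solves a linear equation of type \eqref{eq:lin-eq-nemytskii} whose inhomogeneity is (up to sign) $S_{s-u_2}\bigl(\Psi^i_{u_2,u_2}-\Psi^i_{u_2,u_1}\bigr)$, and conclude via the stability estimate \eqref{estim-eq-li} of Lemma \ref{lem:exi-eq-li} once this term is shown to vanish in $\cb_\ka$ as $u_2-u_1\to 0$. Your handling of the inhomogeneity (semigroup contraction, the Young bound \eqref{esti-yg} on $[u_1,u_2]$, and the uniform a priori bound on $\Psi^i_{\cdot,u_1}$ from Lemma \ref{lem:exi-eq-li}) merely spells out what the paper leaves implicit in its final "it becomes clear" step.
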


\begin{proof}
Let us fix $i\in \{1,\ldots,m\}$, $t\in [0,T]$. For any $0\leq u <v \leq t$, set
$$\Gamma^i_{v,u}(s):=\Psi^i_{s,v}-\Psi^i_{s,u} \quad , \quad s\in [v,T].$$
It is easy to check that $\Gamma^i_{v,u}$ is solution of the equation on $[v,T]$
$$\Gamma^i_{v,u}(s)=S_{s-v}(\Psi^i_{v,v}-\Psi^i_{v,u})+\int_v^s S_{s-w}(f_j'(y_w) \cdot \Gamma^i_{v,u}(w)) \, dx^j_w.$$
Therefore, according to the estimate (\ref{estim-eq-li}),
$$\norm{\Psi^i_{t,v}-\Psi^i_{t,u}}_{\cb_\ka}=\norm{\Gamma^i_{v,u}(t)}_{\cb_\ka} \leq \cn[\Gamma^i_{v,u};\cac^0(|v,T];\cb_\ka)]\leq c_{x,y,T} \norm{\Psi^i_{v,v}-\Psi^i_{v,u}}_{\cb_\ka}.$$ 
Now, observe that
$$\Psi^i_{v,v}-\Psi^i_{v,u}=f_i(y_v)-S_{v-u}(f_i(y_u))+\int_u^v S_{v-w}(f_j'(y_w) \cdot \Psi^i_{w,u}) \, dx^j_w,$$
and since $y\in \cacha^{0,\ka}(\cb_\ka)$, it becomes clear that $\norm{\Psi^i_{v,v}-\Psi^i_{v,u}}_{\cb_\ka} \stackrel{v\to u}{\longrightarrow} 0$.

\end{proof}

We now show how to differentiate a function which is closely related to equation \eqref{eq-nem}.

\begin{lemma}\label{lem:diff-F}
The application $F: \cac^\ga \times \cacha^{0,\ka}(\cb_\ka) \to \cacha^{0,\ka}(\cb_\ka)$ defined by
$$F(x,y)_t:=y_t-S_t \vp-\int_0^t S_{t-u}(f_i(y_u)) \, dx^i_u,$$
is differentiable in the Fréchet sense and denoting by $D_1F$ (resp. $D_2F$) the derivative of $F$ with respect to $x$ (resp. $y$), we obtain
\begin{equation}\label{der-part-1}
D_1F(x,y)(h)_t=-\int_0^t S_{t-u}(f_i(y_u)) \, dh^i_u,
\end{equation}
\begin{equation}\label{der-part-2}
D_2 F(x,y)(v)_t=v_t-\int_0^t S_{t-u}(f_i'(y_u) \cdot v_u) \, dx^i_u.
\end{equation}
Besides, for any $x\in \cac^\ga$, the mapping $D_2 F(x,\Phi(x))$ is a homeomorphism of $\cacha^{0,\ka}(\cb_\ka)$.

\end{lemma}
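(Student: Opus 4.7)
The functional $F$ is affine in its first argument, so $F(x+h,y)-F(x,y)=-\int_0^\cdot S_{\cdot-u}(f_i(y_u))\,dh^i_u$ is already linear in $h$. That this defines a bounded linear map $\cac^\ga\to\cacha^{0,\ka}(\cb_\ka)$ is an application of Proposition~\ref{prop:yg-conv-int} with integrand $z=f_i(y)$: the algebra estimate (\ref{algebra}), the composition estimate (\ref{compos}) and the embedding (\ref{lien-der-delha}) show that $f_i(y)\in\cac^0(\cb_\ka)\cap\cac^\ka(\cb)$ with norms controlled by $\cn[y;\cacha^{0,\ka}(\cb_\ka)]$. This gives (\ref{der-part-1}) and the joint continuity of $D_1F$.

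The substantial work lies in identifying $D_2F$. For $v\in\cacha^{0,\ka}(\cb_\ka)$, the candidate derivative is the linear map $L_{x,y}(v)_t:=v_t-\int_0^t S_{t-u}(f_i'(y_u)\cdot v_u)\,dx^i_u$, and the Taylor integral remainder gives
\[
f_i(y_u+v_u)-f_i(y_u)-f_i'(y_u)\cdot v_u =: R^i_u = \int_0^1 (1-\te)\,f_i''(y_u+\te v_u)\cdot v_u\cdot v_u\,d\te.
\]
Consequently $F(x,y+v)-F(x,y)-L_{x,y}(v)_t=-\int_0^t S_{t-u}(R^i_u)\,dx^i_u$, and a second use of Proposition~\ref{prop:yg-conv-int} with $(\la,\al)=(\ka,\ka)$ reduces everything to proving
\[
\cn[R^i;\cac^0(\cb_\ka)]+\cn[R^i;\cac^\ka(\cb)] = o\bigl(\cn[v;\cacha^{0,\ka}(\cb_\ka)]\bigr).
\]

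This remainder estimate is the main obstacle. The $\cac^0(\cb_\ka)$ part is immediate from the algebra property (\ref{algebra}) and the composition estimate (\ref{compos}) applied to $f_i''$: the resulting bound is of order $(1+\cn[y;\cac^0(\cb_\ka)]+\cn[v;\cac^0(\cb_\ka)])\,\cn[v;\cac^0(\cb_\ka)]^2$, which is quadratic in $\cn[v;\cacha^{0,\ka}(\cb_\ka)]$. For the $\cac^\ka(\cb)$ component I would expand the increment $R^i_t-R^i_s$ into a sum of terms containing $v_t-v_s$, $y_t-y_s$ and increments of $f_i''$, control each occurrence of $f_i''$ or $f_i'''$ in sup-norm by using the embedding (\ref{sobol-incl}), and then convert the $\cac^\ka(\cb)$-norms of $y$ and $v$ into $\cacha^{0,\ka}(\cb_\ka)$-norms through (\ref{lien-der-delha-2}); the net bound is again quadratic in $\cn[v;\cacha^{0,\ka}(\cb_\ka)]$, hence $o$ of the linear term. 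Joint continuity of $D_2F$ in $(x,y)$ follows by the same type of estimates applied to the Lipschitz differences $f_i'(y+\tilde v)-f_i'(y)$ and $x-\tilde x$.

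Finally, the homeomorphism assertion reduces to Lemma~\ref{lem:exi-eq-li}. Indeed $D_2F(x,\Phi(x))$ is a bounded linear operator on $\cacha^{0,\ka}(\cb_\ka)$ by the previous step, and inverting it amounts to solving, for each $w\in\cacha^{0,\ka}(\cb_\ka)$, the equation
\[
v_t=w_t+\int_0^t S_{t-u}(f_i'(\Phi(x)_u)\cdot v_u)\,dx^i_u,\quad t\in[0,T],
\]
which is precisely (\ref{eq:lin-eq-nemytskii}) with $y=\Phi(x)$ and $t_0=0$. Lemma~\ref{lem:exi-eq-li} produces a unique solution together with the bound (\ref{estim-eq-li}), so $D_2F(x,\Phi(x))$ is a continuous linear bijection with continuous inverse, i.e.\ a homeomorphism of $\cacha^{0,\ka}(\cb_\ka)$.
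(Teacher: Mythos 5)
Your argument is correct and uses the same ingredients as the paper: the Taylor remainder in $y$ is bounded through \eqref{algebra}, \eqref{compos}, \eqref{sobol-incl}, \eqref{lien-der-delha-2} and the convolutional Young estimate \eqref{esti-yg}, and the homeomorphism claim is reduced, exactly as in the paper, to Lemma \ref{lem:exi-eq-li} with $t_0=0$ (existence, uniqueness and the bound \eqref{estim-eq-li} give a bounded two-sided inverse). The only point where you organize things differently is the passage from the two partial expansions to joint Fr\'echet differentiability. The paper expands $F(x+h,y+v)-F(x,y)$ in both arguments at once and explicitly bounds the cross term $R^2_t(h,v)=\int_0^t S_{t-s}\bigl(\int_0^1 f_i'(y_s+rv_s)\cdot v_s\,dr\bigr)\,dh^i_s$ by $O(\norm{h}_\ga\,\cn[v;\cacha^{0,\ka}(\cb_\ka)])$, which directly yields a remainder that is $o$ of $\bigl(\norm{h}_\ga^2+\cn[v;\cacha^{0,\ka}(\cb_\ka)]^2\bigr)^{1/2}$. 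You instead compute the two partial derivatives and invoke joint continuity of $D_1F$ and $D_2F$; this is legitimate (continuous partial derivatives imply Fr\'echet differentiability), but the continuity claim is exactly the same cross-term estimate in disguise, so you should either state that classical criterion explicitly or, more economically, insert the $R^2$ bound as the paper does -- it costs one line given the machinery you have already set up, since $f_i'$ is Lipschitz under Hypothesis \ref{hyp:young-evol-eq}. With that point made explicit, your proof is complete and equivalent to the paper's.
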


\begin{proof}
One has, for every $h\in \cac^\ga,v\in \cacha^{0,\ka}(\cb_\ka)$,
\begin{multline}\label{demo:diff}
F(x+h,y+v)_t-F(x,y)_t=\\
v_t-\int_0^t S_{t-u}(f_i'(y_u) \cdot v_u) \, dx^i_u-\int_0^t S_{t-u}(f_i(y_u)) \, dh^i_u-\lc R^1_t(v)+R^2_t(h,v)\rc,
\end{multline}
with
$$R^1_t(v):=\int_0^t S_{t-s} z^i_s \, dx^i_s \quad , \quad z^i_s:=\int_0^1 dr \int_0^1 dr' \, r \, f_i''(y_s+rr' v_s) \cdot v_s^2,$$
$$R^2_t(v,h):=\int_0^t S_{t-s} \tilde{z}^i_s \, dh^i_s \quad , \quad \tilde{z}^i_s:=\int_0^1 dr \, f_i'(y_s+rv_s) \cdot v_s,$$
and we now have to show that 
$$\cn[R^1_.(v)+R^2_.(h,v);\cacha^{0,\ka}(\cb_\ka)]=o\lp \lc \norm{h}_\ga^2+\cn[v;\cacha^{0,\ka}(\cb_\ka)]^2\rc^{1/2}\rp.$$

\smallskip

\noindent
Observe first that $\cn[R^1_.(v);\cacha^{0,\ka}(\cb_\ka)]\leq c \cn[R^1_.(v);\cacha^\ka(\cb_\ka)]$. Thanks to (\ref{algebra}) and (\ref{compos}), we get
$$\norm{z_s}_{\cb_\ka} \leq c \iint_{[0,1]^2} dr dr' \, \norm{f_i''(y_s+rr' v_s)}_{\cb_\ka} \norm{v_s}_{\cb_\ka}^2 \leq c \lcl 1+\norm{y_s}_{\cb_\ka}+\norm{v_s}_{\cb_\ka}\rcl \norm{v_s}_{\cb_\ka}^2.$$
Besides, owing to (\ref{sobol-incl}) and (\ref{lien-der-delha}) and setting $M_{ts}(r,r')=f_i''(y_t+rr' v_t)-f_i''(y_s+rr' v_s)$ we end up with
\begin{align*}
&\norm{z_t-z_s}_\cb \ \leq \ \iint_{[0,1]^2} dr dr' \,
\norm{M_{ts}(r,r')}_\cb \, \norm{v_s}^2_{\cb_\infty}+c \norm{v_t-v_s}_\cb \lcl \norm{v_t}_{\cb_\infty}+\norm{v_s}_{\cb_\infty} \rcl\\
&\leq  c \lcl \norm{y_t-y_s}_\cb+\norm{v_t-v_s}_\cb \rcl \norm{v_s}^2_{\cb_\ka}+c \norm{v_t-v_s}_\cb \lcl \norm{v_t}_{\cb_\ka}+\norm{v_s}_{\cb_\ka} \rcl.\\
&\leq  c \lln t-s \rrn^\ka \bigg\{ \lp \cn[y;\cacha^{0,\ka}(\cb_\ka)]+\cn[v;\cacha^{0,\ka}(\cb_\ka)] \rp \cn[v;\cac^0(\cb_\ka)]^2\\
&  \hspace{9cm}+\cn[v;\cacha^{0,\ka}(\cb_\ka)] \cn[v;\cac^0(\cb_\ka)] \bigg\}.
\end{align*}
The estimate (\ref{esti-yg}) for the Young convolutional integral now provides us with the expected control $\cn[R^1_.(v);\cacha^{0,\ka}(\cb_\ka)]=O( \cn[v;\cacha^{0,\ka}(\cb_\ka)]^2)$. In the same way, one can show that $\cn[R^2_.(h,v);\cacha^{0,\ka}(\cb_\ka)]=O( \norm{h}_\ga \cdot \cn[v;\cacha^{0,\ka}(\cb_\ka)])$, and the differentiability of $F$ is thus proved.

\smallskip

\noindent
Of course, the two expressions (\ref{der-part-1}) and (\ref{der-part-2}) for the partial derivatives are now easy to derive from (\ref{demo:diff}). As for the bijectivity of $D_2 F(x,\Phi(x))$, it is a consequence of Lemma~\ref{lem:exi-eq-li}.

\end{proof}

We are now ready to differentiate the application $\Phi$ defined by \eqref{eq:def-Phi}:

\begin{proposition}\label{prop:deriv-Phi}
The map $\Phi: \cac^\ga \to \cacha^{0,\ka}(\cb_\ka)$ is differentiable in the Fréchet sense. Moreover, for every $x\in \cac^\ga$ and $h\in \cac^\infty$, the following representation holds: if $t\in [0,T],\xi \in (0,1)$,
\begin{equation}\label{repres-diff}
D \Phi(x)(h)_t(\xi)=\int_0^t \Psi^i_{t,u}(\xi) \, dh^i_u,
\end{equation}
where $\Psi^i_{t,.}\in \cac([0,t];\cb_\ka)$ is defined through the equation
\begin{equation}
\Psi^i_{t,u}=S_{t-u}(f_i(\Phi(x)_u))+\int_u^t S_{t-w}(f_j'(\Phi(x)_w) \cdot \Psi^i_{w,u}) \, dx^j_w.
\end{equation}
\end{proposition}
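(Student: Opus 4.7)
The plan is to apply an implicit-function-theorem argument based on Lemma~\ref{lem:diff-F}. By definition of $\Phi$, one has $F(x,\Phi(x))=0$. Since $F$ is Fréchet differentiable and $D_2 F(x,\Phi(x))$ is a homeomorphism of $\cacha^{0,\ka}(\cb_\ka)$, the implicit function theorem on Banach spaces immediately yields that $\Phi$ is Fréchet differentiable with
\begin{equation*}
D\Phi(x)(h)=-\bigl[D_2 F(x,\Phi(x))\bigr]^{-1}\bigl[D_1 F(x,\Phi(x))(h)\bigr].
\end{equation*}
In view of the explicit expressions \eqref{der-part-1}--\eqref{der-part-2}, the element $v:=D\Phi(x)(h)\in\cacha^{0,\ka}(\cb_\ka)$ is therefore characterized as the unique solution of the linear equation
\begin{equation*}
v_t=\int_0^t S_{t-u}(f_i(\Phi(x)_u))\,dh^i_u+\int_0^t S_{t-u}(f_i'(\Phi(x)_u)\cdot v_u)\,dx^i_u,
\end{equation*}
uniqueness being ensured by Lemma~\ref{lem:exi-eq-li}.

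To obtain the representation \eqref{repres-diff}, I set $w_t(\xi):=\int_0^t\Psi^i_{t,u}(\xi)\,dh^i_u$. This is a well-defined Riemann--Stieltjes integral: for $h\in\cac^\infty$ the measure $dh^i_u=\dot h^i_u\,du$ is absolutely continuous, while $u\mapsto\Psi^i_{t,u}(\xi)$ is continuous on $[0,t]$ by Lemma~\ref{lem:flow-Psi}. Using the uniform-in-$u$ bound on $\Psi^i_{\cdot,u}$ provided by \eqref{estim-eq-li}, one checks that $w$ actually lies in $\cacha^{0,\ka}(\cb_\ka)$. The strategy is to verify that $w$ satisfies the same linear equation as $v$, so that $w=v$ by the uniqueness just quoted.

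Substituting the defining equation for $\Psi^i_{t,u}$ into the integrand yields
\begin{align*}
w_t(\xi)&=\int_0^t S_{t-u}(f_i(\Phi(x)_u))(\xi)\,dh^i_u\\
&\quad+\int_0^t\left[\int_u^t S_{t-s}(f_j'(\Phi(x)_s)\cdot\Psi^i_{s,u})(\xi)\,dx^j_s\right]dh^i_u.
\end{align*}
The crucial step is a Fubini-type interchange in the second term. Swapping the order of integration, and then using the linearity of $S_{t-s}$ and of pointwise multiplication by $f_j'(\Phi(x)_s)$ to carry the $u$-integral inside, one arrives at $\int_0^t S_{t-s}\bigl(f_j'(\Phi(x)_s)\cdot w_s\bigr)(\xi)\,dx^j_s$, which is precisely the remaining term in the linear equation satisfied by $v$. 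Combining with the first term then gives the sought identity for $w$.

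The main obstacle is justifying this Fubini exchange, since $\int\,dx^j_s$ is a Young integral (only conditionally convergent) whereas $\int\,du$ is a standard Lebesgue integral. The smoothness of $h$ is what makes the argument work: being absolutely continuous, $dh^i_u=\dot h^i_u\,du$ allows us to approximate the Young integral by its convolutional Riemann sums of Proposition~\ref{prop:yg-conv-int}, interchange them with the Lebesgue integral in $u$ (trivial for finite sums), and pass to the limit by invoking \eqref{estim-eq-li} to control $\|\Psi^i_{s,u}\|_{\cb_\ka}$ uniformly in $u\in[0,s]$ together with the continuity statement of Lemma~\ref{lem:flow-Psi}. All other steps are direct linearity and substitution arguments.
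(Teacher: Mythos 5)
Your argument is correct and follows essentially the same route as the paper: Fréchet differentiability of $\Phi$ via the implicit function theorem applied to $F$ from Lemma~\ref{lem:diff-F}, identification of $D\Phi(x)(h)$ as the unique solution of the linear equation \eqref{eq-prop-diff}, and then a Fubini-type interchange (justified via Riemann-sum approximation, using Lemma~\ref{lem:flow-Psi} and the bound \eqref{estim-eq-li}) to show that $\int_0^t \Psi^i_{t,u}\,dh^i_u$ solves the same equation. The only difference is that you spell out the Fubini justification, which the paper simply delegates to the corresponding argument in \cite[Proposition 4]{nua-sau}.
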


\begin{proof}
Thanks to Lemma \ref{lem:diff-F}, the differentiability of $\Phi$ is a consequence of the implicit function theorem, which gives in addition
$$D \Phi (x)=-D_2 F(x,\Phi(x))^{-1} \circ D_1 F(x,\Phi(x)), \quad x\in \cac^\ga.$$
In particular, for every $x,h\in \cac^\ga$, $z:=D\Phi (x)(h)$ is the (unique) solution of the equation
\begin{equation}\label{eq-prop-diff}
z_t=\int_0^t S_{t-u}(f_i(\Phi(x)_u)) \, dh^i_u+\int_0^t S_{t-u}(f_i'(\Phi(x)_u) \cdot z_u) \, dx^i_u, \quad t\in [0,T].
\end{equation}
If $x\in \cac^\ga$ and $h\in \cac^\infty$, an application of the Fubini theorem shows (as in the proof of \cite[Proposition 4]{nua-sau}) that the path $\tilde{z}_t:=\int_0^t \Psi^i_{t,u} \, dh^i_u$ (which is well-defined thanks to Lemma \ref{lem:flow-Psi}) is also solution of (\ref{eq-prop-diff}), and this provides us with the identification (\ref{repres-diff}).

\end{proof}

As the reader might expect, one can obtain derivatives of any order for the solution when the coefficients of equation  \eqref{eq-nem} are smooth:
\begin{proposition}\label{prop:diff-ordre-supe}
Suppose that $f_i \in \cac^{\infty,\textbf{b}}(\R;\R)$ for every $i\in \{1,\ldots,m\}$. Then the function $\Phi: \cac^\ga \to \cacha^{0,\ka}(\cb_\ka)$ defined by \eqref{eq:def-Phi} is infinitely differentiable in the Fréchet sense. Moreover, for every $n\in \N^\ast$ and every $x,h_1,\ldots, h_n \in \cac^\ga$, the path $z_t:=D^n \Phi(x)(h_1,\ldots,h_n)$ satisfies a linear equation of the form
\begin{equation}\label{eq:diff-ordre-sup}
z_t=w_t+\int_0^t S_{t-u}(f_i'(\Phi(x)_u) \cdot z_u) \, dx^i_u \quad , \quad t\in [0,T],
\end{equation}
where $w\in \cacha^{0,\ka}(\cb_\ka)$ only depends on $x,h_1,\ldots,h_n$.
\end{proposition}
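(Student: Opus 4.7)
\medskip
\noindent
\textbf{Proof plan.}
I would proceed by induction on $n$, the base case $n=1$ being Proposition~\ref{prop:deriv-Phi}. The cleanest route is to upgrade Lemma~\ref{lem:diff-F} by showing that the map $F:\cac^\gamma \times \cacha^{0,\kappa}(\cb_\kappa) \to \cacha^{0,\kappa}(\cb_\kappa)$ is of class $\cac^\infty$ in the Fr\'echet sense whenever $f_i \in \cac^{\infty,\textbf{b}}(\R;\R)$. The Young convolution $(x,z) \mapsto \int_0^\cdot S_{\cdot-u} z_u \, dx^i_u$ is bilinear and bounded by Proposition~\ref{prop:yg-conv-int}, hence smooth, so the entire issue reduces to the $\cac^\infty$-smoothness of the Nemytskii map $y \mapsto f_i(y)$ on $\cacha^{0,\kappa}(\cb_\kappa)$, with $k$-th differential at $y$ acting as $(v_1,\ldots,v_k) \mapsto f_i^{(k)}(y) \cdot v_1 \cdots v_k$. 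The Taylor remainder estimates needed to verify this would be obtained by iterating the arguments already used for $R^1$ and $R^2$ in the proof of Lemma~\ref{lem:diff-F}, using the Banach algebra property~(\ref{algebra}), the composition estimate~(\ref{compos}), the embedding~(\ref{lien-der-delha}) and the semigroup H\"older bound~(\ref{regu-hold-semi}).

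\medskip
\noindent
Once $F$ is $\cac^\infty$ and $D_2 F(x,\Phi(x))$ is a homeomorphism of $\cacha^{0,\kappa}(\cb_\kappa)$ (Lemma~\ref{lem:diff-F}), the Banach-space implicit function theorem will directly yield that $\Phi$ is $\cac^\infty$. To extract the structure~(\ref{eq:diff-ordre-sup}) for $D^n\Phi$, one differentiates the identity $F(x,\Phi(x))=0$ exactly $n$ times in directions $h_1,\ldots,h_n$ via Fa\`a di Bruno's formula; sorting the resulting terms according to whether they contain $D^n\Phi(x)(h_1,\ldots,h_n)$ or not, and inserting the explicit expressions~(\ref{der-part-1})--(\ref{der-part-2}), one arrives at
\begin{equation*}
z_t = w_t + \int_0^t S_{t-u}\bigl(f'_i(\Phi(x)_u) \cdot z_u\bigr) \, dx^i_u,
\end{equation*}
where $w$ is polynomial in $\Phi(x)$, the lower-order derivatives $D^k \Phi(x)$ with $k<n$, the quantities $f_i^{(j)}(\Phi(x))$ for $j\leq n$, and Young convolutional integrals against $x$ and the $h_j$'s. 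The inductive hypothesis combined with Proposition~\ref{prop:yg-conv-int} ensures $w \in \cacha^{0,\kappa}(\cb_\kappa)$, and unique solvability in this space is then Lemma~\ref{lem:exi-eq-li}.

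\medskip
\noindent
\textbf{Main obstacle.} The delicate point is the $\cac^\infty$-smoothness of the Nemytskii map on $\cacha^{0,\kappa}(\cb_\kappa)$: the seminorm of this space involves the semigroup $S_{t-s}$, while $S_{t-s}f(y_s)\ne f(S_{t-s}y_s)$ in general, so controlling $\|f(y_t)-S_{t-s}f(y_s)\|_{\cb_\kappa}$ and its higher Taylor increments forces the splitting
\begin{equation*}
f(y_t)-S_{t-s}f(y_s) \;=\; \bigl[f(y_t)-f(y_s)\bigr]+\bigl[(\Id-S_{t-s})f(y_s)\bigr],
\end{equation*}
handled respectively through~(\ref{algebra})--(\ref{compos}) and through~(\ref{regu-hold-semi}). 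A more pedestrian alternative that bypasses the implicit function theorem would be: assuming inductively that $D^{n-1}\Phi(x)(h_1,\ldots,h_{n-1})$ satisfies~(\ref{eq:diff-ordre-sup}), differentiate that equation once more with respect to $x$ in a direction $h_n$, solve the resulting linear system via Lemma~\ref{lem:exi-eq-li}, and verify that the resulting candidate is the genuine Fr\'echet derivative of $D^{n-1}\Phi$ by Taylor-type remainder estimates analogous to those in the proof of Lemma~\ref{lem:diff-F}.
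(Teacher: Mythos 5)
Your proposal is correct and follows essentially the same route as the paper, which simply defers to the argument of \cite[Proposition 5]{nua-sau}: smoothness of $F$ via the $\cac^{\infty,\textbf{b}}$ bounds on the $f_i$, the implicit function theorem together with the invertibility of $D_2F(x,\Phi(x))$ from Lemma~\ref{lem:diff-F}, and identification of the linear equation \eqref{eq:diff-ordre-sup} for $D^n\Phi$ with $w$ built from lower-order derivatives (your description of $w$ matches the explicit $n=2$ formula the paper records). The only remark is that the Nemytskii map need only be shown smooth into $\cac^0(\cb_\ka)\cap\cac^\ka(\cb)$, since the hat-seminorm of the convolutional integral is then supplied by \eqref{esti-yg}, which slightly simplifies the ``main obstacle'' you describe.
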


\begin{proof}
The details of this proof are omitted for the sake of conciseness, since they simply mimic the formulae contained in the proof of \cite[Proposition 5]{nua-sau}. As an example, let us just observe that for  $x,h,k \in \cac^\ga$, the path $z_t:=D^2 \Phi(x)(h,k)_t$ is the unique solution of~(\ref{eq:diff-ordre-sup}) with
\begin{multline*}
w_t:=\int_0^t S_{t-u}(f_i'(\Phi(x)_u) \cdot D\Phi(x)(h)_u) \, dk^i_u+\int_0^t S_{t-u}(f_i'(\Phi(x)_u) \cdot D\Phi(x)(k)_u) \, dh^i_u\\
+\int_0^t S_{t-u}(f_i''(\Phi(x)_u) \cdot D\Phi(x)(h)_u \cdot D\Phi(x)(k)_u) \, dx^i_u.
\end{multline*}

\end{proof}

\subsection{Existence of the density}\label{subsec:exi-density}

We will now apply the results of the previous section to an evolution equation driven by a fractional Brownian motion $B=(B^1,\ldots,B^d)$ with Hurst parameter $H>1/2$. Namely, we fix $\ka \in (\max(1/4,1-\ga),1/2)$ and an initial condition $\vp \in \cb_\ka$. We also assume that $f_i \in \cac^{3,\textbf{b}}(\R;\R)$ for $i=1,\ldots,m$.
We denote by $Y=\Phi(B)$ the solution of
\begin{equation}\label{eq:evol-fbm-nemytskii}
Y_t=S_t \vp+\int_0^t S_{t-u}(f_i(Y_u)) \, dB^i_u \quad , \quad t\in [0,T].
\end{equation}
Notice that since $H>1/2$ the paths of $B$ are almost surely $\ga$-H\"older continuous with H\"older exponent greater than $1/2$. Thus, equation \eqref{eq:evol-fbm-nemytskii} can be solved by a direct application of  Proposition \ref{prop:exi-uni}. Moreover, one can invoke Proposition~\ref{prop:deriv-Phi} in order to obtain the Malliavin differentiability of $Y_t(\xi)$:

\begin{lemma}
For every $t\in [0,T], \xi \in (0,1)$, $Y_t(\xi) \in \mathbb{D}^{1,2}_{\rm loc}$ and one has, for any $h\in \ch$,
\begin{equation}\label{ident-der-mal}
\lla \mathcal{D} (Y_t(\xi)), h \rra_{\ch}=D \Phi(B)(\mathcal{R}_H h)_t(\xi).
\end{equation}
\end{lemma}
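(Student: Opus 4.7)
The plan is to deduce this lemma essentially immediately from the transfer principle between Fr\'echet and Malliavin differentiability recorded in the Kusuoka-type proposition of Section 2.3, combined with the Fr\'echet differentiability of $\Phi$ just established in Proposition \ref{prop:deriv-Phi}. So the whole content of the argument is to verify that $Y_t(\xi)$ satisfies the $\ch$-differentiability condition.

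First, I would fix $\omega \in \Omega$ in the full-measure set on which the canonical path $B(\omega)$ is $\ga$-H\"older continuous (this set has full measure since $H>1/2>\ga$). Recall that $\crr_H h$ lies in $\cac^H \subset \cac^\ga$ for every $h\in\ch$, so for any such $\omega$ and any real $\nu$, the perturbation $\omega+\nu\crr_H h$ still belongs to $\cac^\ga$. Hence the map
\[
\R \longrightarrow \cacha^{0,\ka}(\cb_\ka), \qquad \nu \longmapsto \Phi(\omega+\nu\crr_H h)
\]
is the composition of an affine map into $\cac^\ga$ with the Fr\'echet-differentiable map $\Phi$, and is therefore differentiable with derivative $D\Phi(\omega)(\crr_H h)\in \cacha^{0,\ka}(\cb_\ka)$ at $\nu=0$.

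Next I would argue that the evaluation functional $\ell_{t,\xi}:y\mapsto y_t(\xi)$ is a bounded linear form on $\cacha^{0,\ka}(\cb_\ka)$. Indeed, by definition of the norm on $\cacha^{0,\ka}(\cb_\ka)$ one has $\|y_t\|_{\cb_\ka} \leq \cn[y;\cacha^{0,\ka}(\cb_\ka)]$, and the Sobolev embedding $\cb_\ka \subset \cb_\infty = \cac(0,1)$ (which applies since $\ka>1/4$, see \eqref{sobol-incl}) gives
\[
|y_t(\xi)| \leq \|y_t\|_{\cb_\infty} \leq c\,\|y_t\|_{\cb_\ka}\leq c\,\cn[y;\cacha^{0,\ka}(\cb_\ka)].
\]
Composing the chain of differentiable maps, the real-valued map $\nu\mapsto Y_t(\xi)(\omega+\nu\crr_H h) = \Phi(\omega+\nu\crr_H h)_t(\xi)$ is differentiable with derivative
\[
\tfrac{d}{d\nu}\Big|_{\nu=0} Y_t(\xi)(\omega+\nu\crr_H h) = D\Phi(\omega)(\crr_H h)_t(\xi).
\]

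Finally, applying the Kusuoka criterion (the proposition containing formula \eqref{eq:rel-malliavin-pathwise-deriv}) to $F=Y_t(\xi)$ yields $Y_t(\xi)\in\D^{1,p}_{\rm loc}$ for every $p>1$ (in particular for $p=2$) together with the identification
\[
\lla \cd(Y_t(\xi)),h\rra_\ch = D\Phi(B)(\crr_H h)_t(\xi),
\]
which is exactly \eqref{ident-der-mal}. There is no real obstacle to this proof; the only point worth checking carefully is the continuity of the evaluation $\ell_{t,\xi}$, which is why the hypothesis $\ka>1/4$ (inherited from Hypothesis \ref{hyp:young-evol-eq}) is crucial.
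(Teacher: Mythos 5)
Your argument is correct and follows essentially the same route as the paper: Kusuoka's criterion \eqref{eq:rel-malliavin-pathwise-deriv} combined with the Fr\'echet differentiability of $\Phi$ from Proposition \ref{prop:deriv-Phi}, with the pointwise evaluation $y\mapsto y_t(\xi)$ controlled through the embedding \eqref{sobol-incl} ($\ka>1/4$), which is exactly how the paper bounds its remainder term $R(\ep\,\crr_H h)_t(\xi)=o(\ep)$. One small slip in your justification: the almost sure $\ga$-H\"older regularity of $B$ and the inclusion $\crr_H h\in\cac^\ga$ hold because $\ga<H$ (here $1/2<\ga<H$), not because ``$H>1/2>\ga$''.
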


\begin{proof}
According to \eqref{eq:rel-malliavin-pathwise-deriv}, we have that
$$\lla \mathcal{D} (Y_t(\xi)), h \rra_{\ch}=D (Y_t(\xi))(\mathcal{R}_H h)=\frac{d}{d\ep}_{|\ep=0} \Phi(B+\ep \mathcal{R}_H h)_t(\xi).$$
Furthermore, Proposition \ref{prop:deriv-Phi} asserts that $\Phi:\cac^\ga \to \cacha^{0,\ka}(\cb_\ka)$ is differentiable. Therefore
$$\frac{1}{\ep} \lc \Phi(x+\ep \mathcal{R}_H h)_t(\xi)-\Phi(x)_t(\xi) \rc=D \Phi (x)(\mathcal{R}_H h)_t(\xi)+\frac{1}{\ep} R(\ep \mathcal{R}_H h)_t(\xi),$$
with
$$\lln R(\ep  \mathcal{R}_H h)_t(\xi)\rrn  \leq \cn[R(\ep \mathcal{R}_H h);\cac^0(\cb_\infty)]\leq c \cn[R(\ep \mathcal{R}_H h);\cac^0(\cb_\ka)]=o(\ep),$$
and hence $\frac{d}{d\ep}_{|\ep=0} \Phi(B+\ep \mathcal{R}_H h)_t(\xi)=D \Phi (x)(\mathcal{R}_H h)_t(\xi)$, which trivially yields both the inclusion $Y_t(\xi) \in \mathbb{D}^{1,2}_{\rm loc}$ and expression \eqref{ident-der-mal}.

\end{proof}

With this differentiation result in hand plus some non degeneracy assumptions, we now obtain the existence of a density for the random variable $Y_t(\xi)$:
\begin{theorem}\label{theo-exi}
Suppose that for all $\la \in \R$, there exists $i\in \{1,\ldots,d\}$ such that $f_i(\la)\neq 0$. Then for all $t\in (0,1]$ and $\xi \in (0,1)$, the law of $Y_t(\xi)$ is absolutely continuous with respect to Lebesgue measure.
\end{theorem}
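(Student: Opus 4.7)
The plan is to invoke the first density criterion in Proposition~\ref{prop:density-criterions}. Since the previous Lemma already places $Y_t(\xi)$ in $\D^{1,2}_{\text{loc}}$, the whole issue reduces to showing that $\norm{\cd(Y_t(\xi))}_\ch>0$ almost surely, which I would attack by contradiction on the measurable set $A:=\{\omega:\norm{\cd(Y_t(\xi))}_\ch=0\}$.

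The first step is to convert identity~\eqref{ident-der-mal}, combined with the pathwise representation~\eqref{repres-diff}, into a formula adapted to the Cameron--Martin structure of $B$. Writing $\crr_H=\ck_H\circ K_H^*$ and using the Volterra representation $(\crr_H h)^i_u=\int_0^u K(u,s)[K_H^*h]^i(s)\,ds$, a Fubini-type interchange between the Young integral in~\eqref{repres-diff} and the $L^2$-integration hidden inside $\ck_H$ should yield, for every $h\in\ch$,
$$\lla \cd(Y_t(\xi)),h\rra_\ch \,=\, \lla K_H^*\bigl[\Psi^{\cdot}_{t,\cdot}(\xi)\,\one_{[0,t]}\bigr],\,K_H^* h\rra_{L^2([0,T];\R^d)}.$$
Since $K_H^*$ is an isometry between $\ch$ and $L^2([0,T];\R^d)$, this identifies $\cd(Y_t(\xi))$ inside $\ch$ with the path $s\mapsto\bigl(\Psi^1_{t,s}(\xi),\ldots,\Psi^d_{t,s}(\xi)\bigr)\one_{[0,t]}(s)$, and by injectivity of $K_H^*$ the vanishing of the $\ch$-norm forces $\Psi^i_{t,u}(\xi)=0$ for almost every $u\in[0,t]$ and every $i\in\{1,\ldots,d\}$.

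The second step is to study the behaviour of $\Psi^i_{t,u}$ near the diagonal $u=t$. Starting from its defining equation $\Psi^i_{t,u}=S_{t-u}(f_i(Y_u))+\int_u^t S_{t-w}(f_j'(Y_w)\Psi^j_{w,u})\,dB^j_w$, the first term converges to $f_i(Y_t)$ in $\cb_\ka$ thanks to $Y\in\cacha^{0,\ka}(\cb_\ka)$ and the semigroup properties~\eqref{regu-prop-semi}--\eqref{regu-hold-semi}, while the Young convolutional integral on $[u,t]$ is controlled by a quantity of order $(t-u)^{\ga-\ka}$ via~\eqref{esti-yg} and hence vanishes as $u\uparrow t$. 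The Sobolev embedding~\eqref{sobol-incl} then yields the pointwise convergence $\Psi^i_{t,u}(\xi)\to f_i(Y_t(\xi))$. Combining this with the continuity in $u$ provided by Lemma~\ref{lem:flow-Psi}, any $\omega\in A$ must satisfy $\Psi^i_{t,u}(\xi)(\omega)=0$ for every $u\in[0,t]$, and in particular $f_i(Y_t(\xi))(\omega)=0$ for all $i\in\{1,\ldots,d\}$. This contradicts the non-degeneracy hypothesis applied to $\lambda=Y_t(\xi)(\omega)$, so $\bp(A)=0$ and Proposition~\ref{prop:density-criterions}(i) delivers the density.

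The main technical obstacle I expect lies in the first step: rigorously turning the pathwise identity~\eqref{repres-diff} into the $K_H^*$-symmetric expression above. For $H>1/2$ the space $\ch$ is known to contain all bounded continuous functions, so the membership of $\Psi^{\cdot}_{t,\cdot}(\xi)\one_{[0,t]}$ in $\ch$ poses no real problem, but the kernel $K(\cdot,\cdot)$ is singular on the diagonal and the Young integral against $d(\crr_H h)$ has to be handled with care. A natural route is to approximate $h$ by smooth step functions of the form $\sum_k a_k \one_{[s_k,s_{k+1}]}e_i$, for which $\crr_H h$ becomes a finite sum of $R_H(\cdot,s_k)$ and everything reduces to ordinary Lebesgue integrals and the classical Fubini theorem, before extending to general $h\in\ch$ by density and continuity of the involved operations.
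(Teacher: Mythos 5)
Your argument is correct and follows the same overall skeleton as the paper's proof (criterion (i) of Proposition \ref{prop:density-criterions}, reduction to the vanishing of $\Psi^i_{t,\cdot}(\xi)$, evaluation at the diagonal to contradict the non-degeneracy of the $f_i$'s), but the way you pass from $\norm{\cd(Y_t(\xi))}_\ch=0$ to $\Psi^i_{t,u}(\xi)=0$ is genuinely different and noticeably heavier. The paper never identifies $\cd(Y_t(\xi))$ as a function of $s$ inside $\ch$: it simply observes that the vanishing of the norm kills all inner products, so by \eqref{ident-der-mal} one has $D\Phi(B)(\crr_H h)_t(\xi)=0$ for every $h\in\ch$, and then tests \eqref{repres-diff} against \emph{smooth} $h$ only, obtaining $\int_0^t \Psi^i_{t,u}(\xi)\,dh^i_u=0$; the continuity of $u\mapsto\Psi^i_{t,u}(\xi)$ from Lemma \ref{lem:flow-Psi} then forces $\Psi^i_{t,u}(\xi)\equiv 0$ on $[0,t]$, and $\Psi^i_{t,t}(\xi)=f_i(Y_t(\xi))$ is read off directly from the defining equation (no diagonal limit via semigroup estimates is needed, so your second step is harmless but redundant). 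Your route instead establishes the full identification $\cd^i_s(Y_t(\xi))=\Psi^i_{t,s}(\xi)\one_{[0,t]}(s)$ through the $K_H^*$ isometry and a Fubini interchange; this is exactly what the paper does later, in Section \ref{sec:smoothness-density} (the corollary giving \eqref{expr-der-mal}), but there it is backed by Proposition \ref{prop:flow-psi}, which gives $\ga$-H\"older regularity of $u\mapsto\Psi^i_{t,u}(\xi)$ so that the Young pairing with a general $\ga$-H\"older path $\crr_H h$ makes sense. In the Nemytskii setting of Theorem \ref{theo-exi} you only have continuity of $\Psi^i_{t,\cdot}(\xi)$, so the pairing against $d(\crr_H h)$ for arbitrary $h\in\ch$ is not directly defined; your step-function approximation (for which $\crr_H h$ is $C^1$, so the integral is an ordinary Stieltjes one, followed by density in $\ch$ and the fact that bounded continuous functions embed injectively into $\ch$ when $H>1/2$) does repair this, but it is precisely the technical burden that the paper's weak-formulation shortcut avoids. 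In short: your proof buys the stronger pointwise formula for the Malliavin derivative, at the cost of extra work that the existence-of-density statement does not require.
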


\begin{proof}
We apply Proposition \ref{prop:density-criterions} part (i), and we will thus prove that $\norm{\mathcal{D}(Y_t(\xi))}_{\ch}>0$ almost surely. Assume then that $\norm{\mathcal{D}(Y_t(\xi))}_{\ch}=0$. In this case, owing to (\ref{ident-der-mal}), we have $D \Phi (B)(\mathcal{R}_H h)_t(\xi)=0$ for every $h\in \ch$. In particular, due to (\ref{repres-diff}), one has $\int_0^t \Psi^i_{t,u}(\xi) \, dh^i_u=0$ for every $h\in \cac^\infty$. As $u\mapsto \Psi^i_{t,u}(\xi)$ is known to be continuous, it is easily deduced that $\Psi^i_{t,u}(\xi)=0$ for every $u\in [0,t]$ and every $i\in \{1,\ldots,d\}$, and so $0=\Psi^i_{t,t}(\xi)=f_i(Y_t(\xi))$ for every $i\in \{1,\ldots d\}$, which contradicts our non-vanishing hypothesis.

\end{proof}

\section{Smoothness of the density in the case of regularizing vector fields}\label{sec:smoothness-density}

Up to now, we have been able to differentiate the solution to \eqref{eq-nem} when the coefficients are fairly general Nemytskii operators. However, we have only obtained the inclusion $Y_t(\xi)\in\mathbb{D}^{1,2}_{\rm loc}$. Additional problems arise when one tries to prove $Y_t(\xi)\in\mathbb{D}^{1,2}$, due to bad behavior of linear equations driven by rough signals in terms of moment estimates. This is why we shall change our setting here, and consider an equation of the following type
\begin{equation}\label{equa-regu}
y_t =S_t \vp+\int_0^t S_{t-u} (L(f_i(y_u))) \, dx^i_u \quad , \quad t\in [0,T] \quad , \quad \vp \in \cb,
\end{equation}
where $x\in \cac^\ga([0,T];\R^d)$ with $\ga >1/2$, each $f_i:\R \to \R$ ($i\in \{1,\ldots, d\}$) is seen as a Nemytskii operator (see the beginning of Section \ref{sec:existence-density}), and $L$ stands for a regularizing linear operator of $\cb$. Let us be more specific about the assumptions in this section:

\begin{hypothesis}\label{hyp:regularized-young-eq}
We assume that for every $i\in \{1,\ldots, d\}$, $f_i$ is infinitely differentiable with bounded derivatives. Moreover, the operator $L:\cb \to \cb$ is taken of the form
$$L(\phi)(\xi):=\int_0^1 d\eta \, U(\xi,\eta) \phi(\eta),$$
for some positive kernel $U$ such that: $(i)$ $U$ is regularizing, i.e., $L$ is continuous from $\cb$ to $\cb_\la$ for every $\la \geq 0$, and $(ii)$ one has $c_U:=\min_{\xi \in (0,1)} \int_0^1 d\eta \, U(\xi,\eta) >0$. 
\end{hypothesis}

In other words, we are now concerned with the following equation on $[0,T] \times (0,1)$:
$$y(t,\xi)=\int_0^1 G_t(\xi,\eta) \vp(\eta) \, d\eta+\int_0^1\int_0^1 \int_0^t G_{t-u}(\xi,\eta)U(\eta,\mu) f_i(y(u,\mu)) \, dx^i_u d\mu d\eta, $$
with $U$ satisfying the above conditions $(i)$-$(ii)$.

\smallskip

This setting covers for instance the case of an (additional) heat kernel $U=G_\ep$ on $(0,1)$ for any fixed $\ep >0$. The following existence and uniqueness result then holds true:

\begin{proposition}\label{exi-cas-regu}
Under Hypothesis \ref{hyp:regularized-young-eq}, for any $\la \geq \ga$ and any initial condition $\vp\in \cb_\la$, Equation (\ref{equa-regu}) interpreted with Proposition \ref{prop:yg-conv-int} admits a unique solution in $\cacha^\ga(\cb_\la)$.
\end{proposition}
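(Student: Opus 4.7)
The plan is to set up a standard Banach fixed-point argument for a map $\Gamma$ on $\cacha^\ga(I;\cb_\la)$ over a sufficiently small subinterval $I \subset [0,T]$, and then to patch finitely many such local solutions together to cover $[0,T]$.

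Fix $\la \geq \ga$ and $\vp \in \cb_\la$. Given an initial datum $y_{\ell_1} \in \cb_\la$ on a subinterval $I = [\ell_1,\ell_2]$, I would introduce
$$\Gamma(y)_t := S_{t-\ell_1} y_{\ell_1} + \int_{\ell_1}^t S_{t-u}(L(f_i(y_u))) \, dx_u^i, \quad t \in I.$$
To invoke Proposition \ref{prop:yg-conv-int} I would choose its parameters as $\ka = \ga$ (so that $\ka+\ga = 2\ga > 1$) and some $\al \in (0,\ga)$ (admissible since $\la \geq \ga > \al$), and then verify that the integrand $z_u := L(f_i(y_u))$ lies in $\cac^0(I;\cb_\la) \cap \cac^\ga(I;\cb_{\la-\al})$.

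This is where Hypothesis \ref{hyp:regularized-young-eq} is decisive: since $L$ maps $\cb$ continuously into $\cb_{\la'}$ for every $\la' \geq 0$, and each $f_i$ is bounded with bounded derivatives,
$$\norm{L(f_i(y_u))}_{\cb_\la} \leq c \norm{f_i}_\infty, \qquad \norm{L(f_i(y_t))-L(f_i(y_s))}_{\cb_{\la-\al}} \leq c \norm{f_i'}_\infty \norm{y_t-y_s}_\cb,$$
while Lemma \ref{lem:imbed-cacha} (with $\la \geq \ga$) converts the $\cacha^\ga$-regularity of $y$ in $\cb_\la$ into a $\ga$-H\"older control of $y$ in $\cb$. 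Feeding these into estimate (\ref{esti-yg}) should yield a bound of the form
$$\cn[\Gamma(y);\cacha^\ga(I;\cb_\la)] \leq c_1 + c_2 \, \lln I \rrn^{\ga-\al} \bigl(1+\cn[y;\cacha^\ga(I;\cb_\la)]+\cn[y;\cac^0(I;\cb_\la)]\bigr),$$
where $c_1,c_2$ depend only on $\norm{x}_\ga$, $c_U$, $\norm{f_i}_\infty$, $\norm{f_i'}_\infty$, not on $y_{\ell_1}$.

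An entirely analogous computation on $\Gamma(y)-\Gamma(\tilde y)$, exploiting the identity $f_i(y)-f_i(\tilde y) = (y-\tilde y)\int_0^1 f_i'(y+r(\tilde y-y)) \, dr$ together with the boundedness of $f_i',f_i''$, should produce a contraction estimate carrying the same prefactor $\lln I \rrn^{\ga-\al}$. Choosing $\lln I \rrn$ small, $\Gamma$ then contracts a closed ball of $\cacha^\ga(I;\cb_\la)$, and Banach's fixed-point theorem delivers a unique local solution. Since the smallness threshold on $\lln I \rrn$ depends only on $\ga,\la,\norm{x}_\ga$, $c_U$ and $\norm{f_i}_{\cac^{1,\textbf{b}}}$ (not on $y_{\ell_1}$), one iterates this construction on finitely many identical pieces, gluing at each breakpoint thanks to the $\cac^0(I;\cb_\la)$-continuity of the local solutions.

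The one step I would regard as the true obstacle, in contrast to the Nemytskii setting of Proposition \ref{prop:exi-uni}, is the handling of the nonlinearity $f_i(y)$ at the $\cb_\la$-level: the composition estimate (\ref{compos}) is restricted to $\la < 1/2$, which would forbid the full range $\la \geq \ga$ claimed in the statement. The regularizing operator $L$ from Hypothesis \ref{hyp:regularized-young-eq} is precisely the device that bypasses this difficulty: $f_i(y)$ needs only to be measured in the rough space $\cb$, and $L$ then lifts the result back into $\cb_\la$ for an arbitrary $\la \geq \ga$.
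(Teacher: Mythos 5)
Your proposal is correct and follows essentially the same route as the paper: a fixed-point argument based on Proposition \ref{prop:yg-conv-int}, where the decisive point is exactly the one you single out, namely that $L$ allows one to measure $f_i(y)$ only in $\cb$ (using $\norm{f_i}_\infty$, $\norm{f_i'}_\infty$ and the embedding of Lemma \ref{lem:imbed-cacha}) and then lift into $\cb_\la$, thereby bypassing the restriction $\la<1/2$ of the composition estimate (\ref{compos}). The only cosmetic difference is that the paper applies Proposition \ref{prop:yg-conv-int} with $\al=0$ (so the small factor is $\lln I\rrn^\ga$ rather than your $\lln I\rrn^{\ga-\al}$), which changes nothing in substance.
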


\begin{proof}
As in the proof of Proposition \ref{prop:exi-uni}, the result can be obtained with a fixed-point argument. Observe indeed that if $y\in \cacha^\ga(I;\cb_\la)$ ($I:=[\ell_1,\ell_2] \subset [0,1]$) and $z$ is the path defined by $z_{\ell_1}=y_{\ell_1}$, $z_t-S_{t-s}z_s=\int_s^t S_{t-u}(L(f_i(y_u))) \, dx^i_u$ ($s<t\in I$), then, according to Proposition \ref{prop:yg-conv-int}, $z\in \cacha^\ga(I;\cb_\la)$ and one has
\begin{equation}\label{dem:exi-cas-regu}
\cn[z;\cacha^\ga(I;\cb_\la)] \leq c \norm{x}_\ga \lcl \cn[L(f(y));\cac^0(I;\cb_\la^m)]+\lln I\rrn^\ga \cn[L(f(y));\cac^\ga(I;\cb_\la^m)] \rcl.
\end{equation}
Now, owing to the regularizing effect of $L$, it follows that $\cn[L(f(y));\cac^0(I;\cb_\la^m)] \leq \norm{L}_{\cl(\cb,\cb_\la)} \norm{f}_\infty$ and
$$\cn[L(f(y));\cac^\ga(I;\cb_\la^m)] \leq \norm{L}_{\cl(\cb,\cb_\la)} \norm{f'}_\infty \cn[y;\cac^\ga(I;\cb)] \leq c \cn[y;\cacha^{0,\ga}(I;\cb_\la)],$$
which, together with (\ref{dem:exi-cas-regu}), allows to settle the fixed-point argument.
\end{proof}

\

For the sake of clarity, we henceforth assume that $T=1$. The generalization to any (fixed) horizon $T>0$ easily follows from slight modifications of our estimates.

\smallskip

Moreover, for some technical reasons that will arise in the proofs of Propositions \ref{prop:contr-sys} and \ref{prop:eq-lin-reg}, we will focus on the case $\la=2+\ga$ in the statement of Proposition \ref{exi-cas-regu}. In other words, from now on, \emph{we fix the initial condition $\vp$ in the space $\cb_{2+\ga}$}.

\subsection{Estimates on the solution}
\label{sec:estim-solution}

Under our new setting, let us find an appropriate polynomial control on the solution to (\ref{equa-regu}) in terms of $x$.

\begin{proposition}\label{prop:contr-sys}
Suppose that $y$ is the solution of (\ref{equa-regu}) in $\cacha^\ga(\cb_{2+\ga})$ with initial condition $\vp$. Then there exists a constant $C_{\ga,f,L}$ such that
\begin{equation}\label{estim-poly}
\cn[y;\cacha^\ga([0,1];\cb_{2+\ga})] \leq C_{\ga,f,L} \lp 1+\norm{x}_\ga \rp\lp \max\lp \norm{x}_\ga^{1/\ga},\norm{\vp}_{\cb_{2+\ga}}^{1/2} \rp\rp^{1-\ga}.
\end{equation}
\end{proposition}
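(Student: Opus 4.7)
The plan is to localize the Young estimate of Proposition \ref{prop:yg-conv-int} on subintervals $I=[\ell_1,\ell_2] \subset [0,1]$ of common length $h$, accumulate the local bounds into a global one by telescoping, and finally choose $h$ to balance the contributions of $\|x\|_\ga$ and $\|\vp\|_{\cb_{2+\ga}}$.

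First, on a subinterval $I$, I would apply Proposition \ref{prop:yg-conv-int} to $z_u = L(f_i(y_u))$ with parameters $\la = 2+\ga$, $\kappa = \ga$, $\alpha = 0$. Writing $M_I := \cn[y;\cacha^\ga(I;\cb_{2+\ga})]$, this gives
\[
M_I \leq c\|x\|_\ga \left( \cn[Lf(y);\cac^0(I;\cb_{2+\ga})] + h^\ga \cn[Lf(y);\cac^\ga(I;\cb_{2+\ga})] \right).
\]
The regularizing property of $L$ together with the boundedness of $f_i$ controls the $\cac^0$ norm by a constant, while $\cn[Lf(y);\cac^\ga(I;\cb_{2+\ga})] \leq c\|L\|_{\cl(\cb,\cb_{2+\ga})}\|f'\|_\infty \cn[y;\cac^\ga(I;\cb)]$. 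Lemma \ref{lem:imbed-cacha} with $\la = 2+\ga$ bounds $\cn[y;\cac^\ga(I;\cb)] \leq M_I + c h^2 \cn[y;\cac^0(I;\cb_{2+\ga})]$, and the contraction of $S_t$ on $\cb_{2+\ga}$ yields $\cn[y;\cac^0(I;\cb_{2+\ga})] \leq \|y_{\ell_1}\|_{\cb_{2+\ga}} + h^\ga M_I$. Combining:
\[
M_I \leq c\|x\|_\ga + c\|x\|_\ga h^\ga M_I + c\|x\|_\ga h^{2+\ga} \|y_{\ell_1}\|_{\cb_{2+\ga}}.
\]
Choosing $h$ small enough (e.g.\ $h \leq (2c\|x\|_\ga)^{-1/\ga} \wedge 1$) so that $c\|x\|_\ga h^\ga \leq 1/2$ absorbs the $M_I$ term on the right, leaving $M_I \leq c(\|x\|_\ga + h^2 \|y_{\ell_1}\|_{\cb_{2+\ga}})$.

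Next, I would iterate this across $N \asymp 1/h$ consecutive subintervals. Setting $Y_k := \|y_{kh}\|_{\cb_{2+\ga}}$ and using $Y_{k+1} \leq Y_k + h^\ga M_{I_k}$, one obtains a discrete inequality of the type $Y_{k+1} \leq Y_k(1 + ch^{2+\ga}) + c\|x\|_\ga h^\ga$, from which a standard Gronwall-type summation gives $\sup_k Y_k \leq c(\vp_{2+\ga} + \|x\|_\ga h^{\ga - 1})$ (the exponential factor $(1+ch^{2+\ga})^N \leq e^{ch^{1+\ga}}$ remaining bounded). To translate the local bounds on $M_{I_k}$ into a bound on the global $\cacha^\ga([0,1];\cb_{2+\ga})$ seminorm, one decomposes $R_{t,s} := y_t - S_{t-s}y_s$ telescopically across the mesh, $R_{t,s} = R_{t,t_M} + \sum_{k=m+1}^M S_{t-t_k}R_{t_k,t_{k-1}} + S_{t-t_{m+1}}R_{t_{m+1},s}$, applies $\cb_{2+\ga}$-contraction of $S_t$ on each summand, and counts at most $1 + (t-s)/h$ terms. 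This produces
\[
\cn[y;\cacha^\ga([0,1];\cb_{2+\ga})] \leq c\, h^{\ga-1} \sup_k M_{I_k}.
\]

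Finally, I would plug the bounds on $M_{I_k}$ and $\sup_k Y_k$ into this estimate and optimize the choice of $h$. The natural selection $h^{-1} = 1 + \|x\|_\ga^{1/\ga} \vee \|\vp\|_{\cb_{2+\ga}}^{1/2}$ balances the smallness conditions (making $\|x\|_\ga h^\ga$ and $h^2 \|\vp\|_{\cb_{2+\ga}}$ both bounded by a constant) and yields $h^{\ga-1} = (\|x\|_\ga^{1/\ga} \vee \|\vp\|_{\cb_{2+\ga}}^{1/2})^{1-\ga}$, leading to the desired inequality \eqref{estim-poly} after multiplying by the factor $\|x\|_\ga$.

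The main obstacle is the careful bookkeeping of the exponents of $h$, $\|x\|_\ga$, and $\|\vp\|_{\cb_{2+\ga}}$ through the iterative patching, especially to keep the optimal exponent $\tfrac12$ on $\|\vp\|_{\cb_{2+\ga}}$: the dissipation factor $|I|^{2+\ga}$ arising from Lemma \ref{lem:imbed-cacha} and contraction of $S_t$ would otherwise produce a suboptimal exponent, and one must be precise about the choice of $h$ depending \emph{simultaneously} on both $\|x\|_\ga$ and $\|\vp\|_{\cb_{2+\ga}}$ rather than on $\|x\|_\ga$ alone.
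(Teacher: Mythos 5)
Your argument is correct and reaches \eqref{estim-poly} by the same overall strategy as the paper: localize the Young estimate \eqref{esti-yg} on small intervals, track $\norm{y}_{\cb_{2+\ga}}$ at the gridpoints, patch the local $\hcac^\ga$ bounds at the cost of a factor $(\text{number of intervals})^{1-\ga}$, and choose the mesh of size comparable to $\max(\norm{x}_\ga^{1/\ga},\norm{\vp}_{\cb_{2+\ga}}^{1/2})^{-1}$, which is exactly where the exponents $1/\ga$, $1/2$ and $1-\ga$ come from in both proofs. The genuine difference is the subdivision device: the paper takes intervals of decreasing lengths $\ep_k=1/(N+k)$ and propagates the induction hypothesis $\ep_k^2\norm{y_{\ell_k}}_{\cb_{2+\ga}}\le 1$, so the local bound is uniformly $2c_1\norm{x}_\ga+1$ and no exponential bookkeeping ever appears; you take a uniform mesh $h$ and instead bound $\sup_k\norm{y_{kh}}_{\cb_{2+\ga}}\le c(\norm{\vp}_{\cb_{2+\ga}}+\norm{x}_\ga h^{\ga-1})$ by a discrete Gronwall argument, which works because the accumulated growth $\norm{x}_\ga h^{\ga-1}$ re-enters the local estimate only through the prefactor $h^{2+\ga}$ (giving the harmless $\norm{x}_\ga h^{1+\ga}$) and the factor $(1+ch^{2+\ga})^{1/h}$ stays bounded; your route is arguably more transparent, the paper's avoids the Gronwall step altogether. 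One small point to tidy up: with $h^{-1}=1+\norm{x}_\ga^{1/\ga}\vee\norm{\vp}_{\cb_{2+\ga}}^{1/2}$ you get $h^{\ga-1}=(1+\max(\cdots))^{1-\ga}$ rather than $\max(\cdots)^{1-\ga}$; this only matters when both norms are small, it is the same imprecision as the paper's ``$N$ proportional to $\max(\cdots)$'' with $N\ge 1$ an integer, and it is repaired by observing that your local bounds actually carry an overall factor $\norm{x}_\ga$, so that for $\max(\cdots)\le 1$ the left-hand side is $O(\norm{x}_\ga)=O(\max(\cdots)^{1-\ga})$ because $\ga>1/2$.
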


\begin{proof}
For any $N\in \N^\ast$, let us introduce the two sequences
$$\ep_k=\ep_{N,k}:=\frac{1}{N+k} \quad , \quad \ell_0:=0 \ , \ \ell_{k+1}=\ell^N_{k+1}:=\ell^N_k+\ep_{N,k}.$$
The first step of the proof consists in showing that we can pick $N$ such that for every $k$, 
\begin{equation}\label{cont-ep}
\ep^2_k \norm{y_{\ell_k}}_{\cb_{2+\ga}} \leq 1.
\end{equation}
For the latter control to hold at time $0$ (i.e., for $k=0$), we must first assume that $N\geq \norm{\vp}_{\cb_{2+\ga}}^{1/2}$. Now, observe that for any $k$, one has, owing to (\ref{esti-yg}),
\begin{eqnarray}
\lefteqn{\cn[y;\cacha^\ga([\ell_k,\ell_{k+1}];\cb_{2+\ga})]} \nonumber\\
&\leq & c \norm{x}_\ga \lcl \cn[L(f(y));\cac^0([\ell_k,\ell_{k+1}];\cb_{2+\ga}^m)]+\ep_k^\ga \cn[L(f(y));\cac^\ga([\ell_k,\ell_{k+1}];\cb_{2+\ga}^m)] \rcl \label{esti-regula-int}\\
&\leq & c \norm{x}_\ga \norm{L}_{\cl(\cb,\cb_{2+\ga})} \lcl 1+\ep_k^\ga \cn[y;\cac^\ga([\ell_k,\ell_{k+1}];\cb)] \rcl \nonumber\\
&\leq & c \norm{x}_\ga \lcl 1+\ep_k^\ga \cn[y;\cacha^\ga([\ell_k,\ell_{k+1}];\cb_{2+\ga})]+\ep_k^{\ga+2} \cn[y;\cac^0([\ell_k,\ell_{k+1}];\cb_{2+\ga})] \rcl\nonumber\\
&\leq & c_1 \norm{x}_\ga \lcl 1+\ep_k^\ga \cn[y;\cacha^\ga([\ell_k,\ell_{k+1}];\cb_{2+\ga})]+\ep_k^{\ga+2} \norm{y_{\ell_k}}_{\cb_{2+\ga}}\rcl,\nonumber
\end{eqnarray}
where we have used (\ref{lien-der-delha-2}) to get the third inequality. Consequently, if we take $N$ such that $2c_1 N^{-\ga} \norm{x}_\ga \leq 1$ (i.e. $N \geq (2c_1 \norm{x}_\ga)^{1/\ga}$), we retrieve
\begin{equation}\label{demo-regularise}
\cn[y;\cacha^\ga([\ell_k,\ell_{k+1}];\cb_{2+\ga})] \leq 2 c_1 \norm{x}_\ga+\ep_k^2 \norm{y_{\ell_k}}_{\cb_{2+\ga}}
\end{equation}
and hence
$$\norm{y_{\ell_{k+1}}}_{\cb_{2+\ga}} \leq 1+(1+\ep_k^{2+\ga}) \norm{y_{\ell_k}}_{\cb_{2+\ga}}.$$
From this estimate, if we assume that $\ep_k^2 \norm{y_{\ell_k}}_{\cb_{2+\ga}} \leq 1$, then 
$$\ep_{k+1}^2 \norm{y_{\ell_{k+1}}}_{\cb_{2+\ga}} \leq \ep_{k+1}^2+\ep_{k+1}^2 \lcl \norm{y_{\ell_k}}_{\cb_{2+\ga}}+\ep_k^\ga \rcl\leq 2\ep_{k+1}^2+\frac{\ep_{k+1}^2}{\ep_k^2}=\frac{2+(N+k)^2}{(N+k+1)^2} \leq 1$$
and (\ref{cont-ep}) is thus proved by induction. Going back to (\ref{demo-regularise}), we get, for every $k$,
\begin{equation}\label{unif-cont}
\cn[y;\cacha^\ga([\ell_k,\ell_{k+1}];\cb_{2+\ga})] \leq 2c_1 \norm{x}_\ga+1.
\end{equation}
By a standard patching argument, this estimate yields
$$\cn[y;\cacha^\ga([0,1];\cb_{2+\ga})] \leq \lcl 2c_1 \norm{x}_\ga+1 \rcl K^{1-\ga},$$
where $K$ stands for the smallest integer such that $\sum_{k=0}^K \ep_k\geq 1$. 

Finally, observe that $2\geq \sum_{k=0}^K \ep_k =\sum_{k=N}^{N+K} \frac{1}{k}$, and thus one can check that $K\leq (e^2-1)N \leq 7 N$. To achieve the proof of (\ref{estim-poly}), it now suffices to notice that $N$ can be picked proportional to $\max( \norm{x}_\ga^{1/\ga},\norm{\vp}_{\cb_{2+\ga}}^{1/2} )$.
\end{proof}

We now consider a linear equation, which is equivalent to \eqref{eq:lin-eq-nemytskii} in our regularized context:
\begin{equation}\label{eq-li-regul}
z_t=w_t+\int_0^t S_{t-u}(L(f_i'(y_u) \cdot z_u)) \, dx^i_u \quad , \quad t\in [0,1],
\end{equation}
where $w\in \cacha^{\ga}(\cb_{2+\ga})$ and $y$ stands for the solution of (\ref{equa-regu}) with initial condition $\vp \in \cb_{2+\ga}$.

\smallskip

The existence and uniqueness of a solution for (\ref{eq-li-regul}) can be proved along the same lines as Proposition \ref{exi-cas-regu}, that is to say via a fixed-point argument. We shall get a suitable exponential control for this solution.

\begin{proposition}\label{prop:eq-lin-reg}
There exists constants $C_1,C_2$ which only depend on $f$, $L$ and $\ga$ such that
\begin{equation}\label{estim-eq-lin}
\cn[z;\cac^0([0,1];\cb_{2+\ga})] \leq C_1 \cn[w;\cacha^{0,\ga}(\cb_{2+\ga})] \exp \lp C_2 \max \lp \norm{\vp}_{\cb_{2+\ga}}^{1/2},\norm{x}_\ga^{1/\ga} \rp \rp.
\end{equation}
Moreover, if $w_t=S_t \psi$ for some function $\psi \in \cb_{2+\ga}$, there exists an additional constant $C_3$ which only depend on $f$, $L$ and $\ga$ such that
\begin{multline}\label{estim-eq-lin-2}
\cn[z;\cacha^\ga([0,1];\cb_{2+\ga})]\\
 \leq C_3 \norm{\psi}_{\cb_{2+\ga}} \max \lp \norm{\vp}_{\cb_{2+\ga}}^{1/2},\norm{x}_\ga^{1/\ga} \rp \exp \lp C_2 \max \lp \norm{\vp}_{\cb_{2+\ga}}^{1/2},\norm{x}_\ga^{1/\ga} \rp \rp
\end{multline}
\end{proposition}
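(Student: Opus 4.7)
The strategy mirrors that of Proposition \ref{prop:contr-sys}: I work with the very same partition $0=\ell_0 < \cdots < \ell_K = 1$, $\ep_k = (N+k)^{-1}$, with $N$ of order $\max(\norm{\vp}_{\cb_{2+\ga}}^{1/2}, \norm{x}_\ga^{1/\ga})$ and $K \leq 7N$, so that the a priori estimates $\norm{y_{\ell_k}}_{\cb_{2+\ga}} \leq \ep_k^{-2}$ and $\cn[y;\cacha^\ga(I_k;\cb_{2+\ga})] \leq c(1+\norm{x}_\ga)$ on each $I_k := [\ell_k,\ell_{k+1}]$ come for free. On each $I_k$ I will derive a local H\"older bound for $z$, convert it into a scalar recursion for $M_k := \norm{z_{\ell_k}}_{\cb_{2+\ga}}$, iterate, and then patch.

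On $I_k$, applying (\ref{esti-yg}) to the convolutional integral term in the equation for $z$, and exploiting the regularizing effect of $L$ (which maps $\cb \to \cb_{2+\ga}$) together with (\ref{algebra}), (\ref{sobol-incl}) and (\ref{lien-der-delha-2}) to control $\cn[f'(y)\cdot z;\cac^\ga(I_k;\cb)]$, produces an inequality of the form
\begin{equation*}
\cn[z;\cacha^\ga(I_k;\cb_{2+\ga})] \leq \cn[w;\cacha^\ga(I_k;\cb_{2+\ga})] + c\norm{x}_\ga \cn[z;\cac^0(I_k;\cb_{2+\ga})] + c\norm{x}_\ga \ep_k^\ga \bigl[\text{H\"older-type terms}\bigr],
\end{equation*}
where the a priori bounds on $y$ reduce the bracket to a constant multiple of $M_k + \cn[z;\cacha^\ga(I_k;\cb_{2+\ga})]$. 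Combining this with the elementary inequality $\cn[z;\cac^0(I_k;\cb_{2+\ga})] \leq M_k + \ep_k^\ga \cn[z;\cacha^\ga(I_k;\cb_{2+\ga})]$ (a direct consequence of contractivity of $S$ on $\cb_{2+\ga}$), I can absorb all $\cn[z;\cacha^\ga(I_k;\cb_{2+\ga})]$ contributions into the left-hand side, provided $c\norm{x}_\ga \ep_k^\ga \leq 1/2$; this is exactly what the lower bound $N \gtrsim \norm{x}_\ga^{1/\ga}$ secures. The outcome is the clean estimate
\begin{equation*}
\cn[z;\cacha^\ga(I_k;\cb_{2+\ga})] \leq C\bigl\{\cn[w;\cacha^\ga(I_k;\cb_{2+\ga})] + \norm{x}_\ga M_k\bigr\}.
\end{equation*}

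Together with $M_{k+1} \leq M_k + \ep_k^\ga\, \cn[z;\cacha^\ga(I_k;\cb_{2+\ga})]$, this yields the affine recursion $M_{k+1} \leq (1+C\norm{x}_\ga \ep_k^\ga)M_k + C\ep_k^\ga\cn[w;\cacha^\ga(\cb_{2+\ga})]$, whose iteration gives
\begin{equation*}
\max_k M_k \leq \exp\bigl(C\norm{x}_\ga \textstyle\sum_k \ep_k^\ga\bigr) \Bigl\{M_0 + C\cn[w;\cacha^\ga(\cb_{2+\ga})] \textstyle\sum_k \ep_k^\ga\Bigr\}.
\end{equation*}
The key arithmetic point is that $\sum_{k=0}^{K-1}\ep_k^\ga \leq cN^{1-\ga}$ (since $K\leq 7N$), hence $\norm{x}_\ga \sum_k\ep_k^\ga \lesssim \norm{x}_\ga N^{1-\ga} \lesssim \max(\norm{x}_\ga^{1/\ga}, \norm{\vp}_{\cb_{2+\ga}}^{1/2})$ in both regimes of $N$, while the residual $N^{1-\ga}$ prefactor is absorbed into the exponential. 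Together with $M_0 = \norm{w_0}_{\cb_{2+\ga}} \leq \cn[w;\cacha^{0,\ga}(\cb_{2+\ga})]$ and an elementary interpolation $\norm{z_t}_{\cb_{2+\ga}} \leq M_k + \ep_k^\ga R_k$ inside each $I_k$, this settles (\ref{estim-eq-lin}).

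For (\ref{estim-eq-lin-2}), the special choice $w_t = S_t\psi$ gives $\cn[w;\cacha^\ga(I_k;\cb_{2+\ga})] = 0$ and $M_0 = \norm{\psi}_{\cb_{2+\ga}}$, so the previous step produces $R_k := \cn[z;\cacha^\ga(I_k;\cb_{2+\ga})] \leq C\norm{x}_\ga \norm{\psi}_{\cb_{2+\ga}} \exp(C'\max(\cdots))$ uniformly in $k$. To patch these into a global $\cacha^\ga$-bound, I expand, for $s \in I_j,\ t \in I_k$ with $j<k$,
\begin{equation*}
z_t - S_{t-s}z_s = (z_t - S_{t-\ell_k}z_{\ell_k}) + \sum_{l=j+1}^{k-1} S_{t-\ell_{l+1}}\bigl(z_{\ell_{l+1}} - S_{\ep_l}z_{\ell_l}\bigr) + S_{t-\ell_{j+1}}\bigl(z_{\ell_{j+1}} - S_{\ell_{j+1}-s}z_s\bigr),
\end{equation*}
take $\cb_{2+\ga}$-norms via contractivity of $S$, and bound each piece by $R^*(t-\ell_k)^\ga$, $R^*\ep_l^\ga$ and $R^*(\ell_{j+1}-s)^\ga$ respectively. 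The Jensen bound $\sum_l \ep_l^\ga \leq K^{1-\ga}(\sum_l \ep_l)^\ga \leq K^{1-\ga}(t-s)^\ga$ then produces an overall factor of order $K^{1-\ga}$; since $\norm{x}_\ga K^{1-\ga} \lesssim N^\ga \cdot N^{1-\ga} = N \lesssim \max(\norm{x}_\ga^{1/\ga}, \norm{\vp}_{\cb_{2+\ga}}^{1/2})$, this delivers (\ref{estim-eq-lin-2}).

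The main obstacle is the mandatory recourse to the partition: the $\cn[z;\cac^0]$ contribution to (\ref{esti-yg}) carries the factor $\norm{x}_\ga$ but \emph{no} compensating power of $|I|$, so on a single interval the absorption cannot close when $\norm{x}_\ga$ is large. Splitting $[0,1]$ into subintervals on which $\ep_k^\ga \norm{x}_\ga$ is small is what makes the absorption work, and the resulting iteration is precisely what produces the exponential factor $\exp(C\max(\norm{x}_\ga^{1/\ga}, \norm{\vp}_{\cb_{2+\ga}}^{1/2}))$ in the statement.
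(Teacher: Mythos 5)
Your proposal is correct and follows essentially the same route as the paper: the same partition $\ep_k=(N+k)^{-1}$ with $N\sim\max(\norm{\vp}_{\cb_{2+\ga}}^{1/2},\norm{x}_\ga^{1/\ga})$ and the a priori bounds of Proposition \ref{prop:contr-sys}, a local estimate on each $[\ell_k,\ell_{k+1}]$ via (\ref{esti-yg}) with absorption secured by $\ep_k^\ga\norm{x}_\ga\lesssim 1$, an iteration over the $K\lesssim N$ intervals yielding the exponential factor, and a patching with the $K^{1-\ga}$ loss for (\ref{estim-eq-lin-2}). The only differences are cosmetic bookkeeping: you run a Gronwall-type recursion $M_{k+1}\leq(1+C\norm{x}_\ga\ep_k^\ga)M_k+C\ep_k^\ga\cn_w$ where the paper uses the doubling bound $\cn[z;\cac^0(I_k)]\leq 2\norm{z_{\ell_k}}+4\cn_w$, and you spell out the telescoping/Jensen patching that the paper leaves as standard.
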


\begin{proof}
We go back to the notation $\ep_{N,k},\ell^N_k$ of the proof of Proposition \ref{prop:contr-sys}, and set, for every $c\geq 0$,
$$N(c):=\max \lp \norm{\vp}_{\cb_{2+\ga}}^{1/2},(2c\norm{x}_\ga)^{1/\ga} \rp.$$
We have seen in the proof of Proposition \ref{prop:contr-sys} that there exists a constant $c_1$ such that for every $N\geq N(c_1)$ and every $k$, one has simultaneously
\begin{equation}\label{rela-base}
\ep_{N,k}^2 \norm{y_{l^N_k}}_{\cb_{2+\ga}} \leq 1 \quad , \quad \cn[y;\cacha^\ga([\ell^N_{k},\ell^N_{k+1}] ;\cb_{2+\ga})] \leq 2 c_1 \norm{x}_\ga+1.
\end{equation}
Suppose that $N \geq N(c_1)$ and set $\cn_w:=\cn[w;\cacha^\ga(\cb_{2+\ga})]$. One has, similarly to (\ref{esti-regula-int}),
\bean
\lefteqn{\cn[z;\cacha^\ga([\ell_k^N,\ell_{k+1}^N];\cb_{2+\ga})]}\\
&\leq &\cn_w+ c \norm{x}_\ga \lcl \cn[f'(y) \cdot z;\cac^0([\ell_k^N,\ell_{k+1}^N];\cb^m)]+\ep_{N,k}^\ga \cn[f'(y) \cdot z;\cac^\ga([\ell_k^N,\ell_{k+1}^N];\cb^m)] \rcl\\
&\leq & \cn_w+c \norm{x}_\ga \big\{ \cn[z;\cac^0([\ell_k^N,\ell_{k+1}^N];\cb)]+\ep_{N,k}^\ga \cn[z;\cac^\ga([\ell_k^N,\ell_{k+1}^N];\cb)]\\
& & \hspace{4cm}+\ep_{N,k}^\ga \cn[y;\cac^\ga([\ell_k^N,\ell_{k+1}^N];\cb)] \cn[z;\cac^0([\ell_k^N,\ell_{k+1}^N];\cb_\infty)] \big\}\\
& \leq & \cn_w+c_2 \norm{x}_\ga \cn[z;\cac^0([\ell_k^N,\ell_{k+1}^N];\cb_{2+\ga})] \lcl 1+\ep_{N,k}^\ga \cn[y;\cac^\ga([\ell_k^N,\ell_{k+1}^N];\cb)] \rcl\\
& &\hspace{6cm} +c_2 \norm{x}_\ga \ep_{N,k}^\ga \cn[z;\cacha^\ga([\ell_k^N,\ell_{k+1}^N];\cb_{2+\ga})],
\eean
where we have used (\ref{sobol-incl}) and (\ref{lien-der-delha-2}) to derive the last inequality. Therefore, if we choose $N_2\geq \max\lp N(c_1), (2c_2 \norm{x}_\ga)^{1/\ga} \rp$, one has, for any $N\geq N_2$ and any $k$,
\begin{multline*}
\cn[z;\cacha^\ga([\ell_k^N,\ell_{k+1}^N];\cb_{2+\ga})]  \\
\leq 2 \cn_w+2c_2 \norm{x}_\ga \cn[z;\cac^0([\ell_k^N,\ell_{k+1}^N];\cb_{2+\ga})] \lcl 1+\ep_{N,k}^\ga \cn[y;\cac^\ga([\ell_k^N,\ell_{k+1}^N];\cb)] \rcl.
\end{multline*}
Thanks to (\ref{lien-der-delha-2}) and (\ref{rela-base}), we know that
\bean
\cn[y;\cac^\ga([\ell_k^N,\ell_{k+1}^N];\cb)] &\leq & c\lcl \cn[y;\cacha^\ga([\ell_k^N,\ell_{k+1}^N];\cb_{2+\ga})]+\ep_{N,k}^{2} \norm{y_{\ell_n^N}}_{\cb_{2+\ga}}\rcl\\
&\leq & c \lcl 2c_1 \norm{x}_\ga+2 \rcl. 
\eean
As a consequence, there exists $c_3$ such that for any $N\geq N_2$,
\begin{equation}\label{estim-loc}
\cn[z;\cacha^\ga([\ell_k^N,\ell_{k+1}^N];\cb_{2+\ga})] \leq 2\cn_w+c_3 \norm{x}_\ga \cn[z;\cac^0([\ell_k^N,\ell_{k+1}^N];\cb_{2+\ga})] \lcl 1+\ep_{N,k}^\ga \norm{x}_\ga \rcl.
\end{equation}
Then, for any $N\geq N_2$,
\begin{multline*}
\cn[z;\cac^0([\ell_k^N,\ell_{k+1}^N];\cb_{2+\ga})] \\
\leq 2\cn_w+\norm{z_{\ell_k^N}}_{\cb_{2+\ga}}+c_3 \norm{x}_\ga \ep_{N,k}^\ga \cn[z;\cac^0([\ell_k^N,\ell_{k+1}^N];\cb_{2+\ga})] \lcl 1+\ep_{N,k}^\ga \norm{x}_\ga \rcl.
\end{multline*}
Pick now an integer $N_3 \geq N_2$ such that
$$c_3 N_3^{-\ga} \norm{x}_\ga \lcl 1+N_3^{-\ga} \norm{x}_\ga \rcl \leq \frac{1}{2},$$
and we get, for any $k$,
$$\cn[z;\cac^0([\ell_k^{N_3},\ell_{k+1}^{N_3}];\cb_{2+\ga})] \leq 2\norm{z_{\ell_k^{N_3}}}_{\cb_{2+\ga}}+4\cn_w,$$
so $\cn[z;\cac^0([0,1];\cb_{2+\ga})] \leq  2^{K(N_3)} \norm{w_0}_{\cb_{2+\ga}}+2^{K(N_3)+2}\cn_w$, where $K(N_3)$ stands for the smallest integer such that $\sum_{k=0}^{K(N_3)} \ep_{N_3,k} \geq 1$. As in the proof of Proposition \ref{prop:contr-sys}, one can check that $K(N_3)\leq c N_3$. In order to get (\ref{estim-eq-lin}), it suffices to observe that there exists a constant $c_4$ such that any integer $N_3\geq c_4 \max ( \norm{\vp}_{\cb_{2+\ga}}^{1/2},\norm{x}_\ga^{1/\ga} )$ meets the above requirements.

\smallskip

Suppose now that $w_t=S_t \psi$. In particular, $\cn_w=0$. Then we go back to (\ref{estim-loc}) to obtain, thanks to (\ref{estim-eq-lin}),
$$\cn[z;\cacha^\ga([\ell_k^{N_3},\ell_{k+1}^{N_3}];\cb_{2+\ga})] \leq C_1 \norm{\psi}_{\cb_{2+\ga}} N_3^\ga  \exp \lp C_2 \max \lp \norm{\vp}_{\cb_{2+\ga}}^{1/2},\norm{x}_\ga^{1/\ga} \rp \rp,$$
which entails
\bean
\cn[z;\cacha^\ga([0,1];\cb_{2+\ga})] &\leq & C_1 \norm{\psi}_{\cb_{2+\ga}} N_3^\ga K(N_3)^{1-\ga} \exp \lp C_2 \max \lp \norm{\vp}_{\cb_{2+\ga}}^{1/2},\norm{x}_\ga^{1/\ga} \rp \rp\\
&\leq & C_3 \norm{\psi}_{\cb_{2+\ga}} N_3 \exp \lp C_2 \max \lp \norm{\vp}_{\cb_{2+\ga}}^{1/2},\norm{x}_\ga^{1/\ga} \rp \rp,
\eean
and (\ref{estim-eq-lin-2}) is thus proved.

\end{proof}

\begin{remark}
For any $t_0 \in [0,1]$, the proof of Proposition \ref{prop:eq-lin-reg} can be easily adapted to the equation starting at time $t_0$
$$z_t=w_{t,t_0}+\int_{t_0}^t S_{t-u}(L(f_i'(y_u) \cdot z_u)) \, dx^i_u \quad , \quad w_{.,t_0} \in \cacha^\ga([t_0,1]; \cb_{2+\ga}) \ , \ t \in [t_0,1],$$
and both estimates (\ref{estim-eq-lin}) and (\ref{estim-eq-lin-2}) remain of course true in this situation.
\end{remark}

\subsection{Smoothness of the density}
Let us now go back to the fractional Brownian situation
\begin{equation}\label{eq-regu-fbm}
Y_t=S_t \vp+\int_0^t S_{t-u}(L(f_i(Y_u))) \, dB^i_u \quad , \quad t\in [0,1] \ , \ \vp \in \cb_{2+\ga},
\end{equation}
where $\ga \in (\frac{1}{2},H)$ is a fixed parameter. We suppose, for the rest of the section, that the initial condition $\vp$ is fixed in $\cb_{2+\ga}$ and that Hypothesis \ref{hyp:regularized-young-eq} is satisfied. We denote by $Y$ the solution of (\ref{eq-regu-fbm}) in $\cacha^\ga(\cb_{2+\ga})$ given by Proposition \ref{exi-cas-regu}.

\smallskip

As in Subsection \ref{subsec:exi-density}, we wish to study the law of $Y_t(\xi)$ for $t\in [0,1]$ and $\xi \in (0,1)$. Without loss of generality, we focus more exactly on the law of $Y_1(\xi)$, for $\xi\in (0,1)$.

\smallskip

The first thing to notice here is that the whole reasoning of Section \ref{sec:existence-density} can be transposed without any difficulty to Equation (\ref{eq-regu-fbm}), which is more easy to handle due to the regularizing effect of $L$. Together with the estimates (\ref{estim-poly}) and (\ref{estim-eq-lin}), this observation leads us to the following statement:

\begin{proposition}
For every $\xi \in (0,1)$, $Y_1(\xi) \in \mathbb{D}^\infty$ and the law of $Y_1(\xi)$ is absolutely continuous with respect to the Lebesgue measure.
\end{proposition}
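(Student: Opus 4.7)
The overall strategy splits into two independent verifications. First, I would establish the inclusion $Y_1(\xi) \in \mathbb{D}^{\infty}$ by deriving $L^p(\Omega)$-bounds of all orders on the iterated Malliavin derivatives. Then I would invoke Proposition \ref{prop:density-criterions}(i) and argue, as in the proof of Theorem \ref{theo-exi}, that $\|\mathcal{D}(Y_1(\xi))\|_{\ch} > 0$ almost surely.

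For the $\mathbb{D}^{\infty}$-inclusion, I would first observe that the pathwise analogues of Propositions \ref{prop:deriv-Phi} and \ref{prop:diff-ordre-supe} carry over verbatim to the regularized equation (\ref{eq-regu-fbm}): the regularization only makes the fixed-point arguments easier. Consequently, denoting by $\Phi^L$ the solution map for the regularized equation, for every $n \in \N^*$ the path $z^{(n)} := D^n \Phi^L(x)(h_1,\ldots,h_n)$ satisfies a linear equation of the form
\begin{equation*}
z^{(n)}_t = w^{(n)}_t + \int_0^t S_{t-u}\bigl(L(f_i'(Y_u) \cdot z^{(n)}_u)\bigr)\, dx^i_u,
\end{equation*}
where $w^{(n)} \in \cacha^\ga(\cb_{2+\ga})$ is built recursively from $D^k \Phi^L$ for $k \leq n-1$. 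Applying the exponential bound (\ref{estim-eq-lin}) inductively yields a pathwise estimate of the form
\begin{equation*}
\cn[z^{(n)};\cac^0(\cb_{2+\ga})] \leq P_n\bigl(\|x\|_\ga\bigr)\, \exp\!\bigl(C_n \max(\norm{\vp}_{\cb_{2+\ga}}^{1/2},\|x\|_\ga^{1/\ga})\bigr)
\end{equation*}
for polynomials $P_n$ and constants $C_n$ depending on $f,L,\ga,n$. Since $\ga > 1/2$ gives $1/\ga < 2$, Fernique's theorem applied to $\|B\|_\ga$ guarantees that $\exp(C \|B\|_\ga^{1/\ga})$ has finite moments of all orders, so each $z^{(n)}$ lies in $L^p(\Omega;\cb_{2+\ga})$ for every $p \geq 1$. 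The relation \eqref{eq:rel-malliavin-pathwise-deriv}, iterated $n$ times together with the continuity of $\crr_H : \ch \to \Omega$, translates these pathwise derivatives into $\ch^{\otimes n}$-valued Malliavin derivatives, yielding $Y_1(\xi) \in \mathbb{D}^{k,p}$ for every $k,p$ and hence $Y_1(\xi) \in \mathbb{D}^{\infty}$.

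For the absolute continuity, I would replicate verbatim the reasoning of Theorem \ref{theo-exi}: the representation (\ref{repres-diff}), applied to the regularized equation, reads
\begin{equation*}
D \Phi^L(B)(\crr_H h)_1(\xi) = \int_0^1 \Psi^i_{1,u}(\xi)\, d(\crr_H h)^i_u,
\end{equation*}
where now $\Psi^i_{1,u} = S_{1-u}(L(f_i(Y_u))) + \int_u^1 S_{1-w}(L(f_j'(Y_w) \cdot \Psi^i_{w,u}))\, dB^j_w$. If $\|\mathcal{D}(Y_1(\xi))\|_\ch = 0$, the continuity of $u \mapsto \Psi^i_{1,u}(\xi)$ (Lemma \ref{lem:flow-Psi}, adapted) forces $\Psi^i_{1,u}(\xi) = 0$ for all $u \in [0,1]$ and all $i$, and in particular at $u = 1$ this yields
\begin{equation*}
0 = \Psi^i_{1,1}(\xi) = L(f_i(Y_1))(\xi) = \int_0^1 U(\xi,\eta)\, f_i(Y_1(\eta))\, d\eta,\qquad i=1,\ldots,d.
\end{equation*}
Combined with the positivity of $U$ and the non-vanishing assumption imposed on the $f_i$'s (as in Theorem \ref{theo-exi}), this yields the desired contradiction, and Proposition \ref{prop:density-criterions}(i) concludes.

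The main technical obstacle is the iteration of (\ref{estim-eq-lin}) for higher-order derivatives: the forcing term $w^{(n)}$ involves products of lower-order derivatives $D^k \Phi^L$ together with derivatives $f_i^{(k)}(Y)$, so controlling its $\cacha^{0,\ga}(\cb_{2+\ga})$-norm requires carefully exploiting the algebra property (\ref{algebra}) and the regularizing nature of $L$ — but this is precisely where the choice $\la = 2+\ga$ in Section \ref{sec:estim-solution} pays off, since $\cb_{2+\ga}$ is stable under pointwise multiplication and composition with $\cac^{\infty,\textbf{b}}$ functions.
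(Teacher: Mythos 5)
Your treatment of the $\mathbb{D}^\infty$ part follows essentially the paper's own route: the $n$-th Fr\'echet derivatives of the flow of \eqref{eq-regu-fbm} solve linear equations of the type \eqref{eq-li-regul}, the forcing terms are controlled through \eqref{estim-poly} (and, inductively, through the bounds already obtained on lower-order derivatives), the exponential estimate \eqref{estim-eq-lin} is applied at each level, and integrability follows since $1/\ga<2$ (your Fernique remark, which the paper leaves implicit). The only gloss is the passage from pathwise differentiability plus $L^p$-bounds to actual membership in $\mathbb{D}^{k,p}$: iterating \eqref{eq:rel-malliavin-pathwise-deriv} a priori only gives local Sobolev membership, and one needs the localization lemma the paper cites (\cite[Lemma 4.1.2]{Nua}) to conclude; this is a small omission, not a change of method.

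The genuine gap is in your absolute-continuity step. After smearing by $L$, the endpoint value is $\Psi^i_{1,1}(\xi)=\int_0^1 U(\xi,\eta)\,f_i(Y_1(\eta))\,d\eta$, and the hypothesis of Theorem \ref{theo-exi} --- for every $\la\in\R$ there exists $i$ with $f_i(\la)\neq 0$ --- combined with the positivity of $U$ does \emph{not} yield a contradiction: each $f_i$ may change sign, and the weighted averages $\int_0^1 U(\xi,\eta) f_i(Y_1(\eta))\,d\eta$ can all vanish even though the $f_i$ have no common zero (think of $f_1,f_2$ behaving like $\sin$ and $\cos$). The nondegeneracy that actually works in this regularized setting is the one of Theorem \ref{thm:regu-density}, $f_i\geq \la_0>0$, which gives $\Psi^i_{1,1}(\xi)\geq c_U\la_0>0$ outright --- exactly the computation the paper performs in the proof of Theorem \ref{thm:regu-density} --- and then the continuity-in-$u$ argument is not even needed, the value at $u=1$ suffices. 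So you must import that hypothesis (or some condition on $L\circ f_i$ adapted to the smearing) rather than the pointwise non-vanishing of Theorem \ref{theo-exi}; as written, that step would fail. (To be fair, the paper states this proposition without an explicit nondegeneracy assumption and merely says ``following the lines of Section \ref{sec:existence-density}'', so the imprecision is partly inherited.)

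A secondary correction: the mechanism that makes the higher-order estimates tractable is not an algebra or composition property of $\cb_{2+\ga}$ (the composition estimate \eqref{compos} is only stated for $\al<1/2$); it is that all nonlinear operations are estimated in $\cb$, or in $\cac^\ga(\cb)$, and only then regularized into $\cb_{2+\ga}$ by $L$, as in the proofs of Propositions \ref{exi-cas-regu} and \ref{prop:eq-lin-reg}. The specific exponent $2+\ga$ is tied to the induction $\ep_k^2\norm{y_{\ell_k}}_{\cb_{2+\ga}}\leq 1$ in Proposition \ref{prop:contr-sys}, not to stability of $\cb_{2+\ga}$ under products.
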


\begin{proof}
The absolute continuity of the law of $Y_1(\xi)$ can be obtained by following the lines of Section \ref{sec:existence-density}, which gives $Y_1(\xi) \in \D^\infty_{{\rm loc}}$ as well. Then, like in Proposition \ref{prop:diff-ordre-supe}, observe that $n$-th (Fréchet) derivatives $Z^n$ of the flow associated with (\ref{eq-regu-fbm}) satisfy a linear equation of the form
$$Z^n_t=W^n_t+\int_0^t S_{t-u}\lp L(f_i'(Y_u) \cdot Z^n_u) \rp \, dB^i_u, \quad t\in [0,1].$$
The explicit expression for $W^n$ ($n\geq 1$) can be derived from the formulae contained in \cite[Proposition 5]{nua-sau}, and it is easy to realize that due to (\ref{estim-poly}), one has $\cn[W^n;\cacha^{0,\ga}(\cb_{2+\ga})] \in L^p(\oom)$ for any $n$ and any $p$. Then, thanks to (\ref{estim-eq-lin}), we deduce that $\cn[Z^n;\cac^0(\cb_{2+\ga})]$ is a square-integrable random variable, which allows us to conclude that $Y_1(\xi) \in \D^\infty$ (see \cite[Lemma 4.1.2]{Nua}).
\end{proof}

\smallskip

The following proposition, which can be seen as an improvement of Lemma \ref{lem:flow-Psi} (in this regularized situation), provides us with the key-estimate to prove the smoothness of the density:

\begin{proposition}\label{prop:flow-psi}
For every $s\in [0,1]$, consider the system of equations
\begin{equation}\label{defi-flow-psi}
\Psi^i_{t,s}=S_{t-s}(L(f_i(Y_s)))+\int_s^t S_{t-u}(L(f_j'(Y_u) \cdot \Psi^i_{u,s})) \, dB^j_u \quad , \quad t\in [s,1] \ , \ i\in \{1,\ldots ,m\}.
\end{equation}
Then, for every $i\in \{1,\ldots,m\}$ and every $t\in [0,1]$, $\Psi^i_{t,.} \in \cac^\ga([0,t];\cb_{2+\ga})$. In particular, for any $\xi \in (0,1)$, $\Psi^i_{t,.}(\xi) \in \cac^\ga([0,t])$. 

\smallskip

Moreover, one has the following estimate
\begin{equation}\label{est-fond}
\cn[\Psi^i_{t,.};\cac^\ga([0,t];\cb_{2+\ga})] \leq Q(\norm{\vp}_{\cb_{2+\ga}},\norm{B}_\ga) \cdot \exp \lp c \max \lp \norm{\vp}_{\cb_{2+\ga}}^{1/2}, \norm{B}_\ga^{1/\ga} \rp \rp,
\end{equation}
for some polynomial expression $Q$.
\end{proposition}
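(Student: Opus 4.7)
The strategy is to produce H\"older regularity in $s$ of $\Psi^i_{t,s}$ by exhibiting a linear equation satisfied by $s$-increments and then invoking the exponential-type estimates of Proposition \ref{prop:eq-lin-reg}. For fixed $s$, equation (\ref{defi-flow-psi}) is precisely of the form (\ref{eq-li-regul}) on $[s,1]$ with $w_r = S_{r-s}(L(f_i(Y_s)))$. Since $L:\cb\to\cb_{2+\ga}$ is bounded and $f_i$ is bounded, $\norm{L(f_i(Y_s))}_{\cb_{2+\ga}}$ is uniformly controlled, and (\ref{estim-eq-lin-2}) (applied with starting time $s$, as noted in the remark following Proposition \ref{prop:eq-lin-reg}) gives a uniform-in-$s$ bound on $\cn[\Psi^i_{\cdot,s};\cacha^\ga([s,1];\cb_{2+\ga})]$ of the polynomial-times-exponential form appearing in (\ref{est-fond}).

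For $0 \leq s < s' \leq 1$, the semigroup property $S_{t-s} = S_{t-s'}S_{s'-s}$ allows to rewrite, on $[s',1]$,
$$\Psi^i_{t,s} = S_{t-s'}\Psi^i_{s',s} + \int_{s'}^t S_{t-u}(L(f_j'(Y_u)\cdot\Psi^i_{u,s}))\,dB^j_u,$$
so that $\Gamma(t) := \Psi^i_{t,s'} - \Psi^i_{t,s}$ solves on $[s',1]$ the linear equation
$$\Gamma(t) = S_{t-s'}\psi + \int_{s'}^t S_{t-u}(L(f_j'(Y_u)\cdot\Gamma(u)))\,dB^j_u, \qquad \psi := \Psi^i_{s',s'} - \Psi^i_{s',s}.$$
Since $w_t = S_{t-s'}\psi$ has zero $\cacha^\ga$-seminorm and supremum-norm bounded by $\norm{\psi}_{\cb_{2+\ga}}$, estimate (\ref{estim-eq-lin}) gives $\sup_{t\in[s',1]}\norm{\Gamma(t)}_{\cb_{2+\ga}} \leq C_1 \norm{\psi}_{\cb_{2+\ga}}\exp\bigl(C_2\max(\norm{\vp}_{\cb_{2+\ga}}^{1/2},\norm{B}_\ga^{1/\ga})\bigr)$.

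The remaining task, which is the heart of the argument, is to bound $\norm{\psi}_{\cb_{2+\ga}}$ by $c\,|s'-s|^\ga$ times the desired polynomial-exponential factor. Writing
$$\psi = L\bigl(f_i(Y_{s'}) - f_i(Y_s)\bigr) + (\Id - S_{s'-s})L(f_i(Y_s)) - \int_s^{s'} S_{s'-u}(L(f_j'(Y_u)\cdot\Psi^i_{u,s}))\,dB^j_u,$$
the first term is dominated by $c\,\norm{Y_{s'}-Y_s}_\cb \leq c\,|s'-s|^\ga \cn[Y;\cac^\ga(\cb)]$, and $\cn[Y;\cac^\ga(\cb)]$ is in turn controlled by (\ref{lien-der-delha-2}) combined with Proposition \ref{prop:contr-sys}. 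For the second, the commutation of $S_t$ with $\Delta$ upgrades (\ref{regu-hold-semi}) to $\norm{(\Id-S_{s'-s})\varphi}_{\cb_{2+\ga}} \leq c\,|s'-s|^\ga\norm{\varphi}_{\cb_{2+2\ga}}$, and Hypothesis \ref{hyp:regularized-young-eq} gives $L:\cb\to\cb_{2+2\ga}$ continuously, hence $\norm{L(f_i(Y_s))}_{\cb_{2+2\ga}}$ is uniformly bounded. For the third, apply the Young convolutional estimate (\ref{esti-yg}) on the short interval $[s,s']$ with $\la = 2+\ga$: since the integrand vanishes at $u = s$, the endpoint value is bounded by $(s'-s)^\ga$ times the $\cacha^\ga$-seminorm on $[s,s']$, which is in turn controlled by $\norm{B}_\ga$ and by the $\cac^0$ and $\cac^\ga$ norms of $L(f'_j(Y)\cdot\Psi^i_{\cdot,s})$; these last norms are bounded by the uniform estimate from the first paragraph. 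Combining these three contributions, $\norm{\psi}_{\cb_{2+\ga}}$ satisfies the desired $|s'-s|^\ga$ bound, and substituting into the $\Gamma$-estimate yields both $\Psi^i_{t,\cdot} \in \cac^\ga([0,t];\cb_{2+\ga})$ and the quantitative bound (\ref{est-fond}). The main obstacle is this last step: the three contributions to $\psi$ must be estimated so that the constants assemble cleanly into the polynomial-times-exponential form of (\ref{est-fond}), in particular ensuring that the Young integral on $[s,s']$ genuinely produces a $|s'-s|^\ga$ factor while its coefficients are already controlled only through the nontrivial exponential estimate of Proposition \ref{prop:eq-lin-reg}.
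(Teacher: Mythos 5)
Your proposal is correct and follows essentially the same route as the paper: the $s$-increment $\Gamma(t)=\Psi^i_{t,s'}-\Psi^i_{t,s}$ is viewed as the solution of the linear equation \eqref{eq-li-regul} with data $S_{t-s'}\psi$, $\psi=\Psi^i_{s',s'}-\Psi^i_{s',s}$, controlled through \eqref{estim-eq-lin}, and $\psi$ is split into exactly the same three terms (composition increment of $Y$, semigroup increment using $L:\cb\to\cb_{2+2\ga}$, and the Young integral over $[s,s']$), each bounded by $|s'-s|^\ga$ times quantities controlled by \eqref{estim-poly}, \eqref{estim-eq-lin} and \eqref{estim-eq-lin-2}. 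This matches the paper's proof both in structure and in the way the polynomial-times-exponential bound \eqref{est-fond} is assembled.
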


\begin{proof}
As in the proof of Lemma \ref{lem:flow-Psi}, we introduce the path
$$\Gamma^i_{v,u}(s):=\Psi^i_{s,v}-\Psi^i_{s,u} \quad , \quad s\in [v,1] \ , \ 0\leq u <v \leq t,$$
and it is readily checked that $\Gamma^i_{v,u}$ solves the equation on $[v,1]$
$$\Gamma^i_{v,u}(s)=S_{s-v}(\Psi^i_{v,v}-\Psi^i_{v,u})+\int_v^s S_{s-w}(L(f_j'(Y_w) \cdot \Gamma^i_{v,u}(w))) \, dB^j_w.$$
Therefore, thanks to the estimate (\ref{estim-eq-lin}), we get
\begin{multline}\label{hold}
\norm{\Psi^i_{t,v}-\Psi^i_{t,u}}_{\cb_{2+\ga}}=\norm{\Gamma^i_{v,u}(t)}_{\cb_{2+\ga}}\leq  \cn[\Gamma^i_{v,u};\cac^0([v,1];\cb_{2+\ga})]\\
\leq c \norm{\Psi^i_{v,v}-\Psi^i_{v,u}}_{\cb_{2+\ga}} \exp \lp c \max \lp \norm{\vp}_{\cb_{2+\ga}}^{1/2}, \norm{B}_\ga^{1/\ga} \rp \rp.
\end{multline}
Then, by writing
$$\Psi^i_{v,v}-\Psi^i_{v,u}=L(f_i(Y_v)-f_i(Y_u))-\lc S_{v-u} -\id \rc(L(f_i(Y_u)))-\int_u^v S_{v-w}(L(f_j'(Y_w) \cdot \Psi^i_{w,u})) \, dB^j_w,$$
we deduce that
\begin{multline*}
\norm{\Psi^i_{v,v}-\Psi^i_{v,u}}_{\cb_{2+\ga}} \leq c \lln v-u \rrn^\ga \big\{ \norm{L}_{\cl(\cb,\cb_{2+\ga})} \cn[Y;\cac^\ga(\cb)]+\norm{L}_{\cl(\cb,\cb_{2+2\ga})}\\
+\norm{L}_{\cl(\cb,\cb_{2+\ga})}\norm{B}_\ga  \big( \cn[f'(Y) \cdot \Psi^i_{.,u};\cac^0(\cb^m)]+\cn[f'(Y) \cdot \Psi^i_{.,u};\cac^\ga(\cb^m)] \big)\big\}\\
\leq c \lln v-u\rrn^\ga \lcl 1+\norm{B}_\ga\rcl \lcl 1+\cn[Y;\cac^\ga(\cb)]\rcl \lcl 1+\cn[\Psi^i_{.,u};\cac^0(\cb_{2+\ga})]+\cn[\Psi^i_{.,u};\cac^\ga(\cb)] \rcl.
\end{multline*}
Going back to (\ref{hold}), the result now easily follows from the embedding $\cacha^{0,\ga}(\cb_{2+\ga}) \subset \cac^\ga(\cb)$ and the three controls (\ref{estim-poly}), (\ref{estim-eq-lin}) and (\ref{estim-eq-lin-2}).

\end{proof}

Proposition \ref{prop:flow-psi} implies in particular that the Young integral $\int_0^t \Psi^i_{t,u}(\xi) \, dh^i_u$ is well-defined for every $h\in \cac^\ga$, $t\in [0,1]$ and $\xi \in (0,1)$. We are thus in a position to apply the Fubini-type argument of \cite[Propositions 4 and 7]{nua-sau} so as to retrieve the following convenient expression for the Malliavin derivative:

\begin{corollary}
For every $\xi \in (0,1)$, the Malliavin derivative of $Y_1(\xi)$ is given by
\begin{equation}\label{expr-der-mal}
\cd_s^i(Y_1(\xi))=\Psi^i_{1,s}(\xi) \quad , \quad s\in [0,1] \ , \ i\in \{1,\ldots,m\},
\end{equation}
where $\Psi^i_{.,s}$ stands for the solution of (\ref{defi-flow-psi}) on $[s,1]$.
\end{corollary}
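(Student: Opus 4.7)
The plan is to combine the pathwise differentiability of $\Phi$ provided by Proposition \ref{prop:deriv-Phi} with the Malliavin--pathwise identification \eqref{ident-der-mal}, and then to convert the resulting pathwise expression into a genuine inner product in $\ch$ by means of a Fubini-type manipulation, following the template of \cite[Propositions 4 and 7]{nua-sau}.

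The starting point is that, for any $h\in\ch$, identity \eqref{ident-der-mal} yields
$$\lla \cd(Y_1(\xi)), h\rra_\ch = D\Phi(B)(\crr_H h)_1(\xi).$$
Since $\crr_H h$ is $H$-H\"older (and in particular $\ga$-H\"older for the fixed $\ga\in(1/2,H)$), the first step is to extend the representation \eqref{repres-diff} — originally derived for smooth directions — by density and continuity of the Young convolutional integral. Here Proposition \ref{prop:flow-psi} is used in an essential way: the $\ga$-H\"older regularity of $u\mapsto \Psi^i_{1,u}(\xi)$ together with $2\ga>1$ makes $\int_0^1 \Psi^i_{1,u}(\xi)\, d(\crr_H h)^i_u$ well-defined as a Young integral, so that
$$\lla \cd(Y_1(\xi)), h\rra_\ch = \int_0^1 \Psi^i_{1,u}(\xi)\, d(\crr_H h)^i_u$$
holds for every $h\in\ch$.

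The second step is to recognise the right-hand side as an inner product in $\ch$. Plugging in the Volterra representation $(\crr_H h)(u) = \int_0^u K(u,s)[K_H^*h](s)\,ds$ and swapping the order of integration — this is the Fubini-type argument borrowed from \cite[Propositions 4, 7]{nua-sau} — I would obtain
$$\int_0^1 \Psi^i_{1,u}(\xi)\, d(\crr_H h)^i_u \;=\; \int_0^1 [K_H^*(\Psi^i_{1,\cdot}(\xi))](s)\cdot [K_H^*h]^i(s)\,ds \;=\; \lla \Psi^{\,\cdot}_{1,\cdot}(\xi),\, h\rra_\ch,$$
where $\Psi^{\,\cdot}_{1,\cdot}(\xi) := \sum_{i=1}^d \Psi^i_{1,\cdot}(\xi)\,e_i$ is viewed as an element of $\ch$. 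Since smooth $h$'s are dense in $\ch$, comparing the two expressions for $\lla \cd(Y_1(\xi)), h\rra_\ch$ identifies the Malliavin derivative with $\Psi^{\,\cdot}_{1,\cdot}(\xi)$, which is exactly \eqref{expr-der-mal}.

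The main obstacle is the rigorous justification of the Fubini exchange, which has to cope with the singularity of the Volterra kernel $K(u,s)$ along the diagonal. The quantitative control \eqref{est-fond} of Proposition \ref{prop:flow-psi} — i.e., the combination of H\"older regularity of $\Psi^i_{1,\cdot}(\xi)$ with the exponential estimate in $\norm{B}_\ga$ and $\norm{\vp}_{\cb_{2+\ga}}$ — is precisely what allows one to reduce the present parabolic situation to the ODE analysis of \cite{nua-sau}, since all the relevant quantities then belong to the appropriate $L^p(\oom)$ spaces and the interchange of integrations can be carried out by a classical approximation procedure.
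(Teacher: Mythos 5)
Your proposal is correct and follows essentially the same route as the paper: the paper's own justification consists precisely in invoking Proposition \ref{prop:flow-psi} to make the Young integral $\int_0^1 \Psi^i_{1,u}(\xi)\,dh^i_u$ well-defined for $\ga$-H\"older directions and then applying the Fubini-type argument of \cite[Propositions 4 and 7]{nua-sau} to identify $\cd^i_s(Y_1(\xi))$ with $\Psi^i_{1,s}(\xi)$, which is exactly the two-step scheme (extension of \eqref{repres-diff} to $\crr_H h$, then rewriting the pathwise expression as an $\ch$-inner product via $K_H^*$) that you spell out.
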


\begin{theorem}\label{thm:regu-density}
Suppose that there exists $\la_0 >0$ such that for every $i\in \{1,\ldots,m\}$ and every $\eta \in \R$, $f_i(\eta) \geq \la_0$. Then, for every $\xi \in (0,1)$, the density of $Y_1(\xi)$ with respect to the Lebesgue measure is infinitely differentiable.
\end{theorem}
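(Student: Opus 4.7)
The plan is to apply part (ii) of Proposition \ref{prop:density-criterions}. Since the preceding proposition already gives $Y_1(\xi)\in \D^{\infty}$, the only thing left is to show that $\expect\!\left[\|\cD Y_1(\xi)\|_{\ch}^{-p}\right]<\infty$ for every $p\ge 1$. I would pick any index $i\in\{1,\ldots,d\}$ and work with the cruder lower bound $\|\cD Y_1(\xi)\|_{\ch}\ge \|\Psi^i_{1,\cdot}(\xi)\|_{\ch_1}$, where $\ch_1$ is the one-dimensional version of $\ch$ and we use formula \eqref{expr-der-mal}.

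The first key step is a simple deterministic lower bound on $\|\cdot\|_{\ch_1}$. Exploiting the identity
\[
\|g\|_{\ch_1}^2 = H(2H-1)\int_0^1\!\int_0^1 g_s\,g_t\,|s-t|^{2H-2}\,ds\,dt
\]
valid for $H>1/2$, one checks that whenever $g_s\ge c_0>0$ on a subinterval $[1-\delta,1]$, restricting the integration yields $\|g\|_{\ch_1}\ge c\,c_0\,\delta^{H}$. The second step is to produce such a $\delta$ and $c_0$ for $g_s=\Psi^i_{1,s}(\xi)$. The endpoint value is positive by construction: Hypothesis \ref{hyp:regularized-young-eq} together with $f_i\ge \lambda_0$ gives
\[
\Psi^i_{1,1}(\xi)=L(f_i(Y_1))(\xi)=\int_0^1 U(\xi,\eta)\,f_i(Y_1(\eta))\,d\eta\,\ge\, c_U\lambda_0.
\]
Combining the H\"older estimate \eqref{est-fond} with the Sobolev embedding $\cb_{2+\ga}\subset\cb_\infty$ yields $|\Psi^i_{1,s}(\xi)-\Psi^i_{1,1}(\xi)|\le c\,M\,(1-s)^\ga$, where $M:=\cn[\Psi^i_{1,\cdot};\cac^\ga([0,1];\cb_{2+\ga})]$. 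Hence, taking $\delta:=\min(1,(c_U\lambda_0/(2cM))^{1/\ga})$, one has $\Psi^i_{1,s}(\xi)\ge c_U\lambda_0/2$ on $[1-\delta,1]$, so that altogether
\[
\|\cD Y_1(\xi)\|_{\ch}\;\ge\; c\,(c_U\lambda_0)^{1+H/\ga}\,M^{-H/\ga}\wedge c\,c_U\lambda_0.
\]

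The last step is to control the negative moments of this expression, which reduces to showing $\expect[M^q]<\infty$ for every $q\ge 1$. The estimate \eqref{est-fond} from Proposition \ref{prop:flow-psi} bounds $M$ by a polynomial in $\|\vp\|_{\cb_{2+\ga}}$ and $\|B\|_\ga$ times $\exp\!\big(c\max(\|\vp\|_{\cb_{2+\ga}}^{1/2},\|B\|_\ga^{1/\ga})\big)$. Since $\ga>1/2$ we have $1/\ga<2$, and Fernique's theorem applied to the Gaussian seminorm $\|B\|_\ga$ provides $\expect[\exp(\alpha\|B\|_\ga^2)]<\infty$ for small enough $\alpha>0$, so all desired moments of $M$ are finite. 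This gives $\expect[\|\cD Y_1(\xi)\|_{\ch}^{-p}]<\infty$ for all $p$, and Proposition \ref{prop:density-criterions}(ii) yields the smoothness of the density.

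The main obstacle I expect is the deterministic lower bound in Step~2: although the idea of using $\Psi^i_{1,1}(\xi)>0$ plus H\"older continuity on a small interval is natural, the exponent $H/\ga$ that appears in the lower bound must be tracked carefully so that the exponential-in-$\|B\|_\ga^{1/\ga}$ growth of $M$ does not destroy integrability; this is precisely where having coefficients bounded \emph{uniformly} from below by $\lambda_0$ (rather than merely non-vanishing, as in Theorem~\ref{theo-exi}) and having the regularizing kernel $U$ with $c_U>0$ become essential.
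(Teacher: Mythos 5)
Your overall plan is the paper's: use Proposition \ref{prop:density-criterions}(ii), exploit the endpoint identity $\Psi^i_{1,1}(\xi)=L(f_i(Y_1))(\xi)=\int_0^1 U(\xi,\eta)f_i(Y_1(\eta))\,d\eta\ge c_U\la_0$, and control the H\"older norm $M=\cn[\Psi^i_{1,\cdot};\cac^\ga([0,1];\cb_{2+\ga})]$ in all $L^q(\oom)$ via \eqref{est-fond} and Fernique (here $1/\ga<2$ is indeed the point). The genuine gap is in your deterministic lower bound. From $\|g\|_{\ch}^2=H(2H-1)\iint_{[0,1]^2} g_sg_t|s-t|^{2H-2}\,ds\,dt$ you cannot conclude $\|g\|_{\ch}\ge c\,c_0\,\delta^H$ by ``restricting the integration'' to $[1-\delta,1]^2$: the integrand is pointwise nonnegative only there, whereas on the cross region $[0,1-\delta]\times[1-\delta,1]$ the sign of $g_s=\Psi^i_{1,s}(\xi)$ is uncontrolled (the linear equation \eqref{defi-flow-psi} yields no sign information away from $s=1$), so the discarded contribution may be negative and positive definiteness of the form only gives $\|g\|_\ch^2\ge 0$, not the restricted block as a lower bound. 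The inequality you want is nevertheless true, but its proof must go through the Volterra operator rather than the double integral: since $K^*_H$ is an isometry and, by \eqref{eq:def-K-star}, $[K_H^*g]_t=\int_t^1 g_r\,\partial_rK(r,t)\,dr$ with $\partial_rK(r,t)>0$, the value $[K_H^*g]_t$ only sees $g$ on $[t,1]$; hence for $t\in[1-\delta,1]$ one gets $[K_H^*g]_t\ge c_0K(1,t)\ge c\,c_0(1-t)^{H-1/2}$ and $\|g\|_\ch^2\ge\int_{1-\delta}^1([K_H^*g]_t)^2\,dt\ge c\,c_0^2\delta^{2H}$. This ``forward-looking'' positivity of $K^*_H$ is precisely the content of the Baudoin--Hairer estimate that the paper invokes as \eqref{estim-bau-hai}, so the clean repair is either to insert this $K^*_H$ computation or simply to quote their Corollary 4.5, as the paper does.

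Once that lemma is fixed, the rest of your argument is sound and differs from the paper only in packaging: the paper splits the small-ball probability $\bp(\|\cd(Y_1(\xi))\|_\ch<\ep)$ via \eqref{estim-bau-hai}, kills the sup-norm term with the same bound $\Psi^i_{1,1}(\xi)\ge c_U\la_0$, and treats the H\"older-norm term by Markov's inequality together with \eqref{est-fond}; you instead derive the pathwise bound $\|\cd(Y_1(\xi))\|_\ch\ge c\,(c_U\la_0)^{1+H/\ga}M^{-H/\ga}\wedge c\,c_U\la_0$ and estimate negative moments directly, which requires exactly the same ingredients ($f_i\ge\la_0$, $c_U>0$, all moments of $M$). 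Your tracking of the exponent $H/\ga$ and of why uniform positivity of $f_i$ is needed (rather than the mere non-vanishing of Theorem \ref{theo-exi}) is correct.
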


\begin{proof}
We shall apply here the criterion stated at Proposition \ref{prop:density-criterions} item (ii). Notice that we already know that $Y_1(\xi) \in \mathbb{D}^\infty$, so it remains to show that for every $p\geq 2$, there exists $\ep_0(p) >0$ such that if $\ep < \ep_0(p)$, then $P\lp \norm{\cd_.(Y_1(\xi))}_{\ch} < \ep \rp \leq \ep^p$.

\smallskip

To this end, we resort to the following practical estimate, borrowed from \cite[Corollary 4.5]{bau-hai}: for every $\beta >H-1/2$, there exist $\al >0$ such that
\begin{equation}\label{estim-bau-hai}
\bp\lp \norm{\cd_.(Y_1(\xi))}_{\ch} < \ep \rp \leq \bp\lp \norm{\cd_.(Y_1(\xi))}_{\infty} < \ep^\al \rp
+\bp\lp \norm{\cd_.(Y_1(\xi))}_{\beta} > \ep^{-\al} \rp.
\end{equation}

The first term in the right-hand-side of (\ref{estim-bau-hai}) is easy to handle. Indeed, owing to the expression (\ref{expr-der-mal}) for the Malliavin derivative of $Y_1(\xi)$, one has
\bean
\norm{\cd_.(Y_1(\xi))}_\infty \ \geq \ \inf_{i=1,\ldots,m} | \Psi^i_{1,1}(\xi)|&=&\inf_{i=1,\ldots,m} | L(f_i(Y_1))(\xi) | \\
&=& \inf_{i=1,\ldots,m} \lln \int_0^1 d\eta \, U(\xi,\eta) f_i(Y_1(\eta)) \rrn \ \geq \ c_U \la_0 >0
\eean
(remember that $U$ and $c_U$ have been defined in Hypothesis \ref{hyp:regularized-young-eq}), so that $\bp( \norm{\cd_.(Y_1(\xi))}_{\infty} < \ep^\al )=0$ for $\ep$ small enough.

\smallskip

Then, in order to cope with $\bp\lp \norm{\cd_.(Y_1(\xi))}_{\beta} > \ep^{-\al} \rp$, one can simply rely on the Markov inequality, since, according to (\ref{est-fond}),
\bean
\norm{\cd_.(Y_1(\xi))}_\beta  \ = \ \norm{\Psi_{1,.}(\xi)}_\beta &\leq & c \sup_{i\in \{1,\ldots,m\}} \cn[\Psi^i_{1,.};\cac^\ga([0,1];\cb_{2+\ga})]\\
&\leq & c \, Q\lp \norm{\vp}_{\cb_{2+\ga}},\norm{B}_\ga \rp \cdot \exp\lp c \max \lp \norm{\vp}_{\cb_{2+\ga}},\norm{B}_\ga^{1/\ga} \rp \rp,
\eean
which proves that $\norm{\cd_.(Y_1(\xi))}_\beta \in L^q(\Omega)$ for every $q\geq 1$.

\end{proof}

\bibliography{mabiblio-rhe-regu-submitted}{}
\bibliographystyle{plain}

\end{document}